\newcommand\arXiv[1]{\href{http://arxiv.org/abs/#1}{\nolinkurl{arXiv:#1}}}
\newcommand\MRnumber[1]{\href{http://www.ams.org/mathscinet-getitem?mr=#1}{\nolinkurl{MR#1}}}
\newcommand\DOI[1]{\href{http://dx.doi.org/#1}{\nolinkurl{DOI:#1}}}
\newcommand\MAILTO[1]{\href{mailto:#1}{\nolinkurl{#1}}}
\newtheorem{theorem}{Theorem}
\newtheorem{dummy}{Dummy}[section]
\newtheorem{corollary}[dummy]{Corollary}
\newtheorem{definition}[dummy]{Definition}
\newtheorem{lemma}[dummy]{Lemma}
\newtheorem{proposition}[dummy]{Proposition}
\theoremstyle{definition}
\newtheorem{remarknodiamond}[dummy]{Remark}
\newenvironment{remark}{\begin{remarknodiamond}}{\hfill\ensuremath\Diamond\end{remarknodiamond}}
\renewcommand\mathbb\mathds
\newcommand\bC{\mathbb C}
\newcommand\bG{\mathbb G}
\newcommand\bH{\mathbb H}
\newcommand\bN{\mathbb N}
\newcommand\bR{\mathbb R}
\newcommand\bZ{\mathbb Z}
\newcommand\cA{\mathcal A}
\newcommand\cB{\mathcal B}
\newcommand\cC{\mathcal C}
\newcommand\cF{\mathcal F}
\newcommand\cM{\mathcal M}
\newcommand\cQ{\mathcal Q}
\newcommand\cS{\mathcal S}
\newcommand\cW{\mathcal W}
\newcommand\cX{\mathcal X}
\newcommand\cY{\mathcal Y}
\newcommand\cZ{\mathcal Z}
\newcommand\rB{\mathrm B}
\DeclareMathOperator\homology{H}
\renewcommand\H{\homology}
\renewcommand\d{\mathrm d}
\newcommand\fd{{\mathrm{fd}}}
\newcommand\bk{{\mathbb{k}}}
\newcommand\longto\longrightarrow
\newcommand\mono\hookrightarrow
\newcommand\epi\twoheadrightarrow
\newcommand\isom{\overset\sim\to}
\newcommand\<\langle
\renewcommand\>\rangle
\newcommand\sminus\smallsetminus
\newcommand\condense{\mathrel{\,\hspace{.75ex}\joinrel\rhook\joinrel\hspace{-.75ex}\joinrel\rightarrow}}
\tikzset{
    dot/.style={circle,draw,fill,inner sep=1pt},
}
\newcommand\Mod{\cat{Mod}}
\DeclareMathOperator\SH{SH}
\newcommand\op{\mathrm{op}}
\newcommand\id{\mathrm{id}}
\DeclareMathOperator\Sq{Sq}
\newcommand\Vect{\cat{Vec}}
\newcommand\SVec{\cat{SVec}} \newcommand\SVect\SVec
\DeclareMathOperator\Spec{Spec}
\DeclareMathOperator\Kar{Kar}
\DeclareMathOperator\End{End}
\DeclareMathOperator\Gal{Gal}
\DeclareMathOperator\Cliff{Cliff}
\DeclareMathOperator\Mor{Mor}
\DeclareMathOperator\ob{ob}
\DeclareMathOperator\Mat{Mat}
\newcommand\PGL{\mathrm{PGL}}
\newcommand\pt{\mathrm{pt}}
\newcommand\define[1]{\emph{#1}}
\newcommand\cat[1]{\mathbf{#1}}
\begin{document}
\title{On the classification of topological orders}
\author{Theo Johnson-Freyd}
\thanks{I thank D.~Freed, D.~Gaiotto, L.~Kong, D.~Reutter, and M.~Yu, and in particular 
the anonymous referee, for discussion and comments.
I thank C.~Scheimbauer for  Remark~\ref{remark.bestiary}, and D.~Nykshych for the end of Remark~\ref{remark.fermioniocWittgroup}.
Research at Perimeter Institute is supported by the Government of Canada through the Department of Innovation, Science and Economic Development Canada and by the Province of Ontario through the Ministry of Colleges and Universities.}
\affiliation{Perimeter Institute for Theoretical Physics}
\affiliation{Department of Mathematics, Dalhousie University}

\begin{abstract}
\textbf{Abstract.} 
We axiomatize the extended operators in topological orders (possibly gravitationally anomalous, possibly with degenerate ground states) in terms of monoidal Karoubi-complete $n$-categories which are mildly dualizable and have trivial centre. Dualizability encodes the word ``topological,'' and we take it as the definition of ``(separable) multifusion $n$-category''; triviality of the centre implements the physical principle of ``remote detectability.'' We show that such $n$-categorical algebras are Morita-invertible (in the appropriate higher Morita category), thereby identifying topological orders with anomalous fully-extended TQFTs. We identify centreless fusion $n$-categories (i.e.\ multifusion $n$-categories with indecomposable unit) with centreless braided fusion $(n{-}1)$-categories.
We then discuss the classification in low spacetime dimension, proving in particular that all $(1{+}1)$- and $(3{+}1)$-dimensional topological orders, with arbitrary symmetry enhancement, are suitably-generalized topological sigma models. These mathematical results confirm and extend a series of conjectures and results by L.~Kong, X.G.~Wen, and their collaborators.
\end{abstract}

\maketitle

\tableofcontents

\section{Introduction and summary of results}

The classification of gapped topological phases of matter remains of fundamental interest \cite{Nobel}.
The goal of this article is to give an \emph{a priori} analysis of the algebras of operators in such phases. 
Necessary for such an analysis is to propose axioms/definitions for the physical objects of study. 
Our definition builds on the proposals of L.\ Kong, X.G.~Wen, and their collaborators \cite{MR2942952,1405.5858,10.1093/nsr/nwv077,KWZ1,KWZ2,PhysRevX.8.021074,PhysRevX.9.021005,PhysRevB.100.045105,1912.13492,KLWZZ}:
\begin{definition}\label{defn.main}
  An \define{$(n{+}1)$-dimensional topological order} is a multifusion $n$-category with trivial centre.
\end{definition}
Our topological orders correspond to the ``closed'' topological orders of \cite{1405.5858,KWZ1,KWZ2}. Those papers sometimes identify the words ``closed'' and ``nonanomalous,'' but we will see (Theorem~\ref{thm.anomalousTQFT}) that the topological orders of Definition~\ref{defn.main} do sometimes have (gravitational 't~Hooft) anomalies.
(Those papers contemplate a ``cochain complex'' whose ``$n$-cochains'' are multifusion $n$-categories, and whose differential takes a cochain to its centre. Anomalies in the sense of \cite{MR3165462} correspond to the failure of a topological order to be ``exact.'')  In $(3{+}1)$-dimensions there are known gapped quantum systems which are not topological \cite{PhysRevLett.94.040402,PhysRevA.83.042330}; such phases are not part of our analysis, and will not be mentioned further, but one can hope that some of the ideas in this article might apply to those phases as well.

We do not propose a mathematical definition of the term ``(gapped topological) phase of matter,'' but rather adopt the name ``topological order'' from \cite{MR1043302}, where it is introduced without axiomatization. Indeed, topological orders as axiomatized by Definition~\ref{defn.main} are \emph{not} equivalent to (gapped topological) phases of matter, but rather correspond to equivalence classes of such phases under the relation of stacking with invertible phases.
Physically, the multifusion $n$-category contains and organizes all of the operators in the topological order: its objects are the operators of codimension-$1$, its $1$-morphisms are the interfaces (of codimension-$2$ in spacetime) between codimension-$1$ operators, and so on.
The higher categorical nature of (extended) operators is well established \cite{MR2827874}.
  Definition~\ref{defn.main} asserts that a topological order is fully determined by its ``algebra'' of operators (and that this ``algebra'' satisfies certain axioms). Depending on one's ontology, this Definition either defines an interesting equivalence class of mathematical/physical systems to study, or it is a physical assertion which may or may not be true for ``physical topological orders.''

The articles cited above do not provide a mathematically precise definition of (multi)fusion $n$-category. 
(Multi)fusion $2$-categories are defined in \cite{Reutter2018}.
Providing  a definition when $n>2$ is the first contribution of this paper, and is the main focus of \S\ref{sec.dualizability}; actually, we only propose a definition of \emph{separable} (multi)fusion $n$-category, but in characteristic $0$ all fusion $1$-categories are separable \cite{DSPS}, so  perhaps separability is part of what a physicist means by ``fusion category.'' 
(Or maybe not: the definition in \cite{Reutter2018} is stated only over $\bC$, but seems to compile over any field and does not seem to impose separability.)
 In order to formulate the definition, we rely heavily on constructions and results from \cite{GJFcond}, which, under the name ``categorical condensation,'' generalizes Karoubi (aka idempotent) completion to higher categories; we review the main definitions in \S\ref{subsec.condensation}. \S\ref{subsec.mor1} and \S\ref{subsec.dualizability} provide the machinery needed for our definition: a \define{multifusion $n$-category} is a ($\bC$-linear and additive) Karoubi-complete monoidal $n$-category satisfying a mild dualizability criterion in the higher Morita category $\cat{Mor}_1(n\cat{KarCat})$ of such objects. Our first main result, Theorem~\ref{thm.dualizability}, says that this mild dualizability criterion in fact implies a condition called ``full dualizability.'' Full dualizability is a strong finiteness condition which has appeared most prominently in the study of fully extended topological quantum field theories \cite{BaeDol95,Lur09}, and we will use mathematical results from that subject, but we do not assume any \emph{a priori} identification between topological orders and TQFTs.

General multifusion $n$-categories do not determine topological orders. Building on \cite{MR2942952,PhysRevX.3.021009},  \cite{1405.5858} emphasizes the importance of the ``principle of remote detectability,'' which roughly speaking asserts that there are no ``invisible'' operators. The way operators ``see'' each other is through a higher-categorical version of the commutator, hence our requirement that the multifusion $n$-category should have trivial centre. The requirement is familiar in low dimensions: for example, in $2{+}1$ dimensions, it asserts that the braided fusion category of line operators is nondegenerate (which together with a ribbon structure defines the notion of \define{modular} tensor category). Our second main result, Theorem~\ref{thm.invertibility} in \S\ref{sec.centre}, says that multifusion $n$-categories with trivial centre are automatically \define{Morita invertible}. 
Theorem~\ref{thm.anomalousTQFT} is an immediate consequence: topological orders are equivalent to \define{anomalous fully-extended TQFTs} in the sense of~\cite{MR3165462},  confirming mathematically the main conjecture of~\cite{1405.5858}.

We return to a physical discussion in \S\ref{sec.justification}. In \S\ref{subsec.correct}
we list carefully the ans\"atze leading to our proposed definition of ``topological order,'' and comment on potential challenges each ansatz faces: maybe ``physical'' topological orders are simply more subtle than straightforward category theory predicts. Moreover, there is an important difference between the topological orders as axiomatized in this paper and other notions of ``topological phases of matter.'' Namely, \define{invertible} topological phases all produce the trivial topological order, and so our topological orders at best describe some quotient $\{\text{topological phases}\}/\{\text{invertible phases}\}$. Indeed, our classification is only possible because of this quotient. The exact classification of invertible phases is a subtle question (see e.g.\ \cite{Wen2013,KitaevTalk2,MR3978827}), and is almost surely coarser than would be predicted from a purely algebraic analysis~\cite{2104.04534}. In \S\ref{subsec.complete} we do prove that our topological orders capture the quotient $$\{\text{anomalous TQFTs}\} = \frac{\{\text{TQFTs}\}}{\{\text{invertible TQFTs}\}},$$ and provide a nonrigorous construction which, if it can be made rigorous, would imply that our topological orders also capture the quotient $$\{\text{topological orders}\} = \frac{\{\text{topological phases}\}}{\{\text{invertible phases}\}}.$$

Section~\ref{sec.braided} explains why codimension-$1$ operators are rarely considered in the study of topological orders, and in particular why \cite{1405.5858} focuses on \define{braided} fusion, rather than fusion, $n$-categories. Let us say that a multifusion $n$-category is \define{fusion} if it has no nontrivial $0$-dimensional operators, in the sense that the only $n$-morphisms from (the identity $(n{-}1)$-morphism on) the identity object to itself are multiples of the identity. Physically, this condition corresponds to the assumption that the local ground state in the topological order is nondegenerate.
Theorem~\ref{thm.braided}, proved in \S\ref{subsec.fusion}, says that such a topological order is determined by its operators of codimension-$2$ and higher. Specifically, that Theorem, together with Corollary~\ref{cor.braidedcentre},
provides an equivalence between fusion $n$-categories with trivial centre and braided fusion $(n{-}1)$-categories with trivial centre 
(establishing Conjecture~2.18 of \cite{KWZ1}). 
After a discussion of higher centres in \S\ref{subsec.braidedcentre}, we explain in \S\ref{subsec.nolines} a generalization of 
 Theorem~\ref{thm.braided}: an $(n{+}1)$-dimensional topological order with no nontrivial operators of dimension $<k$ is fully determined by its $(n-k)$-category of operators of codimension $>k$. This is not to say that there are no operators of smaller codimension, but merely that they are all built, in a canonical way, from the higher-codimension operators. The $k=2$ case, spelled out in detail in Theorem~\ref{thm.nolines}, is the primary unproven ingredient in \cite{PhysRevX.8.021074,PhysRevX.9.021005,PhysRevB.100.045105}.

We end with an analysis in \S\ref{sec.dimbydim} of topological orders in low dimension. 
\S\ref{subsec.0+1d} observes that $(0{+}1)$-dimensional topological orders are what the mathematicians call \define{central simple algebras}, and that a time-reversal structure can be understood as \define{Galois descent data} from $\bC$ to $\bR$.  \S\ref{subsec.1+1d} classifies $(1{+}1)$-dimensional topological orders. Bosonically without any symmetry enhancement, the classification is very simple, but the general case is rich: in Theorem~\ref{thm.1+1d} we classify (bosonic or fermionic) $(1{+}1)$-dimensional topological orders with $G$ flavour symmetry in terms of \define{reduced equivariant (super)cohomology}; if the symmetry is time-reversing, then one must use \define{twisted} (reduced equivariant super)cohomology, or equivalently (reduced equivariant super) \define{Galois} cohomology. These topological orders deserve the name \define{anomalous topological sigma models}. The $(2{+}1)$-dimensional case is briefly addressed in \S\ref{subsec.2+1d}, where we confirm the essence of \cite{10.1093/nsr/nwv077}.
Most of that section is occupied by Remark~\ref{remark.fermioniocWittgroup}, which analyzes a spectrum enhancement of the ``Witt group'' of braided fusion categories.
Then in Theorem~\ref{thm.3+1d}, in \S\ref{subsec.3+1d}, we provide a complete proof of (a mildly corrected version of) the classification of $(3{+}1)$-dimensional topological orders announced in \cite{PhysRevX.8.021074,PhysRevX.9.021005,PhysRevB.100.045105}. 
Specifically, we find that all $(3{+}1)$-dimensional fermionic topological orders are, canonically, anomalous topological sigma models with target a finite groupoid, and the bosonic case is the same together with ``categorified Galois descent data.'' Capping the paper, Remark~\ref{remark.higherdimensions} briefly addresses the extent to which this method of analysis can be pushed to even higher dimensions.

\begin{remark}\label{remark.disclaimer}
  This paper relies heavily on \cite{GJFcond}. In particular, we will make repeated use of Corollary~4.2.4 therein, which provides a way to bootstrap from mild to strong dualizability statements. That Corollary (and indeed most of Sections~4.2 and~4.3 of~\cite{GJFcond}), assumes the existence of a theory of weighted colimits in enriched higher categories. The required properties of such an as-yet-unfinished theory are explicitly stated, and subtle questions are not needed. As such, this author believes that the results in that paper and in this one will compile in any model. But implementing the technical details in a specific model may be a substantial task.
  With luck, this article will stimulate further technical work of higher category theory, and technicians are invited to realize our constructions formally.
\end{remark}

\begin{remark}
  When this paper was near completion, the authors of \cite{KLWZZ} graciously shared a draft of their paper; the two papers were posted on arXiv within days of each other. The two papers have some overlap of ideas and results, and of course the present paper owes many ideas to the work of those authors.
  Notably, 
  Theorem~1.1 of \cite{KLWZZ} is essentially our Definition~\ref{defn.main}, extended to describe topological orders which are enhanced with a ``symmetry group'' 
  described (a la Tannakian duality) by a symmetric fusion $n$-category $\mathcal{R}$. When applied in low dimensions, naturally the classifications essentially match.
  
  The major difference is that, Remark~\ref{remark.disclaimer} notwithstanding, this paper develops the mathematical theory of fusion $n$-categories, which is reviewed but largely taken for granted in \cite{KLWZZ}. In particular, our Theorems~\ref{thm.dualizability}, \ref{thm.invertibility}, and~\ref{thm.anomalousTQFT} establish dualizability and invertibility results in higher Morita categories that justify the attention to fusion categories, and Theorem~\ref{thm.braided} and~\ref{thm.nolines} explain rigorously how it is that $(n{+}1)$-dimensional topological orders which are ``stable'' in the sense that they are robust against small perturbations are fully described by $({<}n)$-categories (without the need to remember the low-codimension extended operators).
\end{remark}

\section{Multifusion $n$-categories}\label{sec.dualizability}

The goal of this section  is to axiomatize the algebras of extended operators in topological orders: we will define ``multifusion $n$-categories'' and their centres. Our definition builds on the definition of multifusion $2$-category by \cite{Reutter2018}.

  We will consistently use the word \define{$n$-category} to mean what is traditionally called \define{weak $n$-category}. Weak $n$-categories have objects through $n$-morphisms. What makes weak $n$-categories ``weak'' is that the composition of $k$-morphisms in a weak $n$-category, for $k<n$, is well-defined only up to a higher coherences, meaning that instead of a single composition, there is really an $(n-k)$-category worth of compositions, and this $(n{-}k)$-category is equivalent to, but usually not equal to, a single point. This distinguishes weak $n$-categories from ``strict $n$-categories,'' which will not be used in this paper. 
    We will sometimes write that two $n$-categorical objects are ``isomorphic'' or even ``equal.''  Of course we never mean equality in the sense of sets, as that notion has no place in $n$-categories. Rather, we mean that they are  equivalent, and that the equivalence is realized via a completely canonical equivalence.
  
  Going in the other direction, weak $n$-categories do not have higher homotopies between $n$-morphisms (the $n$-morphisms form sets, not spaces) and so are not the ``$(\infty,n)$-categories'' studied in much of the contemporary higher category theory literature. But, $(\infty,n)$-category theory does underlie the subject of weak $n$-categories.

  The cleanest definition is to define a \define{weak $0$-category} to be a set, and then to define a \define{weak $n$-category} to be an $(\infty,1)$-category enriched in the $(\infty,1)$-category of weak $(n{-}1)$-categories. 
  One problem with this clean definition is that the complete story of enriched $(\infty,1)$-categories is still being developed \cite{MR3345192}. 
  The definitions are established, but further work is needed in the theory of weighted colimits in order to fully establish \cite[Corollary~4.2.4]{GJFcond}, which as mentioned in Remark~\ref{remark.disclaimer} will be used repeatedly in this paper.

  In order not to distract from the actually important parts of constructions, we will omit from the notation all unitor, associator, and higher coherence data. It's there, but hidden, and the reader may, if they choose, return it. But be careful: the actual data that needs to be returned depends on the model chosen for $n$-categories. An advantage of leaving it out is that we can work model-independently.
  
\begin{remark}  One aspect of the famous Cobordism Hypothesis \cite{BaeDol95,Lur09} is mildly model-dependent.  There is an $(\infty,n)$-category built from framed $n$-manifolds such that functors from it are represented by $n$-dualizable objects. But it may look different in different models, and in particular one could worry that it may deserve the name ``the $(\infty,n)$-category of framed $n$-dimensional cobordisms'' only in some models. 
  Although we will use the Cobordism Hypothesis, we will not require its full strength, and in particular will not bump into this issue: we will work with $n$-dualizable objects (dualizability is a model-independent notion), and write as if we are evaluating the corresponding functorial TQFTs on some cups and caps and spheres, but that is really just a notation for dualizability data.\end{remark}

There are typically no, or very few, nontrivial point, aka vertex, operators in a topological order. (More precisely, the point operators consist just of linear combinations of projections onto the ground states. We will later use the term ``multifusion $n$-category'' when there may be multiple ground states, and ``fusion $n$-category'' for the case when the ground state is nondegenerate.) Thus in order to describe a topological order we must include extended operators. Extended operators in a topological order are also called, variously, ``defects'' and ``excitations.''

We will use the spacetime dimension to count the dimension of operators, so that vertex operators are $0$-dimensional. Line (aka $1$-dimensional) operators are also called \define{anyons}. They are well-known to form a category, with $0$-dimensional junctions between anyons as the morphisms. One may bring anyons together, and since the system is topological, anyons may be ``fused'' without issue; in an $(n{+}1)$-dimensional topological order, anyons furthermore have $n$ ambient dimensions in which they may braid, and so the line operators in a topological order form an ``$n$-monoidal $1$-category.''
 Similarly, surface 
  operators form a $2$-category, with $1$-dimensional junctions as the $1$-morphisms and $0$-dimensional junctions between junctions as the $2$-morphisms; surfaces in $n{+}1$ dimensions have $n{-}1$ ambient dimensions in which to braid, so they form an ``$(n{-}1)$-monoidal $2$-category.''
  Weak $n$-categories satisfy the very useful (Breen--)Baez--Dolan \define{stabilization hypothesis}, which asserts that a $p$-monoidal $q$-category is in fact symmetric (aka $\infty$-monoidal) for $p > q+1$ \cite{SimpsonStabilization}.
  
  \begin{remark}
  A ``$1$-monoidal'' category is merely monoidal; a ``$2$-monoidal'' category is braided monoidal; ``$n$-monoidal'' is the higher generalization. 
At the bottom, $\cC$ is ``$0$-monoidal'' when it is equipped with an object $1 \in \cC$.
An $n$-monoidal category is called ``$n$-tuply monoidal'' in \cite{BaeDol95}, and the contemporary mathematics literature often uses the phrase ``$E_n$-monoidal.''
 The mathematical axioms of ``$p$-monoidal $q$-category'' will not be reviewed here; see for example~\cite{ScheimbauerThesis}. 
 \end{remark}
  
 Any $p$-monoidal $q$-category $\cC$ has an identity object $\mathbf 1$, and 
 its endomorphisms are
 $$ \Omega\cC = \End_{\cC}(\mathbf 1),$$
 which is automatically a $(p{+}1)$-monoidal $(q{-}1)$-category.
 For example, $\Omega\{\text{surface operators}\} = \{\text{line operators}\}$, because a line operator is nothing but a junction from the vacuum surface operator to itself. Continuing up in dimension, we see that an $(n{+}1)$-dimensional topological order has a $1$-monoidal $n$-category $\cA$ of extended operators. The objects of $\cA$ are the codimension-$1$ operators, the 1-morphisms are junctions (of spacetime codimension-$2$) between codimension-$1$ operators, etc. For each $k$, the $(k{+}1)$-monoidal $(n{-}k)$-category $\Omega^k\cA$ consists of the operators in the topological order of codimension $>k$, i.e.\ dimension $\leq n-k$.

\subsection{$n$-categorical condensation}\label{subsec.condensation}

The operators in  an $(n{+}1)$-dimensional topological order are not merely a monoidal $n$-category: in addition to the monoidal $n$-category structure, which encodes how to operators fuse, there are some further ways to produce new operators from old ones. At the bottom, the $0$-dimensional operators form a vector space over $\bC$, and $\cA$ is  a $\bC$-linear $n$-category. (An $n$-category is \define{$\bC$-linear} if the set of all $n$-morphisms with given source and target is in fact a $\bC$-vector space, and if all compositions are $\bC$-linear in each variable.) Higher dimensional operators can also be ``added'' together via a direct sum to produce composite operators, meaning that $\cA$ is \define{additive}. 

Recall that one can detect whether a line operator $X$ is simple or composite by studying the endomorphism algebra $\End(X)$: if $\End(X) \neq \bC$, then it contains a projection $P \in \End(X)$, i.e.\ an endomorphism satisfying $P^2 = P$, and the image of this projection is a direct summand of $X$ and is another line operator. The fact that this image exists is the assertion that the category of anyons is \define{Karoubi (aka idempotent) complete}. 
The notion of ``Karoubi completeness'' also makes sense for $n$-categories, and is due to \cite{GJFcond}, where its tight relationship to (gapped topological) condensation is explored. The main definition $n$-categorifies the notion of projection onto a direct summand:
\begin{definition}
Let $\cC$ be an $n$-category with objects $X,Y \in \cC$. A \define{(categorical) condensation} of $X$ onto $Y$, denoted $X \condense Y$, consists of morphisms $f : X \leftrightarrows Y : g$ together with a condensation $fg \condense \id_Y$. Note that the latter is a condensation in the $(n{-}1)$-category $\End_\cC(Y)$, and so the definition compiles by induction. To begin the induction, equalities are declared to be examples of condensations.
\end{definition}

The correct $n$-categorification of ``idempotent'' is called \define{condensation monad} in \cite{GJFcond}, and
ends up producing a version of ``special Frobenius algebra'': the axiom $P^2 = P$ of a projector gets replaced by the data of a categorical condensation $P^2 \condense P$ which is associative in a suitable sense; unpacked, this provides maps $P^2 \leftrightarrows P$  satisfying associativity, Frobenius, and specialness conditions. 
 The $n$-category $\cC$ is \define{Karoubi complete} when every condensation monad factors through a condensation. (The factorization is automatically unique if it exists.) The $n=2$ version of condensations and condensation monads first appeared (under a different name) in \cite{Reutter2018}.

The construction in~\S2.4 of~\cite{GJFcond} implies that if $\cA$ is the monoidal $n$-category of operators in an $(n{+}1)$-dimensional topological order, then $\cA$ is Karoubi complete.

\begin{remark}\label{remark.physicalcondensationexplainer}
The term \define{monad} refers to an algebra object in the monoidal (higher) category of endomorphisms of some other higher-categorical object. Condensation monads are essentially the ``condensable algebras'' that appear in the modular tensor category literature. The term ``condensable algebra'' is used in a few closely related ways. For example, in Section~2.6 of~\cite{KONG2014436}, Kong identifies condensable algebras $A$ in $(1{+}1)$-dimensional systems (aka \define{1d condensable algebras}) with unital connected symmetric
special  Frobenius algebras; condensation monads (in $(1{+}1)$-dimensions) are almost the same but drop the unitality, connectedness, and symmetry requirements. With or without the extra requirements, to \define{condense} $A$ is to split it in the 2-category of $(1{+}1)$-dimensional systems topological orders: specifically, its condensation is the topological which is microscopically modelled by a network of $A$-lines. 

Dropping the unitality requirement makes no substantive difference by Theorem~3.1.7 of~\cite{GJFcond}. The connectedness condition simply assures that the result of condensation has a nondegenerate ground state; as mentioned already and discussed further in Section~\ref{sec.braided}, we feel like this condition is a useful one to pay attention to, but should not be built into the definitions. Finally, the symmetry requirement of~\cite{KONG2014436} arises physically from requiring that the condensed theory does not require any spin structure or framing in order to be defined. Again, we feel that it is a useful condition to attend to but should not be part of the definition, as a typical microscopic lattice model does come with a framing.

A second usage of the term ``condensable algebra'' appears in the modular tensor category analysis of $(2{+}1)$-dimensional topological orders. Suppose that one has a $(2{+}1)$-dimensional topological order whose line operators form the unitary modular tensor category $\cB$. Then a \define{2d condensable algebras} is a connected commutative separable unital algebras object $A$ in $\cB$, perhaps with an additional symmetry requirement. The separability allows $A$ to be enhanced to a special Frobenius algebra object, and Lemma~3.3.2 of~\cite{GJFcond} implies that the choice of enhancement is irrelevant. The connectedness, unitality, and symmetry requirements play the same role as in the $(1{+}1)$-dimensional case.

If one takes a not-necessarily-commutative special Frobenius algebra $A$ in $\cB$, its condensation is its splitting in the 2-category of surface and line operators. Explicitly, the condensation of $A$ is the surface operator which is microscopically modelled by a network of $A$-lines. The commutativity requirement on $A$ supplies extra structure to the condensate surface operator: specifically, it makes that surface operator into a condensation algebra object in the monoidal 2-category of surface operators. In other words, our $(2{+}1)$-dimensional topological order, imagined as an object in some 3-category of all $(2{+}1)$-dimensional topological orders, would support a condensation monad. The image aka condensate of that condensation monad would be the new phase produced from condensing $A$ in the style of \cite{KONG2014436}.
  
  In general dimension, the translation between physical and categorical condensation is as follows. Suppose $X$ is some $k$-morphism in $\cA$, the $n$-category of all operators, and $P \in \End(X)$ is a condensation monad. The physical condensate $Y$ is built by decorating an $X$ defect with a microscopic network of condensation monad data. In the other direction, given a physical condensation procedure that produces $Y$ as a condensate of $X$, one considers a $Y$-defect with a collection of small bubbles of $X$. By expanding those bubbles, one builds in this way a network of data on $X$, and these data precisely compile into a condensation monad. This ``expand the bubbles'' idea is what motivates the construction in~\S2.4 of~\cite{GJFcond}; related discussion is in \S3.3~of~\cite{KLWZZ} and  Footnote~4 of \cite{KONG2014436}.
\end{remark}

We will henceforth write $n\cat{KarCat}_\bC$ for the $(n{+}1)$-category of $\bC$-linear additive and Karoubi complete $n$-categories (and arbitrary $\bC$-linear functors).
The construction $\cC \leadsto \Omega\cC$ taking a $k$-monoidal $m$-category to a $(k{+}1)$-monoidal $(m{-}1)$-category has an adjoint in the Karoubi-complete world. Denoted $\cC \leadsto \Sigma\cC$, it takes a $k$-monoidal $m$-category $\cC$ first to its ``one-point delooping'' $\rB\cC$, which is the $(k{-}1)$-monoidal $(m{+}1)$-category with one object ``$\bullet$'' and $\End(\bullet) = \cC$, and then Karoubi completes:
$$\Sigma \cC := \Kar(\rB\cC).$$
The Karoubi completion $\Kar(-)$ is described explicitly in \cite{GJFcond}. For 1-categories it is the familiar idempotent completion. 
 Under the hypothesis that all objects in $\cC$ have duals, Theorem~3.3.3 of \cite{GJFcond} asserts that $\Sigma\cC$ is equivalent to the $(m{+}1)$-category of separable associative algebra objects internal to $\cC$, with 1-morphisms given by internal-to-$\cC$ bimodules. As a matter of convention, we also declare:
$$\Sigma\bC := \Vect^\fd_\bC.$$
This convention is justified in Example 4.3.6 of \cite{GJFcond}.

We will make repeated use of the following technical result, which is essentially the ``Categorical Schur's Lemma'' \cite[Proposition 1.2.19]{Reutter2018}:
\begin{lemma}\label{lemma.technical}
  Suppose that $\cS$ is a Karoubi-complete, additive, and $\bC$-linear $n$-category, pointed by an object $\mathbf 1 \in \cS$. Define $\Omega\cS = \End_\cS(\mathbf 1)$, $\Omega^2\cS = \End_{\Omega\cS}(\id_{\mathbf 1})$, and so on, so that $\Omega^k\cS$ is a $k$-monoidal $(n{-}k)$-category. Suppose that the $\bC$-linear $1$-category $\Omega^{n-1}\cS$ is semisimple and that $\Omega^n\cS = \bC$. Then every nonzero fully adjunctible $1$-morphism $f : X \to \mathbf 1$ in $\cS$ extends to a condensation $X \condense \mathbf 1$. 
\end{lemma}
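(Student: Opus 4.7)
The plan is to proceed by induction on $n \geq 1$. For the base case $n = 1$, $\cS$ is a semisimple $\bC$-linear $1$-category in which $\End_\cS(\mathbf 1) = \bC$, making $\mathbf 1$ simple. I would decompose $X$ into a finite direct sum of simples; nonvanishing of $f : X \to \mathbf 1$ forces one of the components to be an isomorphism $X_i \to \mathbf 1$, whose inverse, composed with the inclusion $X_i \hookrightarrow X$, gives a section $g$ with $fg = \id_{\mathbf 1}$. In a $1$-category this equality is exactly the data of a condensation $X \condense \mathbf 1$.

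For the inductive step ($n \geq 2$), I would take $g := f^R : \mathbf 1 \to X$ to be the right adjoint of $f$ and let $\epsilon : ff^R \to \id_{\mathbf 1}$ denote the counit, viewed as a $1$-morphism in the pointed $(n-1)$-category $\Omega\cS := \End_\cS(\mathbf 1)$. Unwinding the recursive definition of condensation, producing $X \condense \mathbf 1$ reduces to producing a condensation $ff^R \condense \id_{\mathbf 1}$ inside $\Omega\cS$. The pointed $(n-1)$-category $(\Omega\cS, \id_{\mathbf 1})$ satisfies the hypotheses of the lemma with $n$ replaced by $n-1$: indeed $\Omega^{(n-1)-1}(\Omega\cS) = \Omega^{n-1}\cS$ is semisimple and $\Omega^{n-1}(\Omega\cS) = \Omega^n\cS = \bC$. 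So the inductive hypothesis applies as soon as I verify that $\epsilon$ is nonzero and fully adjunctible as a $1$-morphism of $\Omega\cS$.

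Nonzeroness follows from the triangle identity $\id_f = (\epsilon \circ \id_f) \bullet (\id_f \circ \eta)$: horizontal composition is bilinear, so $\epsilon = 0$ would force $\id_f = 0$, hence $f = 0$, contradicting our hypothesis. Full adjunctibility of $\epsilon$ should be a definitional unwinding of full adjunctibility of $f$: the entire tower of iterated adjoint data for $f$ is hypothesized to exist, and $\epsilon$ sits at the first rung of that tower, so all the higher adjunctibility data needed for $\epsilon$ is extracted from the higher rungs above it.

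The main obstacle I anticipate is exactly this last definitional unwinding---checking, in whatever model of $n$-categories one works with, that ``the counit $2$-morphism of a fully adjunctible $1$-morphism is itself a fully adjunctible $1$-morphism of the looped category.'' This is morally trivial but, like many statements about higher adjoints, takes some care to make precise, and it is the only non-routine step in the argument. Once it is settled, the induction produces the desired condensation $ff^R \condense \id_{\mathbf 1}$ in $\Omega\cS$, which combined with $f$ and $f^R$ assembles into the required condensation $X \condense \mathbf 1$.
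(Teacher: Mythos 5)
Your proof is correct and follows essentially the same route as the paper's: the same base case via semisimplicity and simplicity of $\mathbf 1$, and the same inductive step taking $g = f^R$ and reducing to a condensation $ff^R \condense \id_{\mathbf 1}$ in $\Omega\cS$. Your triangle-identity argument for nonzeroness of the counit, and your flagging of the adjunctibility unwinding as the only delicate point, match (and slightly sharpen) what the paper asserts without elaboration.
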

Note that semisimplicity of $\Omega^{n-1}\cS$ follows, for example, from dualizability of $\Omega^{n-1}\cS$ in $\cat{KarCat}_\bC$, which follows, for example, from dualizability of $\cS$ in $n\cat{KarCat}_\bC$.
\begin{proof}
  When $n=1$, we have a semisimple $\bC$-linear category $\cS$ with an object $\mathbf 1 \in \cS$ which is simple since $\Omega\cS = \End_\cS(\mathbf 1) = \bC$. Then any nonzero $f : X \to \mathbf 1$ has a splitting $g : \mathbf 1 \to X$, and together $(f,g)$ are a condensation $X \condense \mathbf 1$.
  
  When $n > 1$, choose $g : \mathbf 1 \to X$ to be the right adjoint of $f$, and let $\phi : fg \Rightarrow \id_{\mathbf 1}$ be the counit of the adjunction. Then $\phi$ is a $1$-morphism in the $(n{-}1)$-category $\Omega\cS$, which satisfies all conditions of the Theorem: full adjunctibility of $\phi$ is part of fully adjunctibility of $f$, and nonzeroness of $\phi$ is equivalent to nonzeroness of $f$. So $\phi$ extends to a condensation $fg \condense \id_{\mathbf 1}$ by induction, and so we have built a condensation $X \condense \mathbf 1$.
\end{proof}

\subsection{$\cat{Mor}_1(n\cat{KarCat}_\bC)$}\label{subsec.mor1}

We thus find ourselves interested in monoidal objects of $n\cat{KarCat}_\bC$. In order to cleanly study such objects, we quote without proof two facts about the $(n{+}1)$-category $n\cat{KarCat}_\bC$. 
The first fact is that $n\cat{KarCat}_\bC$ is symmetric monoidal under a \define{Karoubi-completed tensor product} $\boxtimes$. To define it, assume by induction that the Karoubi-completed tensor product is defined on $(n{-}1)\cat{KarCat}_\bC$. Now define, for $\cA,\cB \in n\cat{KarCat}_\bC$, their \define{na\"ive tensor product} $\cA \otimes \cB$ to be the $n$-category with object set $$\ob(\cA \otimes \cB) = \ob(\cA) \times \ob(\cB)$$ and morphism spaces \begin{multline*}\hom_{\cA \otimes \cB}((A_1,B_1),(A_2,B_2)) \\ = \hom_\cA(A_1,A_2) \boxtimes \hom_\cB(B_1,B_2).\end{multline*} (By definition, $\hom_\cA(A_1,A_2)$ and $\hom_\cB(B_1,B_2)$ are each Karoubi-complete $(n{-}1)$-categories, since $\cA$ and $\cB$ are Karoubi-complete $n$-categories.)
 The tensor product $\cA \boxtimes \cB$ is defined to be the Karoubi completion of the na\"ive tensor product:
$$ \cA \boxtimes \cB = \Kar(\cA \otimes \cB).$$
That this definition compiles uses implicitly Theorem~2.3.10 of \cite{GJFcond}, which asserts that $\Kar(-)$ takes products to products; in turn, that this definition compiles is used in order to define $\Kar(-)$ for $(n{+}1)$-categories. The unit object in $n\cat{KarCat}_\bC$ is $\Sigma^n\bC = \Sigma^{n-1}\cat{Vec}^\fd_\bC$.

The second fact we will need about $n\cat{KarCat}_\bC$ is that it is closed under colimits, and that $\boxtimes$ distributes over colimits. We will not say much about this fact, simply observing that it follows from the definition $\boxtimes := \Kar(\otimes)$, together with the 
adjunction between $\Kar(-)$
and the forgetful functor from $n\cat{KarCat}_\bC$ to the $(n{+}1)$-category $n\cat{Cat}_\bC$ of all $\bC$-linear $n$-categories. With these two facts in place, the construction from \S8 of \cite{JFS} builds a symmetric monoidal $(n{+}2)$-category $\cat{Mor}_1(n\cat{KarCat}_\bC)$ whose objects are the monoid objects in $n\cat{KarCat}_\bC$ and whose $1$-morphisms are the (unpointed) bimodule objects. 
This category would be called ``unpointed $\cat{Alg}_1^{\mathrm{strong}}(n\cat{KarCat}_\bC)$'' in the language of \cite{JFS}. We will call it ``$\cat{Mor}_1$'' because equivalence therein is \define{Morita equivalence}.

\begin{remark}\label{remark.stackingdescription}
  So as not to stray too far from the physics, let us unpack these constructions in the case of topological orders. Suppose $\cA$ and $\cB$ are monoid objects in $n\cat{KarCat}_\bC$ describing $(n{+}1)$-dimensional topological orders. These topological orders may be layered (aka stacked), by placing the orders on adjacent sheets separated by an infinitely-good insulator; write $\cC$ for the corresponding monoidal $n$-category.
  Operators in $\cA$ and $\cB$ determine operators in the layered system, and so there is a functor $\cA \otimes \cB \to \cC$. But $\cC$ is Karoubi-complete, and so this functor extends to a functor $\cA \boxtimes \cB \to \cC$ (which we will later assume is an equivalence; see \S\ref{sec.justification}).
  
  The physical meaning of $\cat{Mor}_1(n\cat{KarCat}_\bC)$ is the following. Suppose $\cA$ and $\cB$ describe $(n{+}1)$-dimensional topological orders, and choose a topological interface between them. That interface is $n$-spacetime-dimensional, and so supports a monoidal $(n{-}1)$-category $\cM$ of operators. These three categories are compatible. For example, operators in $\cA$ or $\cB$ of codimension $\geq 2$ may be brought to the interface to define operators in $\cM$; operators of codimension-$1$, i.e.\ objects, in $\cA$ or $\cB$ may be fused with $\cM$ to define new interfaces. Consider (the Karoubi completion of) the $n$-category of all $\cA$-$\cB$ interfaces that can be built from $\cM$ in this way. That $n$-category is naturally an $\cA$-$\cB$ bimodule, i.e.\ a $1$-morphism in $\cat{Mor}_1(n\cat{KarCat}_\bC)$. (But note that $\cat{Mor}_1(n\cat{KarCat}_\bC)$ contains other $1$-morphisms that do not arise in this way.)
\end{remark}

\subsection{A dualizability result}\label{subsec.dualizability}

A $1$-morphism $f : X \to Y$ in an $n$-category \define{has adjoints} if there exist $1$-morphisms $f^R, f^L : Y \rightrightarrows X$ and evaluation and coevaluation $2$-morphisms $\id_X \to f^R f$, $ff^R \to \id_Y$, $\id_Y \to ff^L$, $f^Lf \to \id_X$ such that the various compositions $f \to ff^L f \to f$, $f \to ff^R f \to f$, $f^L \to f^L f f^L \to f^L$, and $f^R \to f^R f f^R \to f^R$ that can be made from evaluation and coevaluation are all equivalent to identities. A morphism is \define{adjunctible} if it has adjoints and its adjoints have adjoints, ad infinitum. An object $X \in \cC$ in a monoidal $n$-category is \define{$1$-dualizable} if it is adjunctible as a $1$-morphism in the $(n{+}1)$-category $\rB\cC$. In many situations there is an isomorphism $f^R \cong f^L$, and so adjunctibility is not infinitely much data. For instance, if $\cC$ is symmetric monoidal, then $X^R \cong X^L$ for any $1$-dualizable object $X \in \cC$. If $\cC$ is a symmetric monoidal $n$-category, then $X \in \cC$ is \define{$k$-dualizable}  if it is dualizable, and the evaluation and coevaluation morphisms witnessing dualizability are adjunctible, and their evaluation and coevaluation morphisms are adjunctible, and so on, up to dimension $k$, and \define{fully dualizable} if it is $n$-dualizable. (The only way an object in a symmetric monoidal $n$-category can be $(>n)$-dualizable is if it is invertible.)

It is a well-known folk theorem that, for any reasonable symmetric monoidal higher category $\cS$, {every} object of $\cat{Mor}_1(\cS)$ is $1$-dualizable; c.f.~\cite{Schommer-Pries:thesis,MR3221292,ScheimbauerThesis,GwSch18}. (Sufficient conditions on $\cS$ are that it be
 what in \cite{JFS} is called ``$\otimes$-GR-cocomplete''; indeed, $\otimes$-GR-cocompleteness is required to construct $\cat{Mor}_1(\cS)$, and simply asserts the existence of certain colimits in $\cS$, which are preserved by the monoidal structure.)
 Applied to the $(n{+}1)$-category $\cS = n\cat{KarCat}_\bC$, we learn that if $\cA$ is the monoidal $n$-category of operators in an $(n{+}1)$-dimensional topological order, then $\cA$ is $1$-dualizable as an object in $\cat{Mor}_1(n\cat{KarCat}_\bC)$. We will argue in \S\ref{sec.justification} that it is (at least) $2$-dualizable. But:

\begin{theorem}\label{thm.dualizability}
   If $\cA \in \Mor_1(n\cat{KarCat}_\bC)$ is $2$-dualizable, then it is fully, i.e.\ $(n{+}2)$-dualizable, dualizable.
\end{theorem}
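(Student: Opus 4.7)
The plan is to bootstrap from $2$-dualizability to full $(n{+}2)$-dualizability, using that the ambient symmetric monoidal $(n{+}1)$-category $\cS := n\cat{KarCat}_\bC$ is itself Karoubi complete. The folk theorem already supplies $1$-dualizability: $\cA^{\op}$ is a dual, and the evaluation is $\cA$ viewed as a regular $(\cA \boxtimes \cA^{\op})$-module. Two-dualizability upgrades this by providing the evaluation bimodule $\ev : \cA \boxtimes \cA^{\op} \to \unit$ with a right adjoint $\ev^R : \unit \to \cA \boxtimes \cA^{\op}$. My first step is to identify $\ev^R$ with a distinguished $(\cA,\cA)$-bimodule $\cA_{\mathrm{sep}}$ making $\cA$ into a ``separable Frobenius algebra object'' of $\cS$ — the units and counits of the adjunction are exactly Frobenius comultiplication/counit, and the additional triangle identity encodes specialness/separability. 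This generalizes the familiar characterization of $2$-dualizable objects of $\cat{Alg}_1(\Vect)$ as finite-dimensional separable algebras.

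With the separability datum in hand, I would then proceed by induction on $k$ to show that $k$-dualizability of $\cA$ implies $(k{+}1)$-dualizability for $2 \leq k \leq n{+}1$. Unpacking the construction of $\Mor_1(\cS)$ from \cite{JFS}, the $(k{+}1)$-st adjoint of an established adjoint chain is a specific $(n{-}k)$-categorical piece of bimodule data living in the hom-$(n{-}k{+}1)$-category of $\cS$; its existence amounts to the splitting of a certain condensation monad built canonically from the separability structure $\cA_{\mathrm{sep}}$ by iteratively composing with evaluations and coevaluations from lower levels. Karoubi completeness of $n\cat{KarCat}_\bC$, together with the fact (from \cite{GJFcond}) that hom-categories of Karoubi-complete $n$-categories are themselves Karoubi complete, guarantees that every such condensation monad splits. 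The splitting supplies the missing adjoint, together with evaluation and coevaluation morphisms to the next level down, so the induction carries through.

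The main obstacle is the combinatorial bookkeeping in the inductive step: one must pin down exactly which condensation monad realizes each new adjoint and verify that the splittings assemble coherently into data satisfying the triangle (and higher triangle) identities up to adjunctible morphisms. The cleanest way I see to handle this is to interpret the dualizability data through the cobordism hypothesis — each successive adjoint corresponds to a cup/cap of increasing dimension — and observe that all such cup/cap cobordisms can be built by iterated condensation from the $2$-dimensional separability data. The formalism of \cite{GJFcond} is designed precisely to automate this kind of condensation-theoretic assembly: once the base case (the separability structure on $\cA$) is identified, every higher layer of adjunction data is a tautological consequence of Karoubi completeness, and the induction reduces to checking universal properties rather than constructing morphisms by hand.
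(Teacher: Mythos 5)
Your overall strategy --- extract algebraic data from $2$-dualizability and then bootstrap to full dualizability using the Karoubi-complete/condensation formalism of \cite{GJFcond} --- is aligned with the paper's first proof, but the proposal has a genuine gap at its centre: the inductive step from $k$- to $(k{+}1)$-dualizability is asserted rather than proved, and as stated it proves too much. You extract from $2$-dualizability only the \emph{separability} half of the data (the adjoint $\ev^R$ packaged as a Frobenius/separability structure) and then claim that every higher layer of adjunction data is a ``tautological consequence of Karoubi completeness'' given that datum. But $2$-dualizability of $\cA$ in $\Mor_1(n\cat{KarCat}_\bC)$ encodes \emph{two} independent conditions, and both are needed: \emph{properness} (the underlying $n$-category of $\cA$ is dualizable in $n\cat{KarCat}_\bC$, hence lies in $\Sigma^n\Vect_\bC$ by Corollary~4.2.4 of \cite{GJFcond}) and \emph{smoothness} (the multiplication extends to a condensation $\cA^e \condense \cA$ of $\cA^e$-modules). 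Separability alone does not imply properness --- an infinite product of copies of $\Vect_\bC$ under pointwise tensor product is separable in your sense but its underlying category is not dualizable in $\cat{KarCat}_\bC$ --- so an inductive mechanism that sees only the datum $\cA_{\mathrm{sep}}$ cannot generate the missing adjoints. The paper's first proof spends its first half precisely on extracting properness from one of the two adjoints of the evaluation bimodule; that step is absent from your argument.

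Second, Karoubi completeness says that condensation monads split; it does not by itself tell you that the morphisms whose adjunctibility you need are images of condensation monads. Identifying them as such is exactly the content of Theorem~4.1.1 of \cite{GJFcond} (objects obtained from the unit by iterated condensation, i.e.\ objects of $\Sigma(\Sigma^n\Vect_\bC)$, are fully dualizable), which is a substantive theorem and not bookkeeping; your induction in effect re-asserts it without proving or citing it. The appeal to the Cobordism Hypothesis to organize the cups and caps is also circular: that result converts full dualizability into cobordism data, so it cannot be used to establish full dualizability. The repair is short and is what the paper does: derive properness and smoothness separately from the two adjoints supplied by $2$-dualizability, conclude $\cA \in \Sigma(\Sigma^n\Vect_\bC)$, and then invoke Theorem~4.1.1 of \cite{GJFcond} rather than running a by-hand induction on the dualizability level.
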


We will give two proofs of this fundamental result, because they illustrate different features of $\Mor_1(n\cat{KarCat}_\bC)$.

\begin{proof}[First proof of Theorem~\ref{thm.dualizability}]
 When $n=0$, so that $n\cat{KarCat}_\bC = \Vect_\bC$, there is nothing to prove. Recall the standard fact that an algebra $A \in \Mor_1(\Vect)$ is $2$-dualizable if and only if it is both:
 \begin{itemize}
   \item \define{proper}, aka finite-dimensional, i.e.\ the underlying vector space of $A$ is $1$-dualizable;
   \item \define{smooth}, aka separable, i.e.\ the multiplication map $m: A \otimes A \to A$ splits as a map of $A$-bimodules.
 \end{itemize}
We will prove a version of this fact in higher categories. To begin, recall that  $\cA$ is automatically $1$-dualizable in $\Mor_1(n\cat{KarCat}_\bC)$, with dual $\cA^\op$, and so the question of $2$-dualizability amounts to asking whether the bimodule $_{\cA^e}\cA_{\mathbf 1}$ has both adjoints, where $\cA^e = \cA \boxtimes \cA^\op$ is the \define{enveloping algebra} of $\cA$ and $\mathbf 1 = \Sigma^n\bC \in \Mor_1(n\cat{KarCat}_\bC)$ is the unit object.

Suppose that $\cS$ is any reasonable symmetric monoidal higher category, with tensor product $\otimes$ and unit~$\mathbf{1}$, and that $P,Q \in \Mor_1(\cS)$ are unital associative algebra objects in $\cS$, and that $_P M_Q$ is a bimodule between them, which has both adjoints. One of the adjoints of $M$ (whether left or right depends on convention) will be a bimodule $_Q N _P$ equipped with maps
\begin{align*} \epsilon &:\, {_Q N _P}\, \underset P\otimes\, {_P M _Q} \to {_Q Q _Q}, \\
\eta &:\, {_P P _P} \to {_P M _Q} \,\underset Q\otimes\, {_Q N _P}. \end{align*}
Precompose $\epsilon$ with the quotient map $$N \otimes M \to N \,\underset P\otimes\, M,$$ and evaluate $\eta$ on the identity element $1 \in P$. The result are maps
\begin{align*}
\epsilon' & :\, {_Q N} \,\otimes\, {M_Q} \to {_Q Q_Q}, \\
\eta' &:\, \mathbf{1} \to M \underset Q\otimes N, \end{align*}
which witness ${_Q N}$ as the adjoint to ${M_Q}$. Moreover, these maps identify
$$ N \cong \hom_Q({M_Q}, {Q_Q}).$$
The naturality of $\epsilon',\eta'$ implies that this identification is in fact an isomorphism of $Q$--$P$ bimodules.

Applying this to ${_P M_Q} = {_{\cA^e}\cA_{\mathbf 1}}$ implies in particular that the underlying $n$-category of $\cA$ is $1$-dualizable in $n\cat{KarCat}_\bC$. But Corollary 4.2.4 of \cite{GJFcond} then implies that it is fully-dualizable in $n\cat{KarCat}_\bC$, and is an object of the subcategory $\Sigma^n\Vect_\bC$. This is the sense in which $\cA$ is \define{proper}.

We must now investigate adjunctibility on the other side, i.e.\ with the roles of $P$ and $Q$ reversed. Theorem 4.2.2 of \cite{GJFcond}, applied to $\cC = \rB (\cA^e)$, says that $\cA$ is adjunctible as an $\cA^e$-module if and only if $\cA \in \Sigma(\cA^e)$, meaning that there is a categorical condensation $\cA^e \condense \cA$ in the $(n{+}1)$-category of $\cA^e$-modules. Indeed, the proof of that theorem provides an algorithm to find such a condensation from a presentation of $\cA$ as a $\cA^e$-module. Note that the multiplication $m : \cA^e \to \cA$ is a map of $\cA^e$-modules; it witnesses $\cA$ as a cyclic module (i.e.\ a module with only one generator), and can be extended to a presentation. Applied to this presentation, the proof then constructs an $\cA^e$-linear map $\Delta : \cA \to \cA^e$ ``splitting'' the multiplication $m$ in the sense that $m \circ \Delta : \cA \to \cA$ can be further condensed onto the identity. This is the sense in which $\cA$ is \define{smooth}.

But smoothness of $\cA$ is nothing but the statement that $\cA$ is a ``condensation algebra'' in the sense of \cite{GJFcond}, and in particular (since it is also proper) an object in $\Sigma(\Sigma^n\Vect_\bC) \subset \Mor_1(\Sigma^n\Vect_\bC)$. Thus $\cA$ is fully dualizable by Theorem 4.1.1 of \cite{GJFcond}.
\end{proof}

\begin{proof}[Second proof of Theorem~\ref{thm.dualizability}]
There is not a functor $\cat{Mor}_1(n\cat{KarCat}_\bC) \to (n{+}1)\cat{KarCat}_\bC$ extending the construction $\cA \mapsto \Sigma\cA$. The problem is to assign the values on $1$-morphisms.
 A functor $\Sigma\cA \to \Sigma\cB$ of $(n{+}1)$-categories is determined by where it takes the object $\bullet \in \rB\cA \subset \Sigma\cA$. We would like to assign to an $\cA$-$\cB$-bimodule $\cM$ the functor that takes $\bullet$ to $\cM$-as-a-$\cB$-module. But this is an object of $\Sigma\cB \subset \Mod(\cB)$ if and only if it is adjunctible on the $\cB$-side (c.f.\ Corollary 4.2.4 of \cite{GJFcond}).
 
 The upshot is that $\Sigma(-)$ does extend to a faithful functor to $(n{+}1)\cat{KarCat}_\bC$ from the sub-$(n{+}2)$-category of $\cat{Mor}_1(n\cat{KarCat}_\bC)$ on the same objects, but with only the adjunctible $1$-morphisms. In particular, if $\cA$ is at least $2$-dualizable in $\cat{Mor}_1(n\cat{KarCat}_\bC)$, then $\Sigma\cA$ is at least $2$-dualizable in $(n{+}1)\cat{KarCat}_\bC$. But then $\Sigma\cA$ is fully dualizable in $(n{+}1)\cat{KarCat}_\bC$ by Corollary 4.2.4 of~\cite{GJFcond}, and hence fully dualizable in $\cat{Mor}_1(n\cat{KarCat}_\bC)$.
\end{proof}

\begin{remark}\label{remark.bestiary}
The following remark about Theorem~\ref{thm.dualizability} is based on a suggestion by C.\ Scheimbauer. 
 By definition, a symmetric monoidal $\bC$-linear $(n{+}1)$-category $\cC$  satisfies the \define{Bestiary Hypothesis} if its fully dualizable subcategory $\cC^\fd$ is equivalent to $\Sigma^{n+1}\bC$. This happens precisely when $\cC$ is additive and Karoubi complete, $\Omega\cC$ satisfies the Bestiary Hypothesis, and furthermore all objects of $\cC^\fd$ are condensates of the identity object.
 
 Scheimbauer has suggested that the Bestiary Hypothesis be used as a desideratum when evaluation a proposal for ``the $n$-category of $n$-vector spaces.'' The name comes from the appendix of \cite{BDSPV}, which establishes the Bestiary Hypothesis for many (but not all) of the $2$-categories that have been proposed in the literature as ``the $2$-category of $2$-vector spaces.''  Corollary 4.2.3 of \cite{GJFcond}, applied to $\cC = \Sigma^n\bC$, can be summarized by saying that $n\cat{KarCat}_\bC$ satisfies the Bestiary Hypothesis.
 
 The proofs of Theorem~\ref{thm.dualizability} amount to the fact that $\cat{Mor}_1(n\cat{KarCat}_\bC)$ also satisfies the Bestiary Hypothesis. Indeed, the first proof explicitly realizes dualizable objects in $\cat{Mor}_1(n\cat{KarCat}_\bC)$ as condensates of the identity object. The second proof argues more abstractly that the fully dualizable sub-$(n{+}2)$-categories of $\cat{Mor}_1(n\cat{KarCat}_\bC)$ and of $(n{+}1)\cat{KarCat}_\bC$ agree.
 
 Theorem~\ref{thm.dualizability} should generalize to the statement that if an $(n{+}1)$-category $\cS$ satisfies the Bestiary Hypothesis, then so does the $(n{+}2)$-category $\Mor_1(\cS)$. The motivated reader is invited to publish the details.
\end{remark}

When $n=1$ we recover some familiar category theory. Recall that a \define{multifusion (1-)category} is a $\bC$-linear, additive, and Karoubi-complete monoidal $1$-category which is semisimple with finitely many isomorphism classes of simple objects, and such that each object $X \in \cA$ is dualizable.

\begin{corollary}\label{cor.multifusion}
  $\cA \in \cat{Mor}_1(\cat{KarCat}_\bC)$ is fully dualizable if and only if it is a multifusion category.
\end{corollary}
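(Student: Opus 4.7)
The plan is to reduce the statement to the analysis of $2$-dualizability already carried out in the first proof of Theorem~\ref{thm.dualizability}, specialized now to $n=1$. By Theorem~\ref{thm.dualizability}, $\cA$ is fully dualizable iff $2$-dualizable; and by that theorem's first proof, $2$-dualizability decomposes as the conjunction of \emph{properness} (the underlying $1$-category of $\cA$ is $1$-dualizable, equivalently fully dualizable, in $\cat{KarCat}_\bC$) and \emph{smoothness} (the multiplication $m:\cA^e \to \cA$ admits an $\cA^e$-linear splitting, i.e.\ $\cA$ is a condensation algebra over $\Sigma\cat{Vec}_\bC^\fd$).

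Properness, by Corollary~4.2.4 of \cite{GJFcond}, places $\cA$ in $\Sigma\cat{Vec}_\bC^\fd$, the $2$-category of finite semisimple $\bC$-linear categories. Unpacked, this recovers exactly the non-rigidity content of the multifusion axioms: $\bC$-linear, additive, Karoubi-complete, semisimple, with finitely many simple isomorphism classes. It then remains to match smoothness, under the proper hypothesis, with rigidity. For the direction rigid~$\Rightarrow$~smooth, I would invoke the main theorem of \cite{DSPS}, that every multifusion category over $\bC$ is separable, which delivers the required $\cA^e$-bimodule splitting. For the converse direction, I would evaluate the splitting $\Delta:\cA\to\cA^e$ on the unit, decompose the resulting object of $\cA\boxtimes\cA^{\op}$ into simples using semisimplicity, and read off (co)evaluation morphisms from the Frobenius relations implicit in the condensation-algebra structure of \cite{GJFcond}; these exhibit left and right duals for each simple object, and rigidity of all of $\cA$ then follows by additivity and Karoubi completeness.

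The principal obstacle I anticipate is the smooth~$\Rightarrow$~rigid step: one must extract \emph{object-level} duals in $\cA$ from the abstract bimodule splitting of multiplication. However, once properness is in force and all ambient categories are finite semisimple, this is essentially a bookkeeping exercise in the Frobenius calculus of condensation algebras, so I expect the whole proof to amount to a translation between equivalent characterizations rather than to any genuinely new calculation.
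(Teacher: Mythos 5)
Your overall architecture coincides with the paper's: both arguments run the first proof of Theorem~\ref{thm.dualizability} at $n=1$ to split $2$-dualizability into properness plus smoothness, both identify properness with finite semisimplicity (the paper cites \cite{MR1670122} where you cite Corollary~4.2.4 of \cite{GJFcond}; either works), and both dispose of the direction rigid $\Rightarrow$ fully dualizable by quoting the separability of multifusion categories in characteristic zero from \cite{DSPS}. The one genuine divergence is the converse step, smooth $\Rightarrow$ rigid, which is where all the content of the corollary lives. The paper handles it by citation: Theorem~3.1.7 of \cite{GJFcond} lets one take the splitting to be the right adjoint $\Delta = m^R$, and the assertion that $m^R$ is a map of $\cA$-bimodules is already known to be equivalent to rigidity of $\cA$ (Definition-Proposition~1.3 of \cite{1804.07538}, or Appendix~D of \cite{MR3381473}). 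You propose instead to reprove that equivalence by hand, decomposing $\Delta(\mathbf 1)=\bigoplus_\alpha X_\alpha\boxtimes Y_\alpha$ and reading off (co)evaluations. That is in fact the standard proof of the cited result, so your route is legitimate, but it buys you nothing over the citation and costs you the work the citation encapsulates.

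On that work: calling it ``bookkeeping'' undersells the two points that actually carry the argument. First, the unit and counit of the condensation only produce pairings between the particular objects $X_\alpha$ and $Y_\alpha$ occurring in $\Delta(\mathbf 1)$; to exhibit a dual for an arbitrary simple $Z$ you must use the full $\cA^e$-linearity of $\Delta$, i.e.\ the natural isomorphisms $(Z\boxtimes\mathbf 1)\otimes\Delta(\mathbf 1)\cong\Delta(Z)\cong(\mathbf 1\boxtimes Z)\otimes\Delta(\mathbf 1)$, which let $Z$ be absorbed into the $X_\alpha$ on one side and expelled from the $Y_\alpha$ on the other. Second, the snake identities do not follow from the Frobenius and specialness conditions alone; they require combining those conditions with the same bimodule naturality. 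If you write out both points you have reconstructed Definition-Proposition~1.3 of \cite{1804.07538} and the corollary follows; if you omit them you have only dualized the simples in the support of $\Delta(\mathbf 1)$.
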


\begin{proof}
 Since $\bC$ is of characteristic zero, every multifusion category over $\bC$ is separable, and full dualizability follows from \cite{DSPS}.
 
 In the other direction, first note that in order for $\cA$ to be ``proper'' in the sense that the underlying object of $\cA$ is dualizable in $\cat{KarCat}_\bC$, it must be semisimple with finitely many isomorphism classes of simples \cite{MR1670122}. As for smoothness,
  Theorem 3.1.7 of \cite{GJFcond} implies that the splitting $\Delta$ of $m : \cA \boxtimes \cA \to \cA$ in the  first proof of Theorem~\ref{thm.dualizability} may be chosen to be the right adjoint $1$-morphism $\Delta = m^R$. That $m^R$ is a map of $\cA$-bimodules is equivalent to the statement that every object in $\cA$ has duals; c.f.\ Definition-Proposition 1.3 of \cite{1804.07538} or Appendix D of \cite{MR3381473}.
\end{proof}

This establishes Conjecture 3.3.5 of \cite{GJFcond}. It also justifies:
\begin{definition}\label{defn.multifusion}
  A monoidal $n$-category is \define{multifusion} if it is additive, $\bC$-linear, Karoubi-complete, and fully dualizable in $\cat{Mor}_1(\cat{KarCat}_\bC)$.
\end{definition}

\begin{remark}\label{remark.multifusion}
Corollary~\ref{cor.multifusion} and Definition~\ref{defn.multifusion} obscure an important subtlety. Over fields of positive characteristic, there are \define{inseparable} multifusion categories,
and what we are calling ``multifusion $n$-categories'' should really be called ``separable multifusion $n$-categories.''

For an inseparable fusion 1-category $\cA$, the multiplication $m : \cA \boxtimes \cA \to \cA$ and its right adjoint $m^R$ are not the components of a condensation $\cA \boxtimes \cA \condense \cA$, and $\cA$ is not $2$-dualizable in $\cat{Mor}_1(\cat{KarCat}_\bC)$. However, such an $\cA$ is $2$-dualizable in a closely related $3$-category $\cat{Mor}_1(\cat{Rex}_\bC)$, which is the $3$-category used in \cite{DSPS}. The difference between $\cat{Rex}_\bC$ and $\cat{KarCat}_\bC$ is that in the former, but not the latter, all categories are required to have cokernels, and all functors are required to preserve cokernels. The upshot is that the monoidal structure on $\cat{Rex}_\bC$ is not merely the Karoubi completion of the na\"ive tensor product, but rather completes also for cokernels. This change affects, for example, the composition of bimodules, and so throws off the ``smoothness'' part of the first proof of Theorem~\ref{thm.dualizability}. From the perspective of the second proof, it would require working with a functor from $\cat{Mor}_1(\cat{Rex}_\bC)$ to some ``$2\cat{Rex}_\bC$,'' and we would not have access to Theorem 4.1.1 of \cite{GJFcond}.
\end{remark}

\begin{lemma}\label{lemma.rigid}
  A multifusion $n$-category $\cA$ is \define{rigid}: every object admits left and right duals, and every $k$-morphism for $k < n$ admits left and right adjoints
\end{lemma}

\begin{proof}
  Since $\cA$ is unital, it arises as the endomorphisms of its own rank-1 free module. This can be said more categorically: consider the bimodule $_\cA \cA_{\mathbf 1}$ as a 1-morphism in the $(n{+}2)$-category $\Mor_1(n\cat{KarCat}_\bC)$; then $\cA \cong \End(_\cA \cA_{\mathbf 1})$.
  
  For any monoidal $n$-category, $_\cA \cA_{\mathbf 1}$ automatically has an adjoint over $\cA$. The ``properness'' identified in the first proof of Theorem~\ref{thm.dualizability} says that it is also adjunctible over  $\mathbf 1$.  
   Thus, like $\cA$ itself, the rank-1 free module $_\cA \cA_{\mathbf 1}$ lives inside the fully-dualizable sub-$(n{+}2)$-category $\Sigma^{n+2}\bC \subset \Mor_1(n\cat{KarCat}_\bC)$.
   
   But then $\cA$ is the $n$-category of endomorphisms of a 1-morphism in an $(n{+}2)$-category with all duals.
\end{proof}

\begin{proposition}\label{prop.subfusion}
  If $\cA$ is multifusion in the sense of Definition~\ref{defn.multifusion}, and if $\cA' \subset \cA$ is a Karoubi complete and additive full rigid monoidal subcategory (i.e.\ $\cA'$ is closed under multiplication and taking duals), then $\cA'$ is multifusion in the sense of Definition~\ref{defn.multifusion}.
\end{proposition}

\begin{proof}
  Independently of the monoidal structure, suppose that $\cA \in \Sigma^{n+1}\bC \subset n\cat{KarCat}_\bC$ is dualizable. Then endomorphism categories in $\cA$ are also dualizable. It follows that for every object $X \in \cA$, the $\bC$-algebra of endo-$n$-morphisms $\Omega^{n-1} \End_\cA(X)$ is finite-dimensional and semisimple, and so $X$ decomposes (canonically, provided $n\geq 2$, since then $\Omega^{n-1} \End_\cA(X)$ is commutative) as a direct sum of simple objects. Together with Lemma~\ref{lemma.technical}, this implies that any Karoubi complete and additive full sub-$n$-category $\cA' \subset \cA$ is a direct summand (compare \cite[Remark~1.2.23]{Reutter2018}), and hence dualizable. In the case of the Proposition, this establishes that $\cA'$ is proper.
  
  The more interesting part is to establish that $\cA'$ is smooth, i.e.\ that the multiplication map $m' : \cA' \boxtimes \cA' \to \cA'$ extends to an $\cA'$-bilinear condensation. Of course, this $m'$ is the restriction of the multiplication map $m : \cA \boxtimes \cA \to \cA$. By Lemma~\ref{lemma.rigid}, $\cA$ is rigid, and so, as in the proof of Corollary~\ref{cor.multifusion}, it has an $\cA$-bilinear adjoint $m^R : \cA \to \cA \boxtimes \cA$, and we may take this adjoint as part of the condensation $\cA \boxtimes \cA \condense \cA$. On the other hand, since $\cA'$ by assumption also has duals, $m'$ has an $\cA'$-bilinear adjoint $(m')^R$, which (since $\cA' \subset \cA$ is a direct summand) must be simply the restriction of $m^R$.
  
  Finally, $m$ and $m^R$ are the downward and upward arrows in a condensation $\cA \boxtimes \cA \condense \cA$, so that there is a condensation $mm^R \condense \id_\cA$, which consists of some natural transformations between endofunctors of $\cA$ (and higher natural transformations between those natural transformations). Restrict these data to $\cA'$. This is possible because $\cA'$ is a full subcategory of $\cA$, and the functors themselves do factor through $\cA'$. It follows that $m'$ and $(m')^R$ extend to a condensation $\cA' \boxtimes \cA' \condense \cA'$. It is $\cA'$-bilinear simply by restricting the bilinearity data from $\cA$ (and using the fullness of $\cA' \subset \cA$ to confirm that the restrictions exist).
\end{proof}

\subsection{Centres and invertibility}\label{sec.centre}

Any monoidal $n$-category $\cA$ has a \define{(Drinfeld) centre} $\cZ(\cA)$, which measures the ability of objects in $\cA$ to commute past other objects. 
Recall from the first proof of Theorem~\ref{thm.dualizability} the \define{enveloping algebra} $\cA^e = \cA \boxtimes \cA^\op$, which acts canonically on $\cA$. The shortest definition of $\cZ(\cA)$ is:
$$ \cZ(\cA) = \End_{\Mod(\cA^e)}(\cA).$$

\begin{remark}
  The universal property of Karoubi completion guarantees that Karoubi-complete $\cA^e$-modules are equivalent to Karoubi-complete modules for the ``na\"ive enveloping algebra'' $\cA \otimes \cA^\op$. In particular, $\cZ(\cA)$ doesn't change if computed in the Karoubi-completed or incomplete worlds. One can see this another way. Endomorphism categories can be computed as limits. 
  Write $n\cat{Cat}$ for the $(n{+}1)$-category of all $n$-categories.
  Since the forgetful functor $n\cat{KarCat}_\bC \subset n\cat{Cat}$ is a right adjoint, it preserves limits, and so if $\cA$ is a monoidal object in $n\cat{KarCat}_\bC$, then $\cZ(\cA)$ can be computed in plain $n\cat{Cat}$ and the result will be automatically Karoubi complete, additive, and linear.
\end{remark}

\begin{remark}\label{remark.centres}
Suppose that $\cA$ is in fact multifusion. 
Together with the Cobordism Hypothesis,
Theorem~\ref{thm.dualizability} implies that there is a (unique) framed fully extended $(n{+}2)$-dimensional TQFT with values in $\cat{Mor}_1(n\cat{KarCat}_\bC)$ whose value on a point is $\cA$. We will write this TQFT as $M \mapsto \int_M \cA$. Let $S^1_b = \partial D^2$ denote the $1$-dimensional circle with boundary framing. There is a reasonably well-known identification
$$ \cZ(\cA) \cong \int_{S^1_{b}} \cA.$$
Indeed, consider $\cA$ as a $\cA^e$-module, and its ``$\cA^e$-linear dual'' module $\cA^* :=  \hom_{\cA^e}(\cA, \cA^e)$.  Then $\int_{S^1_{b}}$ can be presented as the tensor product $\cA^* \otimes_{\cA^e} \cA$. But the dualizability of $\cA \in \Mor_1(n\cat{KarCat}_\bC)$ allows us to identify this tensor product with $\hom_{\cA^e}(\cA, \cA) = \cZ(\cA)$. 
\end{remark}

Unpacking the definition,
an object of $\cZ(\cA)$ is an object $X \in \ob(\cA)$ together with a natural-in-$Y$ isomorphism $x_Y : X \otimes Y \isom Y \otimes X$, which must furthermore be compatible with multiplication in the $Y$-variable, so that for example $x_{Y_1 \otimes Y_2} \cong (\id_{Y_1} \otimes x_{Y_2}) \circ (x_{Y_1} \otimes \id_{Y_2})$.  When $\cA$ is a $(\geq 2)$-category, ``naturality in $Y$'' is extra data on the isomorphism $x_{(-)}$, not just a property. For example, given a $1$-morphism $y : Y \to Y'$ in~$\cA$, the naturality data includes the data of a $2$-isomorphism interpolating between $x_{Y'} \circ (\id_X \otimes y)$ and $(y \otimes \id_{X}) \circ x_{Y}$; similarly, for each $2$-morphism, ``naturality in $Y$'' requires a $3$-isomorphism; etc. Since the objects of $\cZ(\cA)$ are pairs $(X, x_{(-)})$, similarly the morphisms are pairs. For example, a $1$-morphism $(X, x_{(-)}) \to (X', x'_{(-)})$ consists of a $1$-morphism $\Xi : X \to X'$ together with an isomorphism $\xi_{(-)}$ between the natural isomorphisms $(\id_{(-)} \otimes \Xi) \circ x_{(-)}$ and $x'_{(-)} \circ (\Xi \otimes \id_{(-)})$. Again $\xi_{(-)}$ consists of much data: for each $k$-morphism $y$, we ask for a $(k{+}2)$-isomorphism $\xi_y$. In general, an $m$-morphism in $\cZ(\cA)$ consists of an $m$-morphism in $\cA$  and, for each $k$-morphism, an $(m{+}k{+}1)$-isomorphism.

As an example of an object in $\cZ(\cA)$, we may take $X$ to be the unit object $\mathbf{1}_\cA$ and $x_{Y}$ to be the identity map $\id_Y : \mathbf{1}_\cA \otimes Y \isom Y \otimes \mathbf{1}_\cA$ (or, more precisely, an appropriate composition of unitor and associator data). We will be interested in the case where this is the ``only'' object of $\cZ(\cA)$. It cannot be truly the only object, since $\cZ(\cA)$ is additive and Karoubi complete: by taking direct sums and splitting higher categorical idempotents, the trivial object $(\mathbf{1}_\cA, \id_{(-)}) \in \cZ(\cA)$ closes to a copy of $\Sigma^n \bC$ inside $\cZ(\cA)$. This copy of $\Sigma^n\bC$ should be thought of as the ``multiples of the identity.''

\begin{definition}
  A Karoubi complete, additive, and $\bC$-linear monoidal $n$-category $\cA$ \define{has trivial centre} when the map $\Sigma^n\bC \to \cZ(\cA)$ picked out by the unit object is an equivalence.
\end{definition}

\begin{theorem}\label{thm.invertibility}
  An object of $\cat{Mor}_1(n\cat{KarCat}_\bC)$ is invertible if and only if it is multifusion with trivial centre.
\end{theorem}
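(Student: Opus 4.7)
Invertibility of $\cA$ implies full dualizability, hence $\cA$ is multifusion by Definition~\ref{defn.multifusion}. In any symmetric monoidal higher category, a $1$-dualizable object is invertible exactly when its evaluation morphism is an equivalence, so invertibility of $\cA$ means precisely $\cA \boxtimes \cA^{\op} \isom \mathbf{1}$ in $\cat{Mor}_1(n\cat{KarCat}_\bC)$. Transporting the regular $\cA^e$-module $\cA$ across this equivalence produces the unit $\mathbf{1}$-module $\Sigma^n\bC \in n\cat{KarCat}_\bC$, so
$$\cZ(\cA) = \End_{\cA^e}(\cA) \simeq \End_{n\cat{KarCat}_\bC}(\Sigma^n\bC) = \Sigma^n\bC,$$
i.e.\ trivial centre. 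A more conceptual argument is to invoke Remark~\ref{remark.centres} to identify $\cZ(\cA) \simeq \int_{S^1_b}\cA$ and note that any invertible $(n{+}2)$-dimensional framed TQFT trivializes on the null-bordant manifold $S^1_b = \partial D^2_b$.

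\textbf{Reverse direction.} Suppose $\cA$ is multifusion with trivial centre. Theorem~\ref{thm.dualizability} gives full dualizability in $\cat{Mor}_1(n\cat{KarCat}_\bC)$, and the task is to upgrade this to invertibility, i.e.\ to promote the evaluation bimodule $\cA : \cA^e \to \mathbf{1}$ to an equivalence. My plan is to establish a higher-categorical Morita-theoretic statement, valid for any fully dualizable $\cA$: the regular $\cA^e$-module $\cA$ realizes a Morita equivalence (between algebra objects in $n\cat{KarCat}_\bC$) between the enveloping algebra $\cA^e$ and the centre $\cZ(\cA)$. Granting this, the triviality hypothesis $\cZ(\cA) \simeq \Sigma^n\bC = \mathbf{1}$ collapses $\cA^e \simeq \mathbf{1}$ in $\cat{Mor}_1(n\cat{KarCat}_\bC)$, which is exactly the invertibility of $\cA$.

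To verify the Morita equivalence I will need both snakes. Writing $\cA^* = \hom_{\cA^e}(\cA, \cA^e)$ for the right $\cA^e$-adjoint bimodule (which exists because $\cA$ is proper as a $\cA^e$-module, from the first proof of Theorem~\ref{thm.dualizability}), the snake $\cA^* \otimes_{\cA^e} \cA \simeq \End_{\cA^e}(\cA) = \cZ(\cA)$ is automatic from properness. The other snake $\cA \otimes_{\cZ(\cA)} \cA^* \simeq \cA^e$ is the main obstacle: it is the higher categorical incarnation of the classical double centralizer (Skolem--Noether/Jacobson density) theorem. I would attack it by leveraging the ``smoothness'' output of the first proof of Theorem~\ref{thm.dualizability}---the condensation $\cA^e \condense \cA$ of $\cA^e$-bimodules splitting the multiplication---to manufacture the required inverse. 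A possibly cleaner route, in the spirit of the second proof of Theorem~\ref{thm.dualizability}, would pass through $\Sigma(-)$ into $(n{+}1)\cat{KarCat}_\bC$ and invoke the Bestiary Hypothesis together with Theorem~4.1.1 of \cite{GJFcond} to argue that, under trivial centre, $\Sigma\cA$ must coincide with the unit object $\Sigma^{n+1}\bC$, which in turn gives invertibility of $\cA$ in $\cat{Mor}_1(n\cat{KarCat}_\bC)$.
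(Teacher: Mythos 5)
Your forward direction is correct and is essentially the paper's argument: the paper phrases it as ``an invertible object gives a TQFT with invertible values, so $\Sigma^n\bC = \int_\emptyset\cA \to \int_{S^1_b}\cA = \cZ(\cA)$ is an equivalence,'' which is exactly your ``more conceptual'' variant; your bimodule-transport version is an equivalent unpacking.

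The reverse direction, however, has a genuine gap. The intermediate claim you propose to ``grant'' --- that for \emph{any} fully dualizable $\cA$ the regular bimodule realizes a Morita equivalence between $\cA^e = \cA\boxtimes\cA^\op$ and $\cZ(\cA)$ --- is false. Already at $n=0$, take $A = \bC\oplus\bC$: then $A^e \cong \bC^4$ has four simple modules while $Z(A)\cong\bC^2$ has two, so they are not Morita equivalent, and indeed $A$ fails to be a generator of $A^e$-modules (the two ``off-diagonal'' simples do not occur in it). The classical double centralizer theorem gives $A\otimes_{Z(A)}A^\op \cong \End_{Z(A)}(A)$, with the tensor taken over the centre, not over the base field; it collapses to the statement you want only when $Z(A)=\bC$, i.e.\ only under the trivial-centre hypothesis, in which case the ``second snake'' $\cA\otimes_{\cZ(\cA)}\cA^*\simeq\cA^e$ \emph{is} the theorem rather than a lemma from which the theorem follows. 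You correctly identify this snake as the main obstacle, but ``leveraging the smoothness condensation to manufacture the inverse'' is a plan, not a proof, and it is precisely the step where the trivial-centre hypothesis must be used in an essential way. The Bestiary route fares no better: knowing $\bigl((n{+}1)\cat{KarCat}_\bC\bigr)^{\fd}\simeq\Sigma^{n+1}\bC$ does not force $\Sigma\cA$ to be the \emph{unit} (that subcategory contains many non-invertible objects), so you would again need an argument that trivial centre implies invertibility of $\Sigma\cA$. The paper closes this gap by a different mechanism: since $\cA$ is at least $3$-dualizable, the result of Schommer-Pries that ``tori detect invertibility'' (essentially the Freed--Teleman $S^1$-detection argument) shows that invertibility of $\int_{S^1_b}\cA = \cZ(\cA) = \Sigma^n\bC$ already forces invertibility of $\cA$. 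If you want to pursue your double-centralizer route instead, you must actually construct the equivalence $\cA\otimes_{\cZ(\cA)}\cA^*\isom\cA^e$ from the trivial-centre hypothesis, not assume a general Morita equivalence that does not hold.
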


Related results can be found in \cite{BJSS2020}, which appeared on arXiv after the first version of this paper was posted.

\begin{proof}
The $n=0$ case is the well-known fact that the Morita-invertible algebras, over any base field, are precisely the finite-dimensional separable algebras with trivial centre. We henceforth assume $n\geq 1$.

  Suppose $\cA \in \cat{Mor}_1(n\cat{KarCat}_\bC)$ is invertible. Then it is dualizable and hence multifusion.
  Write $M \mapsto \int_M \cA$ for the corresponding $(n{+}2)$-dimensional fully extended TQFT with values in $\cat{Mor}_1(n\cat{KarCat}_\bC)$. Since $\cA$ is invertible, this TQFT takes invertible values. In particular, the map
  $$ \Sigma^n\bC = \int_{\emptyset} \cA \overset{\int_{D^2}\cA}\longto \int_{S^1_b}\cA = \cZ(\cA)$$
  is an equivalence. The first equality is from the definition of TQFT, and the second is from Remark~\ref{remark.centres}. 
  This establishes the ``only if'' direction of the Theorem.
  
  For the ``if'' direction, since $\cA$ is multifusion, it is (at least) $(n{+}2)$-dualizable in $\Mor_1(n\cat{KarCat}_\bC)$, and, since $n+2 \geq 3$, we can apply the main result of \cite{MR3811769}, which implies that $\cA$ is invertible if $\int_{S^1_{b}} \cA$ is.
  But $\int_{S^1_b}\cA = \cZ(\cA) = \Sigma^n\bC$ is trivial, and so invertible.
In fact, this application is substantially easier than the full strength of \cite{MR3811769}, and is essentially due to unpublished work by D.\ Freed and C.\ Teleman described in \cite{FreedAspects}. Careful analysis of \cite{MR3811769} shows also that this application also does not require the full strength of the Cobordism Hypothesis: one can express the entire argument simply in terms of dualizability data.
\end{proof}

\section{Justifying our definition of topological order} \label{sec.justification}

The goal of this section is to justify Definition~\ref{defn.main}, now that we have defined the constituent terms. Unpacked, that definition identifies $(n{+}1)$-dimensional topological orders with monoidal $n$-categories which are additive, $\bC$-linear, Karoubi complete, dualizable, and have trivial centre. We first list carefully the physical ans\"atze contributing to these conditions, arguing for their correctness but also commenting on the challenges that each assumption might face. We then argue that these conditions are complete, in the sense that no further conditions are needed in order to determine a topological order; this completeness justifies the use of the term ``ansatz'' rather than merely ``assumption.''

\subsection{Correctness}\label{subsec.correct}

Definition~\ref{defn.main} is based on the following physical ans\"atze:
\begin{enumerate}\setcounter{enumi}{-1}
  \item \label{ansatz.heisenberg} A {topological order} determines and is determined by its ``algebra'' $\cA$ of low-energy topological operators.
  \item \label{ansatz.category} The ``algebra'' $\cA$ in an $(n{+}1)$-dimensional topological order is a monoidal $n$-category. The $(k{+}1)$-monoidal $(n{-}k)$-category of $(n{-}k)$-dimensional operators is $\Omega^k\cA$. Furthermore, $\cA$ is $\bC$-linear, additive, and Karoubi complete.
  \item \label{ansatz.stacking} The stacking of topological orders is described by the Karoubi-completed tensor product of their $n$-categories of operators.
  \item \label{ansatz.dualizable} The algebra of operators in an $(n{+}1)$-dimensional topological order is (at least) $2$-dualizable in $\cat{Mor}_1(n\cat{KarCat}_\bC)$.
  \item \label{ansatz.remote} The algebra of operators in a topological order has trivial centre.
\end{enumerate}
Together, these ans\"atze compile to Definition~\ref{defn.main}. 
We now discuss each ansatz in detail.

It is a fundamental principle of physics, encoded for example in the ``Heisenberg picture'' of quantum mechanics, that a physical system is fully determined, up to equivalence, by its algebra of possible operators/observations --- all physical information should be reconstructible from the algebras, and information about, say, the overall phase of a vector in a Hilbert space, or  the overall normalization of a path integral, is deemed ``unphysical.'' Ansatz~\ref{ansatz.heisenberg} is almost simply an application of this principle to phases of condensed matter. At issue is the restriction just to low-energy topological operators. If we were to include microscopic operators within our algebra, then our algebra would encode not just the low-energy phase of matter, but also its high-energy realization. On the other hand, it is not obvious whether a phase of matter is fully determined by just its low-energy observables. We will assume that it is, but this is an assumption, not a theorem.

Ansatz~\ref{ansatz.category}, declaring that the operators in an $(n{+}1)$-dimensional topological order form a monoidal $n$-category, is largely a declaration of what we mean by ``topological'' in words like ``topological order'': topological operators, by their very nature, may be moved continuously without changing their behaviour, and so may be fused, leading to the categorical structure. There is one subtlety in this logic: it presumes a locality that is not completely obvious in, say, lattice Hamiltonian constructions. Specifically, it presumes that inserting an operator in one region of spacetime does not change the choices of possible operator insertions in some distant region. This locality is surely part of what we mean by ``operator,'' but it might be in conflict with, say, Ansatz~\ref{ansatz.heisenberg}. 
The second part of Ansatz~\ref{ansatz.category}, asserting that the $n$-category $\cA$ of operators is (additive and $\bC$-linear and) Karoubi complete, follows from a version of the construction outlined in \S2.4 of \cite{GJFcond}, which explains how to use a mesoscopic lattice  to build categorical condensations in $\cA$.

Ansatz~\ref{ansatz.stacking} is implicit in Definition~\ref{defn.main}: there are natural symmetric monoidal structures on both $\{${multifusion categories with trivial centre}$\}$ and $\{${topological orders}$\}$, given by Karoubi-completed tensor product and stacking, respectively, and it is reasonable to assume they match. But this assumption, too, could be challenged: perhaps there are operators available in the microscopic models of the two systems which cannot be seen in the low-energy topological limit, but nevertheless combine after stacking into an operator which does have a topological manifestation.

Ansatz~\ref{ansatz.stacking} is needed to justify the dualizability asserted in Ansatz~\ref{ansatz.dualizable}. Take a topological order, described by a monoidal $n$-category $\cA$, and consider ``folding'' it back onto itself.
The resulting ensemble contains two sheets, one described by $\cA$ and the other by the opposite monoidal $n$-category $\cA^\op$, and according to Ansatz~\ref{ansatz.stacking}, this layering is exactly $\cA^e = \cA \boxtimes \cA^\op$. Furthermore, this two-sheet system has a ``fold'' boundary condition which is automatically gapped topological. It determines an $\cA^e$-module (equivalently, a $\cA^e$--$\mathbf 1$ bimodule) via Remark~\ref{remark.stackingdescription}, which is nothing but the canonical module~$\cA$.

Consider now folding our topological order on both the left and the right, so as to produce a zig-zag: 
$$
\begin{tikzpicture}
  \draw[thick] 
   (0,0)
    -- ++(1.75,0) 
    .. controls +(.35,0) and +(0,-.02) ..
    ++(.35,.1) 
    .. controls +(0,.02) and +(.35,0) ..
    ++(-.35,.1)
    -- ++(-1,0)
    .. controls +(-.35,0) and +(0,-.02) .. 
    ++(-.35,.1) 
    .. controls +(0,.02) and +(-.35,0) ..
    ++(.35,.1)
    -- ++(1.75,0)
    -- ++(0,2)
  ;
  \draw[thick] 
   (0,0)
    -- ++(0,2)
    -- ++(1.75,0) 
    .. controls +(.35,0) and +(0,-.02) ..
    ++(.35,.1) 
    .. controls +(0,.02) and +(.35,0) ..
    ++(-.35,.1)
    -- ++(-1,0)
    .. controls +(-.35,0) and +(0,-.02) .. 
    ++(-.35,.1) 
    .. controls +(0,.02) and +(-.35,0) ..
    ++(.35,.1)
    -- ++(1.75,0)
  ;
  \draw[fill, opacity=.25, gray, thick]
  (0,0)
    -- ++(1.75,0) 
    .. controls +(.35,0) and +(0,-.02) ..
    ++(.35,.1) coordinate (a)
  -- ++(0,2)
    .. controls +(0,-.02)  and +(.35,0) .. 
    ++(-.35,-.1)
  -- ++(-1.75,0)
  -- ++(0,-2);
  \draw[fill, opacity=.25, gray, thick]
  (a)
    .. controls +(0,.02) and +(.35,0) ..
    ++(-.35,.1)
    -- ++(-1,0)
    .. controls +(-.35,0) and +(0,-.02) .. 
    ++(-.35,.1) coordinate (b)
  -- ++(0,2)
    .. controls +(0,-.02) and +(-.35,0) ..
    ++(.35,-.1)
    -- ++(1,0)
    .. controls +(.35,0) and +(0,.02) ..
    ++(.35,-.1)
  -- (a);
  \draw[fill, opacity=.25, gray, thick]
  (b)
  .. controls +(0,.02) and +(-.35,0) ..
    ++(.35,.1)
    -- ++(1.75,0)
  -- ++(0,2)
  -- ++(-1.75,0)
  .. controls +(-.35,0) and +(0,.02)..
  ++(-.35,-.1)
  -- (b);
\end{tikzpicture}
$$
This zig-zag may be continuously deformed to a flat, unfolded configuration, without a kink. This is the physical reason for the $1$-dualizability of $\cA$, which is automatic for all objects in $\cat{Mor}_1(n\cat{KarCat}_\bC)$. 
What's special about topological orders is that the ``fold'' boundary conditions are topological in the remaining $n$ dimensions, and so may themselves be folded.
The $2$-dualizability of $\cA$ asserted in Ansatz~\ref{ansatz.dualizable} follows from the ability to arrange, and then continuously unkink, configurations like:
$$ \begin{tikzpicture}
  \draw[thick] 
   (0,0)
    -- ++(1.75,0) 
    .. controls +(.35,0) and +(0,-.02) ..
    ++(.35,.1) 
    .. controls +(0,.02) and +(.35,0) ..
    ++(-.35,.1)
    -- ++(-1.5,0) -- ++(0,2)
  ;
  \draw[thick] 
   (0,0) -- ++(0,2)
    -- ++(.75,0) 
    .. controls +(.35,0) and +(0,-.02) ..
    ++(.35,.1) 
    .. controls +(0,.02) and +(.35,0) ..
    ++(-.35,.1)
    -- ++(-.5,0)
  ;
  \draw[fill, opacity=.25, gray, thick]
  (0,0)
    -- ++(1.75,0) 
    .. controls +(.35,0) and +(0,-.02) ..
    ++(.35,.1)
    -- ++(0,1.25)
    .. controls +(0,.25) and +(.15,0) ..
    ++(-.25,.35)
    .. controls +(-.15,0) and +(0,.25) ..
    ++(-.25,-.35)
    -- ++(0,-.25)
    .. controls +(0,-.25) and +(.15,0) ..
    ++(-.25,-.35)
    .. controls +(-.15,0) and +(0,-.25) ..
    ++(-.25,.35)
    -- ++(0,1)
    .. controls +(0,-.02) and +(.35,0) ..
    ++(-.35,-.1)
    -- ++(-.75,0)
  ;
  \draw[fill, opacity=.25, gray, thick]
  (.25,.2)
    -- ++(1.5,0) 
    .. controls +(.35,0) and +(0,.02) ..
    ++(.35,-.1)
    -- ++(0,1.25)
    .. controls +(0,.25) and +(.15,0) ..
    ++(-.25,.35)
    .. controls +(-.15,0) and +(0,.25) ..
    ++(-.25,-.35)
    -- ++(0,-.25)
    .. controls +(0,-.25) and +(.15,0) ..
    ++(-.25,-.35)
    .. controls +(-.15,0) and +(0,-.25) ..
    ++(-.25,.35)
    -- ++(0,1)
    .. controls +(0,.02) and +(.35,0) ..
    ++(-.35,.1)
    -- ++(-.5,0)
  ;
\end{tikzpicture} $$
In a microscopic lattice description of such a configuration, the microscopic lattice itself may need to twist around.

The final ansatz requiring justification is Ansatz~\ref{ansatz.remote}, which is a formulation of the principle  of
\define{remote detectability} emphasized in \cite{1405.5858,KWZ1,KWZ2}. (This axiomatization of remote detectability builds on ideas from \cite{MR2942952,PhysRevX.3.021009}, and a similar axiom also underlies the analysis in \cite{MR3063919}.) Suppose that $\cA$ is the monoidal $n$-category of extended operators in an $(n{+}1)$-dimensional topological order, and consider the configuration in which some nontrivial operator $X \in \cA$ is inserted. How might this insertion be detected? It has, of course, some microscopic effect, but if it is to be detectable in the low-energy topological limit, then there must be some other operator $Y \in \cA$ implementing the detection. In terms of the monoidal category $\cA$, to say that ``$Y$ detects $X$'' is to say that $Y$ and $X$ have nontrivial commutator, in the higher-categorical sense axiomatized by the Drinfeld centre $\cZ(\cA)$. Remote detectability is the assertion that the only ``invisible'' operators are  multiples of the identity operator. In the Karoubi complete world, ``multiples'' include all objects that can be formed by categorical condensation, hence the condition that $\cZ(\cA) = \Sigma^n\bC$ (and not, say, $\{\id\}$). But, as with the other ans\"atze, Ansatz~\ref{ansatz.remote} requires the physical assumption that all low-energy behaviour of a topological order is detected/determined by low-energy topological operators.

\subsection{Completeness}\label{subsec.complete}

To argue that Definition~\ref{defn.main} is complete, we must construct, for each multifusion $n$-category $\cA$ with trivial centre, an $(n{+}1)$-dimensional ``physical topological order'' whose $n$-category of operators is equivalent to $\cA$, and furthermore we must show that ``physical topological orders'' with equivalent multifusion $n$-categories are ``physically equivalent,'' as required by Ansatz~\ref{ansatz.heisenberg} above.

Such an ``equivalence'' cannot be the equivalence contemplated in the study of ``gapped phases of matter.'' 
Heuristically, a \define{gapped matter system} is a system of quantum matter with finitely many ground states and an energy gap separating those ground states from the next excitation, and a \define{gapped phase of matter} is an equivalence class of gapped matter systems for continuous deformations that do not close or open the energy gap.
 The \define{trivial phase of matter} is the equivalence class that contains systems whose unique ground state factors as a product state (the product being taken over all locations of ``excitation sites'').
 This is only a heuristic definition, since we do not have a complete definition of ``system of quantum matter,'' and since there are some difficulties defining the word ``gapped'' in a fully local way;
 in particular, there are systems which are ``gapped'' for certain definitions, but have far from topological behaviour. We will simply write ``topological (phase of) matter'' for a gapped phase with topological low-energy behaviour.
 
 Although the definitions are only heuristic, much is known about topological phases of matter, and a few features can be extracted immediately. For instance, the definition presupposes that quantum matter systems have well-defined Hilbert spaces, whose ground states may or may not factor as a product state. This allows for the fascinating subject of invertible phases of matter. By definition, a topological quantum matter system $\cX$ is \define{invertible} if there is another system $\cY$ such that the stacking of $\cX$ with $\cY$ is in the trivial phase. Nontrivial invertible phases are distinguished from the trivial phase by the failure of their ground states to factor: the ground states are inherently entangled. 
 
 A feature of invertible phases is that they have no nontrivial topological  operators, of any dimension. 
 To see this, consider stacking an arbitrary $(n{+}1)$-dimensional topological quantum matter system $\cX$ with its orientation-reversal $\cX^\op$. A categorified version of the state-operator correspondence identifies the $n$-category of topological boundary conditions for the stacked system $\cX \otimes \cX^\op$ with the $n$-category of topological operators in $\cX$. But if $\cX$ is invertible, then $\cX \otimes \cX^\op$ is in the trivial phase, and so its $n$-category of topological boundary conditions is ``trivial.''
 The converse, asserting that no topological operators implies invertibility, is almost trivial for functorial TQFTs valued in $n\cat{KarCat}_\bC$. Indeed, such a TQFT is determined by its value $\cX$ on a point, and the multifusion $n$-category of operators is $\End(\cX) = \cX \boxtimes \cX^\op$. If this is trivial, then $\cX$ must have been invertible.
   
  However, the lack of a complete definition of ``topological phase of matter'' makes it hard to prove this converse rigorously in the condensed matter case. One reason to expect it is to run in reverse the argument about boundary operators in the previous paragraph. Indeed, it is generally believed that a topological phase of matter should be determined by its boundary conditions, and perhaps even by (enough information about) a single boundary condition. (See \cite{KWZ1,KWZ2} for detailed discussion and justification of this belief.) In particular, \S2.4 of \cite{GJFcond} constructs a phase $\cY$ from its boundary condition if the boundary condition extends to a categorical condensation $\{\text{trivial phase}\} \condense \cY$. When $\cY = \cX \otimes \cX^\op$, then each step of constructing such an extension involves choosing operators in $\cX$ (again by a version of state-operator correspondence), and the extension might be obstructed if there are nontrivial operators. But when $\cX$ has trivial operators, the extension exists and is essentially unique, and should provide an equivalence between $\cY$ and the trivial phase.
 
 In summary, nontrivial invertible phases are the kernel of the map $\{\text{topological phases}\} \to \{\text{algebras of operators}\}$.
 Because of this, the best that one could hope for is that topological orders, as axiomatized in Definition~\ref{defn.main}, classify a quotient:
$$ \{\text{topological orders}\} \overset?= \frac{\{\text{topological phases}\}}{\{\text{invertible phases}\}}$$
This is the ``completeness'' that we will argue for.

This quotient must be interpreted correctly. One should not merely take the commutative monoid (under stacking) of $(n{+}1)$-dimensional topological phases and quotient by its subgroup of invertible phases, because that does not take into account the higher categorical/homotopical structure of topological and invertible phases. To get the correct homotopical quotient, one must take this naive quotient and add further phases to it. Almost tautologically, $(n{+}1)$-dimensional topological phases can be identified with topological boundary conditions for the trivial $(n{+}2)$-dimensional phase. In our quotient, invertible $(n{+}2)$-dimensional phases are considered trivial. As such, their topological boundary conditions should be added to the set of $(n{+}1)$-dimensional systems under consideration.

One can make this quotient mathematically precise in the world of functorial field theory. An $(n{+}1)$-dimensional \define{framed fully extended TQFT}, with values in a symmetric monoidal $(n{+}1)$-category $\cS$, is a symmetric monoidal functor $Q : \cat{Bord}_{n+1}^{\mathrm{fr}} \to \cS$, where $\cat{Bord}_{n+1}^{\mathrm{fr}}$ is the $(n{+}1)$-category of framed cobordisms constructed in \cite{Lur09,ScheimbauerThesis}. To make contact with quantum mechanics, we require that $\Omega^n\cS = \Vect_\bC$, so that ``$\cS$ looks like $\Vect_\bC$ at the top.'' Now suppose that $\widehat\cS$ is a symmetric monoidal $(n{+}2)$-category with $\Omega\widehat\cS = \cS$. For instance, if $\cS$ is reasonable, then we could take $\widehat\cS = \cat{Mor}_1(\cS)$. Let $\mathbf 1 : \cat{Bord}_{n+1}^{\mathrm{fr}} \to \widehat\cS$ denote the constant functor with value the unit $\mathbf 1 \in \widehat\cS$, and write $\widehat \cS ^\times \subset \widehat\cS$ for the invertible subcategory of $\widehat \cS$.
 Then an \define{anomalous (framed, fully extended) TQFT} consists of a functor $\alpha : \cat{Bord}_{n+1}^{\mathrm{fr}} \to \widehat \cS^\times$ together with a symmetric monoidal lax natural transformation $Q : \mathbf 1 \Rightarrow \alpha$ of functors $\cat{Bord}_{n+1}^{\mathrm{fr}} \to \widehat\cS$. To emphasize: $\alpha$ is required to take invertible values, but $Q$ is not. The $n$-categorical definition of ``lax natural transformation'' is worked out in \cite{JFS}. The term ``anomalous TQFT'' is due to \cite{MR3165462}. The same notion, under the name ``twisted TQFT,'' is explored in \cite{MR2742432}.
 The description of anomalies in terms of invertible field theories is not special to the topological case; see for example Chapter 11 of \cite{MR3969923}.
 
To distinguish from the anomalous case, we will call nonanomalous TQFTs \define{absolute}. By definition, absolute TQFTs have well-defined $\bC$-valued partition functions, and well-defined $\Vect_\bC$-valued Hilbert spaces. Anomalous TQFTs do not. For example, if $(Q,\alpha)$ is an $(n{+}1)$-dimensional anomalous TQFT, then $\alpha(M)$, being invertible, is a one-dimensional vector space for each closed $(n{+}1)$-dimensional spacetime $M$, and so $\alpha$ determines a line bundle on the moduli space of closed spacetimes. The ``partition function'' $M \mapsto Q(M)$ is not $\bC$-valued, but rather it is valued in this line bundle. Even if this line bundle is trivializable, symmetries of $(Q,\alpha)$ might not preserve the trivialization. This is the origin of 't Hooft anomalies. Nontrivializability of $\alpha$ is also possible, and indicates a nonzero gravitational anomaly.

Theorem~7.4 of \cite{JFS} says that an anomalous TQFT $(Q,\alpha)$ together with a choice of trivialization $\alpha \cong \mathbf{1}$ is the same as an absolute TQFT. What is the space of $(Q,\alpha)$ such that $\alpha$ is abstractly trivializable, but a trivialization has not been chosen? It is surjected on by the space of absolute TQFTs: $Q \mapsto (Q,\mathbf{1})$. But suppose that $I$ is an invertible absolute TQFT. It can be used to change the trivialization of $\alpha$, and in that way provides an isomorphism $(Q,\mathbf{1}) \cong (Q\otimes I, \mathbf{1})$ of anomalous TQFTs. We find a bijection of sets:
\begin{align*}
  \{&\text{anomalous $(n{+}1)$D TQFTs} \\ & \text{with trivializable anomaly}\} 
 \\ & \qquad\qquad\qquad =
  \frac {\{\text{absolute $(n{+}1)$D TQFTs}\}} {\{\text{invertible $(n{+}1)$D TQFTs}\}}
\end{align*}
If we quotiented instead by the spectrum of all invertible TQFTs, of all dimensions, we would arrive at anomalous TQFTs with possibly nontrivializable anomaly.

\begin{remark}\label{remark.nonzero}
  For any choice of $\alpha$, there is a natural transformation $Q : \mathbf{1} \to \alpha$ which takes the value $0$ on every nonempty cobordism. A physicist presented with this ``zero TQFT'' will not be able to determine its anomaly, since zero does not change when multiplied by a phase. In particular, a physicist might reasonably declare that ``the anomaly of the zero TQFT is not well-defined.'' We will adopt the convention that \define{anomalous TQFTs} are always nonzero.
\end{remark}

The following result is essentially a corollary of Theorem~\ref{thm.invertibility}, together with some further analysis of $\cat{Mor}_1(n\cat{KarCat}_\bC)$.

\begin{theorem}\label{thm.anomalousTQFT}
  Multifusion $n$-categories with trivial centre, i.e.\ $(n{+}1)$-dimensional topological orders, are equivalent to framed fully-extended $(n{+}1)$-dimensional anomalous TQFTs valued in $\widehat\cS = \cat{Mor}_1(n\cat{KarCat}_\bC)$.
\end{theorem}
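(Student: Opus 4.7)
The proof is essentially immediate from Theorem~\ref{thm.invertibility} plus the Cobordism Hypothesis, but translating between the algebraic side and the TQFT side requires some care about the $Q$-data of an anomalous TQFT.

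I would begin by invoking Theorem~\ref{thm.invertibility} to rewrite the left-hand side: a multifusion $n$-category with trivial centre is the same datum as an invertible object of $\widehat\cS = \cat{Mor}_1(n\cat{KarCat}_\bC)$. Since invertible objects are automatically fully dualizable, the Cobordism Hypothesis \cite{BaeDol95,Lur09} applied to the invertible subcategory $\widehat\cS^\times \subset \widehat\cS$ produces a bijection between such $\cA$ and framed fully-extended invertible TQFTs $\alpha : \cat{Bord}_{n+1}^{\mathrm{fr}} \to \widehat\cS^\times$ with $\alpha(\pt) \simeq \cA$. This $\alpha$ is precisely the anomaly data of an anomalous TQFT.

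Next I would produce the corresponding lax natural transformation $Q_\cA : \mathbf 1 \Rightarrow \alpha$. On a point, the canonical choice is $Q_\cA(\pt) : \mathbf 1 \to \cA$ given by $\cA$ itself equipped with its regular right $\cA$-action; this is a $1$-morphism in $\widehat\cS$, and for $\cA$ invertible it is automatically adjunctible at every level. A relative version of the Cobordism Hypothesis, phrased for lax transformations between symmetric monoidal functors, then extends this datum to the required $Q_\cA$, yielding a bona fide anomalous TQFT $(Q_\cA, \alpha)$. Conversely, given $(Q, \alpha)$, evaluating at the point produces $\cA := \alpha(\pt) \in \widehat\cS^\times$, which by Theorem~\ref{thm.invertibility} is multifusion with trivial centre.

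The main obstacle is checking that $\cA \mapsto (Q_\cA, \alpha)$ is essentially surjective on isomorphism classes, i.e.\ that for invertible $\alpha$ the space of nonzero symmetric monoidal lax natural transformations $\mathbf 1 \Rightarrow \alpha$, modulo equivalence, is contractible. My plan is to exploit invertibility: tensoring an arbitrary anomalous TQFT $(Q, \alpha)$ with $(Q_{\cA^\op}, \alpha^{-1})$ produces an anomalous TQFT with trivializable anomaly, which by the discussion surrounding Theorem~7.4 of \cite{JFS} is the same as an absolute TQFT valued in $\widehat\cS$; the nonzero condition of Remark~\ref{remark.nonzero} together with the invertibility of $\alpha$ forces this absolute TQFT to itself be invertible, yielding a canonical trivialization and hence an equivalence $(Q, \alpha) \simeq (Q_\cA, \alpha)$. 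Making this precise requires carefully unwinding the $(n{+}2)$-categorical definitions of lax transformation and symmetric monoidal structure from \cite{JFS} in the presence of invertibility, which is the technically delicate step; the rest is a routine bookkeeping of coherences.
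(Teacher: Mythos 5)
Your forward map agrees with the paper's (take $X=\cA$ and $f=\cA$ as an $\cA$-module, and invoke Corollary~7.7 of \cite{JFS} to package this as an anomalous TQFT), but the inverse direction contains a genuine error. You claim that for a fixed invertible anomaly $\alpha$ the space of nonzero lax transformations $Q:\mathbf 1\Rightarrow\alpha$, modulo equivalence, is contractible, and accordingly you recover the topological order as $\cA:=\alpha(\pt)$. Both claims are false, already at $n=0$: there $\widehat\cS=\cat{Mor}_1(\Vect_\bC)$, the trivial anomaly $\alpha(\pt)=\bC$ admits the inequivalent boundary data $Q(\pt)=\bC^m$ for every $m\geq 1$, and these correspond to the pairwise non-isomorphic topological orders $\Mat_m(\bC)$ (different ground-state degeneracies), all of which share the same (trivial) anomaly. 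The anomaly $\alpha$ only remembers the Morita class of the topological order, so an inverse map that depends only on $\alpha(\pt)$ cannot work. Relatedly, your trivialization argument breaks at the step ``the nonzero condition together with the invertibility of $\alpha$ forces this absolute TQFT to itself be invertible'': after tensoring $(Q,\alpha)$ with $(Q_{\cA^\op},\alpha^{-1})$ you obtain an absolute TQFT valued in $n\cat{KarCat}_\bC$ whose point-value is (the underlying $n$-category of) $Q(\pt)$ transported along the Morita trivialization; in the $n=0$ example this is $\bC^m$, which is nonzero but not invertible for $m>1$.

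The paper's proof avoids this by recovering the topological order from the \emph{pair} $(X,f)=(\alpha(\pt),Q(\pt))$ as $\cA:=\End_X(f)$, not as $X$ itself. The key technical input is Lemma~\ref{lemma.technical}, which shows that any nonzero fully adjunctible $f:\mathbf 1\to X$ extends to a condensation $\mathbf 1\condense X$; combined with Proposition~3.1.5 of \cite{GJFcond} this exhibits $f$ as a Morita equivalence between $\End_X(f)$ and $X$, and carries the pair $(X,f)$ to $(\cA,\cA)$ with $\cA=\End_X(f)$. That is the statement of essential surjectivity you need: every anomalous TQFT is equivalent to one in the image of the forward map, with the preimage read off from the boundary datum $f$ rather than from the anomaly. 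If you want to salvage your tensoring strategy, the correct conclusion after trivializing the anomaly is that the resulting absolute TQFT is the one attached to the multifusion $n$-category $\End_X(f)$ with its trivial Morita class, not that it is invertible.
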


\begin{proof}
  The famous Cobordism Hypothesis \cite{BaeDol95,Lur09} identifies framed fully-extended $(n{+}1)$-dimensional TQFTs with $(n{+}1)$-dualizable objects. (Dualizability is at first blush a much weaker condition than the existence of a TQFT taking values on all manifolds: the Cobordism Hypothesis is a deep result about the topology of manifolds, and involves a careful analysis of families of Morse functions.)
  As an application, Corollary 7.7 of \cite{JFS} identifies anomalous $(n{+}1)$-dimensional TQFTs  valued in $\widehat\cS$ with pairs $(X, f)$, where $X \in \widehat\cS^\times$ is an invertible object, and $f : \mathbf{1} \to X$ is a (typically non-invertible) morphism in $\widehat\cS$, such that $f$ is ``$(n{+}1)$-times left adjunctible.''
  
  Which side is the left and which the right?
  In the case of $\widehat\cS = \cat{Mor}_1(n\cat{KarCat}_\bC)$, we do not need to distinguish. Indeed, by Theorem~\ref{thm.invertibility}, $X$ is a multifusion $n$-category with trivial centre; as explained in the first proof of Theorem~\ref{thm.dualizability}, the underlying $n$-category of $X$ is fully dualizable in $n\cat{KarCat}_\bC$. Furthermore, $f$ is a bimodule between the unit object $\mathbf1 = \Sigma^n\bC$ and $X$. 
  One of the two choices of left/right is the statement that $f$ is dualizable over $\mathbf 1$, which is to say that the underlying $n$-category of $f$ is dualizable (hence fully dualizable, by Corollary 4.2.4 of \cite{GJFcond}) in $n\cat{KarCat}_\bC$. The other choice is the statement that $f$ is ``adjunctible over $X$.'' But $X$ is invertible with inverse the opposite algebra $X^\op$, and so $f : \mathbf 1 \to X$ is adjunctible over $X$ if and only if the composition
  $$ X^{\op} = \mathbf 1 \otimes X^\op \overset{f \otimes \id_{X^\op}} \longto X \otimes X^{\op} \isom \mathbf 1$$
   is ``adjunctible over $\mathbf 1$.'' Note that the Morita equivalence $X \otimes X^{\op} \isom \mathbf 1$ is implemented by ``$X$ as an $X \otimes X^{\op}$-module.'' Thus this composition is simply $f$ thought of not as a (right, say) $X$-module but as a (left) $X^\op$-module.
  All together, we find that $f$ is adjunctible, on either side, exactly when its underlying $n$-category is dualizable, in which case it is fully adjunctible.

  As for the Theorem,
  if we are given a multifusion $n$-category $\cA$ with trivial centre, then we may take $X = \cA$ as an object of $\cat{Mor}_1(n\cat{KarCat}_\bC)$, and we may take $f = \cA$ considered as an $\cA$-module. This establishes a map
  $$\{\text{topological orders}\} \to \{\text{anomalous TQFTs}\}.$$
  
  For the inverse map, we are given a multifusion category $X$ with trivial centre, and a fully-adjunctible  $X$-module $f$ which is nonzero by Remark~\ref{remark.nonzero}. The problem is that $f$ is not necessarily ``$X$ as an $X$-module.'' 
  
   We claim that $f : \mathbf 1 \to X$ extends to a condensation $\mathbf 1 \condense X$. Since $X$ is invertible, this is equivalent to claiming that $f \otimes \id_{X^\op} : X^\op \to \mathbf 1$ extends to a condensation. But this follows, for example, from Lemma~\ref{lemma.technical} in the next section. (That Lemma  does not require the present Theorem for its proof.) 
   Furthermore, using Proposition~3.1.5 of \cite{GJFcond},   
   the map $g : X \to \mathbf 1$ participating in the condensation $\mathbf 1 \to X$ may be chosen to be  the right adjoint $g = f^R$.
   In the language of \cite{GJFcond}, this implies that $X$ is determined, up to equivalence, by a unital condensation monad on $\mathbf 1 \in \cat{Mor}_1(n\cat{KarCat}_\bC)$. 
  More down to earth, it says that $X$ is Morita equivalent to a separable unital algebra object $E \in \End(\mathbf 1) = n\cat{KarCat}_\bC$,
  which arises as  the image of a condensation thereon: $f \otimes f^R \condense E$.
  
  Upon unpacking further, one finds that $f^R = \hom_X(f,X)$ as an $X$-module, and that the condensation onto $E$ implements the quotient of $f \otimes f^R$ onto
  $$ E \cong f \otimes_{\End_X(f)} f^R.$$
  Furthermore, the Morita equivalence $X \simeq E$ identifies ``$f$ as an $X$-module'' with ``$f$ as an $E$-module.''
  
  Setting $\cA = \End_X(f) \cong f^R \otimes_X f$, we conclude that 
  $f$ is a Morita equivalence between $\cA$ and $X \simeq E = f \otimes_\cA f^R$.
  This Morita equivalence identifies the pair $(X,f)$ with the pair $(\cA,\cA)$, showing that the map $\{\text{topological orders}\} \to \{\text{anomalous TQFTs}\}$ is an equivalence.
\end{proof}

Although important, Theorem~\ref{thm.anomalousTQFT} does not establish the completeness of Definition~\ref{defn.main}, because \emph{a priori} topological orders and TQFTs are equivalent. It does, though, hint at the proof. 

Suppose $\cA$ is an arbitrary multifusion $n$-category. Then it is fully dualizable in $\cat{Mor}_1(n\cat{KarCat}_\bC)$, and so lives in the subcategory $\Sigma^{n+2}\bC$ by Remark~\ref{remark.bestiary}. 
But then Theorem~2.4.4 of \cite{GJFcond} constructs from $\cA$ an $(n{+}2)$-dimensional gapped topological lattice system $\cX$ whose Hamiltonian is a sum of commuting projectors. Furthermore, the same theorem converts morphisms in $\Sigma^{n+2}\bC$ into interfaces, again of commuting Hamiltonian projector type. After running the argument from \S2.4 of \cite{MR3978827}, we find that if $\cA$ is invertible in $\cat{Mor}_1(n\cat{KarCat}_\bC)$, then $\cX$ is in an invertible phase. We also find that the $1$-morphism $\mathbf{1} \to \cA$ in Theorem~\ref{thm.anomalousTQFT}, namely ``$\cA$ as an $\cA$-module,'' determines a (noninvertible) boundary condition $\cB$ for $\cX$, and the state-operator correspondence identifies $\cA$ with the $n$-category of boundary operators.
This establishes part of the completeness of Definition~\ref{defn.main}: we have built a map
\begin{multline*}
 \{\text{multifusion $n$-categories with trivial centre}\} \\ \longto \frac{\{\text{topological phases}\}}{\{\text{invertible phases}\}}\end{multline*}
whose composition with the function ``consider the $n$-category of operators'' is the identity on $\{\text{multifusion $n$-categories with trivial centre}\}$. In particular, every multifusion $n$-category with trivial centre does arise as the operators on an ``anomalous topological phase.''

To finish the completeness, we must argue that Ansatz~\ref{ansatz.heisenberg}, which claims that physical topological orders are determined by their operators, holds for the quotient $\{\text{topological phases}\} / \{\text{invertible phases}\}$. 
Suppose that $\cB'$ is some $(n{+}1)$-dimensional boundary condition of some $(n{+}2)$-dimensional invertible phase $\cX'$, and that $\cA$ is its multifusion $n$-category of operators. Let $\cB$ and $\cX$ be the phases constructed from $\cA$ as in the previous paragraph. We must show that $\cB$ and $\cB'$ are equivalent ``up to invertible phases.'' To do so, consider the opposite algebra $\cA^\op$, and construct from it the opposite phase $\cB^\op$, which is a boundary condition for $\cX^\op$. Then stack the system $(\cB',\cX')$ with $(\cB^\op,\cX^\op)$ to produce $(\cB' \otimes \cB^\op, \cX' \otimes \cX^\op)$. Assuming Ansatz~\ref{ansatz.stacking} from the previous section, this stacked system will have
$\cA^e = \cA \boxtimes \cA^\op$
 as its $n$-category of (boundary) operators.
 
But $\cA^e$ contains within it a canonical condensation algebra` corresponding to the canonical module $\cA$. Interpreted as operators in $\cB' \otimes \cB^\op$, condensing this algebra produces an interface between $\cB' \otimes \cB^\op$ and a phase with trivial operators. We earlier argued that phases with trivial operators are invertible, and so we have produced a ``boundary condition modulo invertible phases'' for $\cB' \otimes \cB^\op$. This is equivalent to an ``interface modulo invertible phases'' between $\cB'$ and $\cB$. From the construction, this interface is invisible to all operators. Indeed, one can run the construction with the roles of $\cB$ and $\cB'$ exchanged to produce an interface in the other direction, thereby producing the inverse interface. This shows that $\cB$ and $\cB'$ are equivalent in the quotient $\{\text{topological phases}\}/\{\text{invertible phases}\}$, and establishes the completeness of Definition~\ref{defn.main}.

\section{Nondegenerate ground states and braided fusion $(n{-}1)$-categories} \label{sec.braided}

\subsection{From fusion to braided}\label{subsec.fusion}

Our Definition~\ref{defn.main} departs in a small but important way from the definition of ``topological order'' contemplated in \cite{1405.5858,10.1093/nsr/nwv077,PhysRevX.8.021074,PhysRevX.9.021005,PhysRevB.100.045105,1912.13492}, tracking closer to in this way to the definition proposed in \cite{KWZ1,KWZ2}. Algebraically, the distinction stems from the prefix ``multi,'' where by definition a multifusion $n$-category $\cA$ is \define{fusion} if it has no nontrivial $0$-dimensional operators in the sense that $\Omega^n\cA = \bC$ (compare \cite{Reutter2018}, which axiomatizes ``fusion $2$-category'').
Physically, the distinction has to do with whether to allow a local ground state degeneracy: the local ground states are in bijection with the irreducible projections in $\Omega^n\cA$. 

There are a few reasons why we do not impose nondegeneracy of the local ground state as part of our definition of ``topological order.'' Most fundamentally, there are interesting topological orders with local ground state degeneracy; for instance, a local ground state degeneracy can be protected by symmetry. 
Typically, if the symmetry is ignored, then this local ground state degeneracy is unstable, and a flow can be triggered which localizes the topological order onto the suborder supported near a single local ground state. One might think that, in this case, the original topological order would split as some sort of ``direct sum'' of sub-orders supported at each ground state. But this does not hold: except in special circumstances, extra data relating the different ground states is required. In particular, although an individual topological order does not change when it is stacked with an invertible phase, there can be a meaningful ``relative phase'' between two different ground states.

That being said, the case of a nondegenerate local ground state is certainly of interest, and is the focus of this section. Our goal is to prove the intuition, which explains in particular the title of \cite{1405.5858}, that $(n{+}1)$-dimensional topological orders with nondegenerate local ground state are described by braided fusion $(n{-}1)$-categories with trivial braided centre. The argument behind this intuition is that remote detectability, together with the ``fusion'' condition, forbid the presence of nontrivial codimension-$1$ operators, so that our monoidal category $\cA$ ``is'' the braided monoidal category $\Omega\cA$ of operators of codimension $\geq 2$. Indeed, the argument goes, a codimension-$1$ operator must be detectable, and the only way to detect it is by its commutator with a $0$-dimensional operator. More generally, the above cited papers give the impression that there is some sort of perfect pairing between $k$-dimensional and $(n{-}k)$-dimensional operators in an $(n{+}1)$-dimensional topological order.

Although the intuition is correct, the intuitive argument supporting it is problematic. Why must an object of our multifusion $n$-category $\cA$ be detected by an $n$-morphism (a $0$-dimensional operator)? Why can't it be detected by lower-dimensional operators? Indeed, an $(n{+}1)$-dimensional topological order always has nontrivial codimension-$1$ operators. What we will show in the fusion case is not that codimension-$1$ operators are trivial, but that they are built, via categorical condensation, from networks of codimension-$(\geq 2)$ operators. We will not prove the stronger intuition about a perfect pairing between $k$- and $(n{-}k)$-dimensional operators, and the author suspects that it will be hard to make such an intuition precise and that, once made precise, it will be false.

\begin{proposition}\label{prop.braidedstronger}
  Suppose $\cA$ is a fusion $n$-category. Then for each object $A\in\cA$, there is an object $Z \in \cZ(\cA)$ and a condensation $f(Z) \condense A$, where $f : \cZ(\cA) \to \cA$ is the canonical map. 
  \end{proposition}
  
  When $n=1$, this is the statement that the functor $f : \cZ(\cA) \to \cA$ is \define{dominant}.
 
\begin{proof}
 Recall the enveloping algebra $\cA^e = \cA \boxtimes \cA^\op$. We will apply Lemma~\ref{lemma.technical} to the $(n{+}1)$-category $\cS = \Mod(\cA^e)$, pointed by the rank-one free module $\mathbf 1 = \cA^e$. Then $\Omega\cS = \cA^e$, which is fusion since $\cA$ is, so $\Omega^{n{+}1}\cS = \bC$. Thus Lemma~\ref{lemma.technical} assures that any fully-adjunctible nonzero map of $\cA^e$-modules $\cX \to \cA^e$ extends to an $\cA^e$-linear condensation $\cX \condense \cA^e$.
 
 For example, consider the multiplication map $m : \cA \boxtimes \cA \to \cA$ as a map of (left, say) $\cA^e$-modules $\cA^e \to \cA$, and form its $\cA^e$-linear dual map $m^* : \cA^* \to \cA^e$, where $\cA^* = \hom_{\cA^e}(\cA,\cA^e)$. Then $m^*$ extends to a condensation $\cA^* \condense \cA^e$ of (right) $\cA^e$-modules. Tensoring over $\cA^e$ with $\cA$ produces a condensation
 $$ \cA^* \boxtimes_{\cA^e} \cA \condense \cA^e \boxtimes_{\cA^e} \cA = \cA.$$
 But the left-hand side is $\cZ(\cA)$ by compare Remark~\ref{remark.centres}, and map $\cZ(\cA) \to \cA$ produced from $m^*$ is the canonical one, called $f$ above.
 
 Thus $f : \cZ(\cA) \to \cA$ extends to a condensation.  Let $g : \cA \to \cZ(\cA)$ denote the splitting of $f$ in this condensation (which in the proof of Lemma~\ref{lemma.technical} is taken to be the right adjoint of $f$). Then $fg \condense \id_\cA$. Applying these functors to an object $A \in \cA$ produces a condensation $f(g(A)) \condense A$. Setting $Z = g(A) \in \cZ(\cA)$ completes the proof.
\end{proof}

\begin{theorem} \label{thm.braided}
  Suppose $\cA$ is an $(n{+}1)$-dimensional topological order in the sense of Definition~\ref{defn.main}, i.e.\ a multifusion $n$-category with trivial centre, for $n \geq 1$. Then $\cA$ is fusion if and only if the canonical map $\Sigma\Omega\cA \to \cA$ is an equivalence.
\end{theorem}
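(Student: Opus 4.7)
The plan is to prove the two directions separately; the substantive direction is the ``only if''.

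\textbf{(``If'' direction.)} Assume $\Sigma\Omega\cA \to \cA$ is an equivalence; we show $\cA$ is fusion, i.e.\ $\Omega^n\cA = \bC$. The centre functor takes Cartesian products to products, and the trivial centre $\cZ(\cA) = \Sigma^n\bC$ is indecomposable as an object of $\cat{Mor}_1(n\cat{KarCat}_\bC)$, so $\cA$ itself is indecomposable as a multifusion $n$-category. Under the equivalence $\cA \simeq \Sigma\Omega\cA$, any splitting of $\Omega\cA$ as a Cartesian product of braided multifusion $(n{-}1)$-categories would produce a corresponding splitting of $\Sigma\Omega\cA$, contradicting indecomposability; hence $\Omega\cA$ is indecomposable. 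Iterating this argument on $\Omega^k\cA$ for $k = 1, \ldots, n$, indecomposability at every level percolates down to the bottom. The commutative semisimple $\bC$-algebra $\Omega^n\cA$ (commutative by Eckmann--Hilton, semisimple by dualizability of $\cA$) is of the form $\bC^m$, and indecomposability forces $m=1$, so $\Omega^n\cA = \bC$.

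\textbf{(``Only if'' direction.)} Assume $\cA$ is fusion. Fully-faithfulness of $\Sigma\Omega\cA \to \cA$ is automatic from the universal property of Karoubi completion applied to the tautologically fully-faithful inclusion $\rB\Omega\cA \hookrightarrow \cA$ that sends the unique object to $\mathbf 1_\cA$. Essential surjectivity is the real content: for every $X \in \cA$ we must produce a categorical condensation $\mathbf 1_\cA \condense X$ in $\cA$, presenting $X$ as a splitting of a condensation monad on $\mathbf 1_\cA$ (equivalently, on the unique object of $\rB\Omega\cA$).

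The strategy is to apply Lemma~\ref{lemma.technical} after passing to the one-object $(n{+}1)$-category $\rB\cA$.  Given $X \in \cA$, full dualizability (Theorem~\ref{thm.dualizability}) supplies a dual $X^\vee$ and the adjunction data $\mathrm{ev}: X^\vee \otimes X \to \mathbf 1_\cA$, $\mathrm{coev}: \mathbf 1_\cA \to X \otimes X^\vee$. In $\rB\cA$, objects of $\cA$ become $1$-morphisms, and $(X, X^\vee)$ is an adjoint pair of $1$-morphisms with $\mathrm{ev}$, $\mathrm{coev}$ as unit/counit. The hypotheses of Lemma~\ref{lemma.technical} hold inside $\rB\cA$, since $\Omega^{n+1}\rB\cA = \Omega^n\cA = \bC$ by the fusion hypothesis and $\Omega^n\rB\cA = \Omega^{n-1}\cA$ is semisimple by dualizability. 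Feeding the adjunction data into the lemma produces a condensation of $1$-morphisms in $\rB\cA$ whose split-off $1$-morphism is $X$; unpacked back into $\cA$, this is precisely a condensation $\mathbf 1_\cA \condense X$, realized concretely by a condensation monad structure on the algebra object $X^\vee \otimes X \in \Omega\cA$ (with multiplication induced by $\mathrm{ev}$).

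\textbf{Main obstacle.}  The delicate point is directionality. Applied naively in $\cA$, Lemma~\ref{lemma.technical} produces condensations \emph{onto} $\mathbf 1_\cA$ (from the nonzero map $\mathrm{ev}: X^\vee \otimes X \to \mathbf 1_\cA$), whereas essential surjectivity demands a condensation \emph{from} $\mathbf 1_\cA$ onto $X$.  Passing to $\rB\cA$ raises all dimensions by one and converts the dual pair $(X, X^\vee)$ into an adjunction of $1$-morphisms, and this adjunction is what rotates a naive ``onto-the-unit'' condensation into a genuine ``from-the-unit'' condensation one dimension down. Checking that the higher coherences supplied by Lemma~\ref{lemma.technical} descend cleanly to give an honest condensation monad on $\mathbf 1_\cA$ that splits to $X$ (and not, e.g., to $X \otimes X^\vee$) is the only step requiring genuine care.
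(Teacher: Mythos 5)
Your ``only if'' direction has a fatal gap: it never uses the trivial-centre hypothesis, and so, if it worked, it would show that $\Sigma\Omega\cA \to \cA$ is an equivalence for \emph{every} fusion $n$-category. That is false already for $n=1$: take $\cA = \Vect_\bC[\bZ_2]$, which is fusion with every object dualizable, yet $\Sigma\Omega\cA = \Vect_\bC \neq \cA$ because $\hom(\mathbf 1_\cA, X) = 0$ for the nontrivial simple $X$, so no condensation $\mathbf 1_\cA \condense X$ can exist. Tracing your argument through this example locates the failure precisely in your final paragraph: feeding the duality data for $X$ into Lemma~\ref{lemma.technical} inside $\rB\cA$ produces, at best, a condensation $X \otimes X^\vee \condense \mathbf 1_\cA$, exhibiting the \emph{unit} as a condensate of $X \otimes X^\vee$. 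The ``rotation'' you invoke to convert this into a condensation $\mathbf 1_\cA \condense X$ does not exist: condensations are not reversible, and the adjunction between $X$ and $X^\vee$ as $1$-morphisms of $\rB\cA$ supplies $2$-morphisms $\id_\bullet \to X \otimes X^\vee$ and $X^\vee \otimes X \to \id_\bullet$, never the $1$-morphisms $\mathbf 1_\cA \leftrightarrows X$ in $\cA$ that a condensation from the unit requires (and which, as the example shows, may simply not exist without the trivial-centre hypothesis). The paper instead applies Lemma~\ref{lemma.technical} in $\cS = \Mod(\cA^e)$ to the dual $m^* : \cA^* \to \cA^e$ of the multiplication map, obtaining a condensation of $\cA^e$-modules that, after tensoring over $\cA^e$ with $\cA$, becomes a condensation $\cZ(\cA) \condense \cA$; only then does triviality of the centre enter, identifying $\cZ(\cA)$ with $\Sigma^n\bC$ so that every object of $\cA$ is a condensate of direct sums of $\mathbf 1_\cA$. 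That passage through the centre is the missing idea, and it cannot be bypassed by working object-by-object with duals inside $\cA$.

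Your ``if'' direction also has a gap, though a repairable one. The step from ``$\cA$ indecomposable'' to ``$\Omega\cA$ indecomposable'' legitimately uses the hypothesis $\cA \simeq \Sigma\Omega\cA$, but the claimed iteration down to $\Omega^n\cA$ does not: it would require $\Omega^k\cA \simeq \Sigma\Omega^{k+1}\cA$ at each stage, which is not given. (For $k \geq 1$ one can instead use the braiding on $\Omega^k\cA$ to show that orthogonal summands of its unit decompose the whole category, but that is a different mechanism and must be supplied.) The paper's argument is more direct: when $\cA = \Sigma\Omega\cA$, every $0$-dimensional operator acquires a canonical half-braiding with every object and hence lifts to $\cZ(\cA) = \Sigma^n\bC$, forcing $\Omega^n\cA = \bC$.
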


\begin{proof}
The ``if'' direction is easier, and follows the intuition sketched above. Suppose that $\Sigma\Omega\cA \to \cA$. In other words, for every object $A \in \cA$, there is a condensation $1_\cA \condense A$, so $A$ is selected as the image of a condensation monad $P \in \Omega\cA$. Suppose that $\alpha \in \Omega^n\cA$ is a $0$-dimensional operator. Then it commutes automatically with all $k$-morphisms for $k\geq 1$, i.e.\ with all of $\Omega\cA$. In particular, it commutes with the condensation monad $P$, and hence with its image $A$. Thus if $\cZ(\cA)$ is trivial, there can be no nontrivial $0$-dimensional operators.

The harder ``only if'' direction asserts that if $\cA$ is a fusion $n$-category with trivial centre, then $\cA = \Sigma\Omega\cA$. 
 Equivalently, we wish to show that for each object $A \in \cA$, there is a condensation $1_\cA \to A$, where $1_\cA \in \cA$ denotes the unit object. This follows immediately from Proposition~\ref{prop.braidedstronger} and the assumption that $\cZ(\cA)$ is trivial.
\end{proof}

\subsection{Braided centres}\label{subsec.braidedcentre}

Braided monoidal objects in $(n{-}1)\cat{KarCat}_\bC$ naturally form an $(n{+}2)$-category called $\cat{Mor}_2((n{-}1)\cat{KarCat}_\bC)$. The $1$-morphisms are ``monoidal bimodule $(n{-}1)$-categories'' between braided monoidal $(n{-}1)$-categories, and the $2$-morphisms are bimodule $(n{-}1)$-categories. Every object in $\cat{Mor}_2((n{-}1)\cat{KarCat}_\bC)$ is automatically $2$-dualizable, but $3$-dualizability is hard. Indeed, as in the second proof of Theorem~\ref{thm.dualizability}, one may hope to have a functor $\Sigma : \cat{Mor}_2((n{-}1)\cat{KarCat}_\bC) \to \cat{Mor}_1(n\cat{KarCat}_\bC)$, but it is defined only on those $2$-morphisms which are sufficiently adjunctible. The upshot is that if $\cB$ is $3$-dualizable in $\cat{Mor}_2((n{-}1)\cat{KarCat}_\bC)$, then $\Sigma\cB$ is $3$-dualizable in $\cat{Mor}_1(n\cat{KarCat}_\bC)$, from which full dualizability of $\cB$ follows. 
Extending Definition~\ref{defn.multifusion}, it is reasonable to declare that the fully dualizable objects in $\cat{Mor}_2((n{-}1)\cat{KarCat}_\bC)$ are the \define{braided multifusion $(n{-}1)$-categories}, although as in Remark~\ref{remark.multifusion} in positive characteristic the word ``separable'' should be added.

As an example, suppose that $\cA$ is a multifusion $n$-category. Then $\Omega\cA$ is automatically braided monoidal. Furthermore, the canonical map $\Sigma\Omega\cA \to \cA$ is a full rigid monoidal subcategory. By Proposition~\ref{prop.subfusion}, $\Sigma\Omega\cA$ is fully dualizable in $\cat{Mor}_1(n\cat{KarCat}_\bC)$, and so $\Omega\cA$ is fully dualizable in $\cat{Mor}_2((n{-}1)\cat{KarCat}_\bC)$. In other words, $\Omega\cA$ is braided multifusion.
This fact is strengthened in Corollary~3.33 of~\cite{2011.02859} (which appeared after the first version of this paper was circulated), which says (in the $m=2$ case) that a braided  monoidal $n$-category is braided-multifusion  if and only if its underlying monoidal $n$-category is multifusion.

Although every braided multifusion $(n{-}1)$-category $\cB$, being dualizable, determines an $(n{+}2)$-dimensional TQFT, not every $\cB$ presents an $(n{+}1)$-dimensional topological order. The missing ingredient, as in the multifusion $n$-category case, is a ``remote detectability'' axiom: some sort of ``centre'' of $\cB$ must be trivial. 
Let us write $\cZ_{(1)}(\cB)$ for the centre of $\cB$-as-a-nonbraided-object, i.e.\ its Drinfeld centre. Then $\cZ_{(1)}(\cB)$ is never trivial: the braiding on $\cB$ provides a way to realize every object of $\cB$ inside $\cZ_{(1)}(\cB)$. 
The appropriate centre of a braided monoidal object $\cB$ measures the extent to which objects in $\cB$ commute more than is mandated by the braiding. When we want to emphasize that $\cB$ is considered as a braided object, we will write this centre as $\cZ_{(2)}(\cB)$, and call it the \define{braided centre}; it is also commonly referred (when $\cB$ is a 1-category) as the \define{M\"uger centre} of $\cB$. When the braiding on $\cB$ is implicit, we will simply write $\cZ(\cB)$ and call it the \define{centre} of $\cB$.

There are various equivalent ways to define $\cZ_{(2)}(\cB)$. Most fundamentally, one may consider the $n$-category of endomorphisms of the identity $1$-morphisms on $\cB \in \cat{Mor}_2((n{-}1)\cat{KarCat}_\bC)$. This is the $n$-category of \define{braided $\cB$-modules}.
The identity element is ``$\cB$ as a braided $\cB$-module.'' By definition, $\cZ_{(2)}(\cB)$ is its $(n{-}1)$-category of endomorphisms.
The almost-functor $\Sigma : \cat{Mor}_2((n{-}1)\cat{KarCat}_\bC) \to \cat{Mor}_1(n\cat{KarCat}_\bC)$ is enough to provide an equivalence
$$ \{\text{braided $\cB$-modules}\} = \Omega\{\text{$\Sigma\cB$-bimodules}\},$$
and so \cite[Section 4.8]{HA}
$$ \cZ_{(2)}(\cB) = \Omega\cZ_{(1)}(\Sigma\cB).$$

The analogy between braided $\cB$-modules and $\cA$-bimodules, where $\cA$ is merely associative, may be pressed further. Recall that any monoidal $n$-category $\cA$ has an \define{enveloping algebra} $\cA^e = \cA \boxtimes \cA^\op$, such that $\cA$-bimodules are the same as $\cA^e$-modules. Similarly, any braided $(n{-}1)$-category $\cB$ has an \define{enveloping algebra}, which we will call $\cB^e_{(2)}$, such that braided $\cB$-modules are $\cB^e_{(2)}$-modules. $\cB^e_{(2)}$ is also sometimes called the \define{annular category} of $\cB$, and can be defined as
$$ \cB^e_{(2)} = \int_{S^1_b}\cB$$
where $M \mapsto \int_M \cB$ is the at-least-$2$-dimensional TQFT valued in $\cat{Mor}_2((n{-}1)\cat{KarCat}_\bC)$ determined by $\cB$, and $S^1_b = \partial D^2$ means the circle $S^1$ with boundary framing. For comparison, the associative enveloping algebra could have been defined as $\cA^e = \int_{S^0_b}\cA$, where $S^0_b = \partial D^1$ means the $0$-sphere with boundary framing. Then we may define
$$ \cZ_{(2)}(\cB) = \End_{\cB^e_{(2)}}(\cB).$$
See for example \cite{MR3248737,1804.07538,2006.08022}.

If $\cB$ is multifusion, then it is dualizable as a $\cB^e_{(2)}$-module, and the equivalence $\cZ_{(1)}(\cA) = \int_{S^1_b}\cA$ from Remark~\ref{remark.centres}
extends to:
$$ \cZ_{(2)}(\cB) = \int_{S^2_b}\cB.$$
Indeed, let $\cM$ be the dual module to $\cB$, defined so that the functor $\cM \otimes_{\cB^e_{(2)}}(-) : \cat{Mod}(\cB^e_{(2)}) \to (n{-}1)\cat{KarCat}_\bC$ is right-adjoint to tensoring with $\cB$. Then $\int_{S^2_b}\cB := \cM \otimes_{\cB^2_{(2)}} \cB$. 
But the right adjoint to tensoring with $\cB$ is $\hom_{\cB^e_{(2)}}(\cB,-)$, and so
$$ \cM \otimes_{\cB^2_{(2)}} \cB \cong \hom_{\cB^e_{(2)}}(\cB,\cB) = \cZ_{(2)}(\cB).$$

We will say that a braided multifusion $(n{-}1)$-category $\cB$ \define{has trivial centre} if $\cZ_{(2)}(\cB) = \Sigma^{n-1}\bC$. 
A braided $(n{-}1)$-category with trivial centre is often called \define{nondegenerate}.
By the above remarks, if $\Sigma\cB$ has trivial centre, then so does $\cB$.
We will show the converse in Corollary~\ref{cor.braidedcentre}, confirming Conjecture~2.18 of \cite{KWZ1}.
 Note that the claim is trivial if $n=1$, as then $\cZ_{(2)}(\cB) = \cB$. Otherwise, $n \geq 2$ and $\cB$ is at least $4$-dualizable, so that the TQFT $M \mapsto \int_M \cB$ is at least $4$-dimensional.
In unpublished work reported in \cite{FreedAspects}, Freed and Teleman have shown that an \emph{oriented} at-least-$4$-dimensional TQFT, valued in an arbitrary target higher category, is invertible as soon as its value on $S^2$ is, which would imply the desired claim if the requirement of orientability can be dropped. 
A related invertibility result is due to \cite{BJSS2020}, which appeared after this paper was first posted to arXiv.
Our proof in Corollary~\ref{cor.braidedcentre} is different, and uses facts about multifusion categories that do not hold for  TQFTs with other values.

\subsection{Topological orders with no lines}\label{subsec.nolines}

Theorem~\ref{thm.braided} answered the question of when a topological order $\cA$ can be recovered from its operators of codimension $\geq 2$: exactly when there are no nontrivial $0$-dimensional operators. One can ask the same question in higher codimension: when can $\cA$ be recovered from its codimension-$(\geq 3)$ operators? I.e.\ when does $\cA$ have ``no codimension-$2$ operators'' in the sense that they are all built from codimension-$(\geq 3)$ operators by condensation? The papers \cite{PhysRevX.8.021074,PhysRevX.9.021005} assert without proof (and require the result for their analysis) that the answer is: When $\cA$ has no line operators (other than multiples of the trivial one). Sure enough:

\begin{theorem}\label{thm.nolines}
  Suppose $\cA$ is an $(n{+}1)$-dimensional topological order in the sense of Definition~\ref{defn.main}. Then the canonical map $\Sigma^2\Omega^2\cA \to \cA$ is an equivalence if and only if the category $\Omega^{n-1}\cA$ of line operators in $\cA$ is trivial.
\end{theorem}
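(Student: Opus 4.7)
The plan is to mirror the proof of Theorem~\ref{thm.braided} one categorical level higher, replacing the Drinfeld centre $\cZ(\cA)$ by the braided centre $\cZ_{(2)}(\Omega\cA)$. The key bridge is Corollary~\ref{cor.braidedcentre}, which for fusion $\cA$ identifies triviality of $\cZ(\cA)$ with triviality of $\cZ_{(2)}(\Omega\cA)$.

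For the ``if'' direction, suppose $\Omega^{n-1}\cA = \Sigma\bC$; then $\Omega^n\cA = \Omega(\Sigma\bC) = \bC$, so $\cA$ is fusion, and Theorem~\ref{thm.braided} gives $\cA \simeq \Sigma\Omega\cA$. Writing $\cB = \Omega\cA$, a braided fusion $(n{-}1)$-category, it suffices to establish $\cB \simeq \Sigma\Omega\cB = \Sigma\Omega^2\cA$. I mimic the hard-direction argument of Theorem~\ref{thm.braided} in the braided setting. Let $\cB^e_{(2)} = \int_{S^1_b}\cB$ be the braided enveloping algebra, so that braided $\cB$-modules are $\cB^e_{(2)}$-modules, and apply Lemma~\ref{lemma.technical} to $\cS = \Mod(\cB^e_{(2)})$. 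The hypothesis $\Omega^n\cS = \bC$ is verified via $\Omega^n\cS = \Omega^{n-1}\cB^e_{(2)} = \Omega^{n-1}\cB = \bC$ (the last equality because $\cA$ is fusion). The braided multiplication $m: \cB \otimes \cB \to \cB$ presents $\cB$ as a cyclic $\cB^e_{(2)}$-module; its $\cB^e_{(2)}$-linear dual $m^*: \cB^* \to \cB^e_{(2)}$ extends by Lemma~\ref{lemma.technical} to a $\cB^e_{(2)}$-linear condensation $\cB^* \condense \cB^e_{(2)}$. Tensoring with $\cB$ over $\cB^e_{(2)}$ produces a condensation
$$\cZ_{(2)}(\cB) \simeq \cB^* \otimes_{\cB^e_{(2)}} \cB \condense \cB,$$
realizing dominance of the canonical functor $\cZ_{(2)}(\cB) \to \cB$. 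By Corollary~\ref{cor.braidedcentre}, $\cZ_{(2)}(\cB) = \Sigma^{n-1}\bC$, so every object of $\cB$ is a condensate of the unit, i.e., $\cB \simeq \Sigma\Omega\cB$. Combining, $\cA \simeq \Sigma\Omega\cA \simeq \Sigma(\Sigma\Omega^2\cA) = \Sigma^2\Omega^2\cA$.

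For the ``only if'' direction, suppose $\cA \simeq \Sigma^2\Omega^2\cA$; applying $\Omega$ (and using $\Omega\Sigma = \id$) gives $\Omega\cA \simeq \Sigma\Omega^2\cA$, whence $\cA \simeq \Sigma\Omega\cA$, and by the easy direction of Theorem~\ref{thm.braided}, $\cA$ is fusion. An arbitrary line operator $\ell \in \Omega^{n-1}\cA$ is an endomorphism of the identity in $\cB = \Omega\cA$ and braids trivially with the unit object of $\cB$; since every object of $\cB \simeq \Sigma\Omega^2\cA$ is a condensate of the unit, this trivial braiding extends canonically, by functoriality of condensation, to coherent half-braiding data with every object of $\cB$. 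This produces a lift $\Omega^{n-1}\cA \hookrightarrow \Omega^{n-2}\cZ_{(2)}(\cB)$. By Corollary~\ref{cor.braidedcentre}, $\cZ_{(2)}(\cB) = \Sigma^{n-1}\bC$, so the target is $\Omega^{n-2}\Sigma^{n-1}\bC = \Sigma\bC$; combined with the automatic inclusion $\Sigma\bC \subseteq \Omega^{n-1}\cA$, this forces $\Omega^{n-1}\cA = \Sigma\bC$.

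The main obstacle lies in the technical setup of the braided enveloping $\cB^e_{(2)}$: specifically, the braided analog $\cZ_{(2)}(\cB) \simeq \cB^* \otimes_{\cB^e_{(2)}} \cB$ of the identification in Remark~\ref{remark.centres}, and the verification of the hypotheses of Lemma~\ref{lemma.technical} for $\cS = \Mod(\cB^e_{(2)})$ (in particular, controlling how fusion propagates from $\cB$ to $\cB^e_{(2)}$). Once these braided-TQFT-valued analogs of the non-braided constructions are in hand, each step parallels the proof of Theorem~\ref{thm.braided} one dimension up.
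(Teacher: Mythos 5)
Your skeleton is the paper's: reduce to $\cB = \Omega\cA$ via Theorem~\ref{thm.braided}, then show the canonical map $\cZ_{(2)}(\cB) \to \cB$ extends to a condensation by applying Lemma~\ref{lemma.technical} to $\Mod(\cB^e_{(2)})$ and dualizing the multiplication $\cB^e_{(2)} \to \cB$. But the one step you dispatch in a single line --- verifying that $\cB^e_{(2)}$ is fusion --- is exactly where the content of the theorem lives, and your verification is wrong. The identity $\Omega^{n-1}\cB^e_{(2)} = \Omega^{n-1}\cB$ does not hold: $\cB^e_{(2)} = \int_{S^1_b}\cB$, and the correct computation (a state--operator argument, which occupies most of the paper's proof) gives
$$\Omega^{n-1}\cB^e_{(2)} = \int_{(D^n,S^{n-1})}\int_{S^1_b}\cB = \int_{S^1_b}\Omega^{n-2}\cB,$$
i.e.\ the ``dimension'' of the $1$-category of line operators, which is $\bC$ precisely when $\Omega^{n-2}\cB = \Omega^{n-1}\cA$ is trivial. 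Your version uses only that $\cA$ is fusion and never invokes the hypothesis on line operators; if it were correct, the argument would show $\cA \simeq \Sigma^2\Omega^2\cA$ for \emph{every} fusion $\cA$ with trivial centre, contradicting your own ``only if'' direction (take $\cA = \Sigma\cB_0$ for $\cB_0$ a nontrivial modular tensor category). So the gap is genuine: you must actually compute $\Omega^{n-1}\cB^e_{(2)}$ and watch the line operators appear. (You do flag this verification as ``the main obstacle'' at the end, but the argument you supply for it is not merely incomplete --- it proves too much.)

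A secondary point: you invoke Corollary~\ref{cor.braidedcentre} to get $\cZ_{(2)}(\cB) = \Sigma^{n-1}\bC$, but in the paper that corollary is \emph{deduced from} Theorem~\ref{thm.nolines}, so as written your argument is circular. This one is repairable: the implication you actually need ($\cZ_{(1)}(\Sigma\cB)$ trivial $\Rightarrow$ $\cZ_{(2)}(\cB)$ trivial) is the easy direction, available from $\cZ_{(2)}(\cB) = \Omega\cZ_{(1)}(\Sigma\cB)$ in \S\ref{subsec.braidedcentre} independently of the corollary; cite that instead, both here and in your ``only if'' direction.
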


To prove the theorem, we will use the following stronger result:

\begin{proposition}\label{prop.nolinesstronger}
  Suppose that $\cB$ is a braided fusion $(n{-}1)$-category, with no restrictions on its centre $\cZ_{(2)}(\cB)$, and suppose that $\Omega^{n-2}\cB $ is trivial. Then for every object $B \in \cB$, there is an object $Z \in \cZ_{(2)}(\cB)$ and a condensation $f(Z) \condense B$, where $f : \cZ_{(2)}(\cB) \to \cB$ is the canonical map.
\end{proposition}

\begin{proof}
As in the proof of Proposition~\ref{prop.braidedstronger}, it suffices to show that $f$ extends to a condensation $\cZ_{(2)}(\cB) \condense \cB$, and we will follow that proof to build it.
Specifically, write $\cB^e = \cB^e_{(2)} = \int_{S^1_b}\cB$ for the enveloping multifusion $(n{-}1)$-category of $\cB$, and let $\cS = \Mod(\cB^e)$ denote its $n$-category of modules. There is a canonical $\cB^e$-linear map $m : \cB^e \to \cB$ described topologically by the inclusion $S^1_b = \partial D^2 \subset D^2$. The dualizability of $\cB$ means that there is a $\cB^e$-linear dual module $\cB^* = \hom_{\cB^e}(\cB,\cB^e)$, and hence a dual map $m^* : \cB^* \to \cB^e$. Tensoring over $\cB^e$ with $\cB$ leads to the map $f : \cZ_{(2)}(\cB) \to \cB$. Thus,
  if we can show that $m^*$ extends to a condensation $\cB^* \condense \cB^e$, then also $f$ will extend to a condensation. 
  
  To complete the proof, it thus suffices to show that the multifusion $(n{-}1)$-category $\cB^e$ is fusion, since then Lemma~\ref{lemma.technical} will imply that $m^*$ extends to a condensation. I.e.\ we wish to compute $\Omega^{n-1}\cB^e = \bC$.
  
  We will do this by using a state-operator correspondence to re-express $\Omega^{n-1}\cB^e$ in terms of the TQFT determined by $\cB^e$.  In fact, $\cB^e$ (and indeed any multifusion $(n-1)$-category) defines not just a TQFT, but a \define{relative} TQFT, which is a version of ``anomalous TQFT'' in which the anomaly $\alpha$ does not need to be invertible \cite{MR3165462,JFS}; compare Theorem~\ref{thm.anomalousTQFT} and the discussion leading up to it. Relative TQFTs can be integrated over not just (framed) manifolds but manifolds with boundary on which the ``boundary condition'' is placed. The characterizing feature of the relative TQFT defined by $\cB^e$ is that $\cB^e$ itself arises as the $(n-1)$-category of boundary excitations.
  
  More generally, relative TQFTs can be evaluated on cobordisms with boundary. These boundaries are separate from the boundaries that cobordisms are stitched along. In order to distinguish, we will write $(M,M_\partial)$ for a typical cobordism with boundary, where  $M_\partial$ is the part of $\partial M$ on which the boundary condition is placed; stitching of cobordisms is done along $\partial(M, M_\partial) := \partial M \sminus M_\partial M$. In particular, a cobordism with boundary is closed if $M_\partial = \partial M$: closed cobordisms with boundary are understood as cobordisms from $\emptyset$ to $\emptyset$. The bulk (aka anomaly) TQFT of $\cB^e$ is the functor
  that takes $(M,\emptyset)$ to what we have been calling $\int_M \cB^e$; it is an $(n+1)$-dimensional absolute TQFT, which is trivial if $\cB$ is a topological order. We will also write this as $\int_{(M,\emptyset)} \cB^e$, so that more generally the value of this TQFT on $(M,M_\partial)$ is $\int_{(M,M_\partial)}\cB^e$.
  
  The \define{state-operator correspondence} is the statement that the vector space of $0$-dimensional operators in an $n$-dimensional TQFT arises as the  Hilbert space of an $(n{-}1)$-dimensional sphere with bounding framing, which we will denote $S^{n-1}_b$. For relative TQFTs, one wants $S^{n-1}_b$ to carry the boundary condition; the bulk manifold is the $n$-disk $D^n$. I.e.:
  $$ \Omega^{n-1}\cB^e = \int_{(D^n, S^{n-1})} \cB^e.$$
  (We leave off the subscript $b$ denoting bounding framing, since $S^{n-1}$ is already understood as $\partial D^n$.) 
  
  Indeed, for any fusion $m$-category $\cF$ and any $k < m$, there is a \define{higher state-operator correspondence} which asserts
  $$ \Omega^k \cF = \int_{(D^{k+1}, S^k)} \cF.$$
  Let us unpack this statement in terms of TQFTs. The fusion $m$-category $\cF$ defines an $(m{+}2)$-dimensional TQFT (with operators $\cZ(\cF)$) together with a distinguished boundary condition. A good way to think of this TQFT is as the ``gauge theory'' which results from ``gauging an $\cF$-symmetry,'' and the boundary condition is the ``Dirichlet'' boundary. (See for instance the discussion of Turaev--Viro theories in \cite{1912.02817}. This way of thinking is completely correct when $\cF$ is generated by a subcategory of invertible operators.)
   We are interested in the $(m-k)$-category of ``states'' for this TQFT that can be placed in the interior of a $(k{+}1)$-dimensional disk and which end at the ``Dirichlet'' boundary condition. With no boundary conditions constraints, the states that can fill a disk are precisely $\cF$. More precisely, for a $(k{+}1)$-dimensional disk with no boundary conditions, the TQFT evaluates to a functor $\int_{D^{k+1}} \cF : \int_{S^k_b}\cF \to \emptyset$. This is a generlized ``hom'' functor: when $k=0$, it is the functor $\cF^\op \boxtimes \cF \to \Sigma^m \bC$ which sends $X \boxtimes Y \mapsto \hom(X,Y)$, and in general this functor measures various spaces of $k$-morphisms. The ``Dirichlet'' boundary condition evaluates this functor at the ``vacuum'' object $1 \in \int_{S^k_b}\cF$, and so computes precisely the end-$k$-morphisms of $1_\cF \in \cF$.
      
  Recall that $\cB^e$ itself arose by compactifying on $S^1_b$ the relative TQFT defined by $\cB$:
  $$ \cB^e = \int_{S^1_b} \cB.$$
  We may therefore compute:
  \begin{align*}
     \int_{(D^n, S^{n-1})} \cB^e & = \int_{(D^n, S^{n-1})} \int_{S^1_b} \cB \\
     & = \int_{(D^n \times S^1_b, S^{n-1} \times S^1_b)} \cB \\
     & = \int_{S^1_b} \int_{(D^n, S^{n-1})} \cB.
  \end{align*}
  But now we can appeal to another state-operator correspondence, this time for the braided fusion $(n-1)$-category $\cB$, or equivalently for the fusion $n$-category $\Sigma \cB$:
  $$ \int_{(D^n, S^{n-1})} \cB = \Omega^{n-1}\Sigma\cB = \Omega^{n-2}\cB.$$
  This is the trivial category $\cat{Vec}$ by assumption, and so $\Omega^{n-1}\cB^e = \int_{S^1_b} \cat{Vec} = \bC$, verifying that $\cB^e$ is fusion.
\end{proof}

\begin{proof}[Proof of Theorem~\ref{thm.nolines}]
  The ``if'' direction is easy: if $\Sigma^2\Omega^2\cA = \cA$, then all of $\Omega^{n-1}\cA$ lifts to $\cZ(\cA)$.
  For the ``only if'' direction, 
  since $\Omega^{n-1}\cA$ is trivial, so is $\Omega^n\cA$, and so, by Theorem~\ref{thm.braided}, $\cA = \Sigma\cB$, where $\cB = \Omega\cA$. Thus it suffices to show that if $\cB$ is an arbitrary braided fusion $(n{-}1)$-category with trivial braided centre, and if its $1$-category $\Omega^{n-2}\cB$ of line operators is trivial, then $\cB = \Sigma\Omega\cB$.
  There is nothing to prove when $n\leq 2$. When $n \geq 3$, this is a special case of Proposition~\ref{prop.nolinesstronger}.
\end{proof}

We may now prove Conjecture 2.18 of \cite{KWZ1}:

\begin{corollary}\label{cor.braidedcentre}
  The assignments $\cA \mapsto \Omega\cA$ and $\cB \mapsto \Sigma\cB$ provide an equivalence between the collections of fusion $n$-categories $\cA$ with trivial centre and braided fusion $(n{-}1)$-categories $\cB$ with trivial centre.
\end{corollary}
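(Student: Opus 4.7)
The plan is to decompose the corollary into two well-definedness statements and a pair of round-trip identities. The round trip $\Omega\Sigma\cB \simeq \cB$ is the formal delooping identity (which applies since $\cB$ is braided fusion and hence has enough duals), and the round trip $\Sigma\Omega\cA \simeq \cA$ is exactly the conclusion of Theorem~\ref{thm.braided} applied to $\cA$, a fusion $n$-category with trivial centre. Thus the content of the corollary reduces to checking that each functor sends the indicated subclass to the other.

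The direction $\cA \mapsto \Omega\cA$ is routine: $\Omega\cA$ is braided by construction, is fusion because $\Omega^{n-1}(\Omega\cA) = \Omega^n\cA = \bC$, and has trivial braided centre via
$$\cZ_{(2)}(\Omega\cA) = \Omega\cZ_{(1)}(\Sigma\Omega\cA) = \Omega\cZ_{(1)}(\cA) = \Omega\Sigma^n\bC = \Sigma^{n-1}\bC,$$
where the first equality is the general identity from \S\ref{subsec.braidedcentre}, the second uses $\Sigma\Omega\cA = \cA$ from Theorem~\ref{thm.braided}, and the third uses the hypothesis on $\cA$.

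For $\cB \mapsto \Sigma\cB$, the ``fusion'' half is immediate from $\Omega^n\Sigma\cB = \Omega^{n-1}\cB = \bC$. The substantive content is that $\cZ_{(1)}(\Sigma\cB) = \Sigma^n\bC$. My plan is to prove the sharper identity $\cZ_{(1)}(\Sigma\cB) \simeq \Sigma\cZ_{(2)}(\cB)$, which under the triviality hypothesis on $\cB$ immediately gives $\cZ_{(1)}(\Sigma\cB) = \Sigma\Sigma^{n-1}\bC = \Sigma^n\bC$. Half of this identity is already available: the general equality $\Omega\cZ_{(1)}(\Sigma\cB) = \cZ_{(2)}(\cB)$ recalled in \S\ref{subsec.braidedcentre} shows that the two sides agree after applying $\Omega$, and both sides are fusion $n$-categories --- for $\cZ_{(1)}(\Sigma\cB)$ one computes $\Omega^n\cZ_{(1)}(\Sigma\cB) = \Omega^{n-1}\cZ_{(2)}(\cB) = \Omega^{n-1}\Sigma^{n-1}\bC = \bC$.

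The main obstacle is to promote this $\Omega$-level agreement to a genuine equivalence without circularly invoking Theorem~\ref{thm.braided} on $\cZ_{(1)}(\Sigma\cB)$ itself. Following the enveloping-algebra/state-operator strategy used in the proof of Theorem~\ref{thm.nolines}, I would identify the monoidal enveloping algebra $(\Sigma\cB)^e$ with the delooping $\Sigma\cB^e_{(2)}$ of the braided enveloping algebra $\cB^e_{(2)} = \int_{S^1_b}\cB$, and identify $\cA = \Sigma\cB$ as the image under $\Sigma$ of ``$\cB$ regarded as a braided $\cB$-module.'' The computation
$$\cZ_{(1)}(\Sigma\cB) = \End_{(\Sigma\cB)^e}(\Sigma\cB) \simeq \Sigma\End_{\cB^e_{(2)}}(\cB) = \Sigma\cZ_{(2)}(\cB)$$
then yields the desired identity. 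The delicate step --- and the hardest part of the argument --- is verifying that $\Sigma$ commutes with taking $\cB^e_{(2)}$-module endomorphisms in this Morita-theoretic sense. This is precisely where the multifusion hypothesis is essential: one uses Lemma~\ref{lemma.technical} together with the fusion computation from Theorem~\ref{thm.nolines}'s proof (that the relevant enveloping algebras stay fusion under appropriate triviality assumptions on lower-dimensional operators) in place of the more general but externally-sourced invertibility results of Freed--Teleman and \cite{BJSS2020}.
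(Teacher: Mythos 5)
Your reduction is correct as far as it goes: the two round trips and the direction $\cA\mapsto\Omega\cA$ are handled exactly as in the paper, and the entire content of the corollary is the implication ``$\cZ_{(2)}(\cB)$ trivial $\Rightarrow$ $\cZ_{(1)}(\Sigma\cB)$ trivial.'' But that implication is precisely where your argument stops being a proof. The step you flag as delicate --- that $\Sigma$ commutes with $\End_{\cB^e_{(2)}}(-)$ --- is not a deferrable verification: it is equivalent to the assertion that the fusion $n$-category $\cZ_{(1)}(\Sigma\cB)$ is generated under condensation by $\Omega$ of itself, i.e.\ to a Theorem~\ref{thm.braided}-type statement about $\cZ_{(1)}(\Sigma\cB)$, whose trivial-centre hypothesis you do not have --- exactly the circularity you set out to avoid. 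The tools you propose do not close it. The ``fusion computation'' in the proof of Theorem~\ref{thm.nolines} shows $\Omega^{n-1}\cB^e_{(2)} = \int_{S^1_b}\Omega^{n-2}\cB = \bC$ only under the hypothesis that the line operators $\Omega^{n-2}\cB$ are trivial; a general braided fusion $(n{-}1)$-category with trivial braided centre (already for $n=2$, a modular tensor category) has plenty of nontrivial lines, so that computation is unavailable here. Also $(\Sigma\cB)^e = \Sigma\cB\boxtimes(\Sigma\cB)^\op$ is not $\Sigma\bigl(\cB^e_{(2)}\bigr)$; the correct relation, recalled in \S\ref{subsec.braidedcentre}, sits one loop down ($\{$braided $\cB$-modules$\} = \Omega\{\Sigma\cB$-bimodules$\}$), which is exactly why one only gets the $\Omega$-level identity $\cZ_{(2)}(\cB) = \Omega\cZ_{(1)}(\Sigma\cB)$ for free.

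The paper closes the gap by moving \emph{up} two categorical levels instead of trying to deloop the centre directly. By Theorem~\ref{thm.invertibility} it suffices to show that $\Sigma\cB$ is invertible in $\cat{Mor}_1(n\cat{KarCat}_\bC)$, equivalently that $\cX = \Sigma^2\cB$ is invertible in $(n{+}1)\cat{KarCat}_\bC$, equivalently that the multifusion $(n{+}1)$-category $\End(\cX) = \cX\boxtimes\cX^*$ is trivial. Now $\End(\cX)$ is itself a topological order in the sense of Definition~\ref{defn.main}, and the state--operator correspondence gives $\Omega^2\End(\cX) = \int_{S^2_b}\cB = \cZ_{(2)}(\cB) = \Sigma^{n-1}\bC$; since $n\geq 2$ this forces the $1$-category of line operators $\Omega^n\End(\cX) = \Omega^{n-2}\Sigma^{n-1}\bC$ to be $\Vect_\bC$. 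Theorem~\ref{thm.nolines} therefore applies to $\End(\cX)$ --- this is where the trivial-braided-centre hypothesis on $\cB$ gets converted into the ``no lines'' hypothesis that Theorem~\ref{thm.nolines} actually needs --- and yields $\End(\cX) = \Sigma^2\Omega^2\End(\cX) = \Sigma^{n+1}\bC$, i.e.\ triviality. If you want to salvage your outline, this is the manoeuvre to import: apply the machinery to $\End(\Sigma^2\cB)$, not to $\cB^e_{(2)}$. You should also dispose of $n=1$ separately, where $\cZ_{(2)}(\cB)=\cB$ and there is nothing to prove.
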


\begin{proof}
When $n=1$ there is nothing to show, as in that case $\cB$ is simply a commutative algebra and equal to its braided centre $\cZ_{(2)}\cB = \cB$. For the remainder of the proof, we assume $n\geq 2$.

  Since $\Omega\Sigma\cB = \cB$ for any braided monoidal $\cB$, the only thing to show is that if $\cB$ is a braided fusion
  and $\cZ_{(2)}(\cB)$ is trivial, then $\cZ_{(1)}(\Sigma\cB)$ is trivial.   Equivalently (Theorem~\ref{thm.invertibility}), we must show that $\Sigma\cB$ is invertible in $\Mor_1(n\cat{KarCat}_\bC)$.
  This is equivalent, in turn, to showing that $\cX = \Sigma^2\cB$ is invertible in $(n{+}1)\cat{KarCat}_\bC$.
  Let $\cQ$ denote the corresponding $(n{+}2)$-dimensional TQFT. Its multifusion $(n{+}1)$-category of operators is $\End(\cX) = \cX \boxtimes \cX^* = \cQ(S^0_b)$, which is trivial if and only if $\cX$ is invertible.
  
  Moreover, the $n$-category of codimension-$(\geq 2)$ operators in $\cQ$ is
  $$ \Omega\End(\cX) = \cQ(S^1_b) = \int_{S^1_b} \Sigma\cB = \cZ_{(1)}(\Sigma\cB)$$
  and the $(n{-}1)$-category of codimension-$(\geq 3)$ operators in $\cQ$ is
  $$ \Omega^2\End(\cX) = \cQ(S^2_b) = \int_{S^2_b} \cB = \cZ_{(2)}(\cB).$$
  (These statements are categorical versions of the state-operator correspondence.) By assumption, $\cZ_{(2)}(\cB)$ is trivial and $n \geq 2$. It follows that the $1$-category $\Omega^n\End(\cX)$ of line operators in $\cQ$ is trivial.
  
  But then Theorem~\ref{thm.nolines} implies that $\End(\cX) = \Sigma^2\Omega^2\End(\cX)$ is trivial.
\end{proof}

\begin{remark}\label{remark.highergeneralizations}
  Versions of Theorem~\ref{thm.nolines} and Corollary~\ref{cor.braidedcentre} work in all dimensions. Indeed, let us say that a $p$-monoidal $q$-category $\cC$, Karoubi complete and $\bC$-linear, is \define{multifusion} if it is at least $(p{+}1)$-dualizable in the $(p+q+1)$-category $\cat{Mor}_p(q\cat{KarCat}_\bC)$. (For any reasonable $\cS$, every object in $\cat{Mor}_p(\cS)$ is $p$-dualizable.) Then it is  fully dualizable by a version of Theorem~\ref{thm.dualizability}, whose second proof applies with very few changes. We may further define the \define{$p$-monoidal enveloping algebra} of $\cC$ to be the $1$-monoidal $q$-category
  $$\cC^e_{(p)} = \int_{S^{p-1}_b} \cC,$$
  where the $1$-monoidal structure corresponds to stacking spheres in radial shells. We may also define the \define{$p$-monoidal centre} of $\cC$ to be the $q$-category
  $$\cZ_{(p)}(\cC) = \End_{\cC^e_{(p)}}(\cC) = \int_{S^p_b}\cC.$$
    The second equivalence holds provided $\cC$ is multifusion. This centre
   automatically carries a $(p{+}1)$-monoidal structure from a $(p{+}1)$-dimensional ``pair of pants,'' built by removing two small $(p{+}1)$-dimensional disks from a large $(p{+}1)$-dimensional disk.
   
   Then we may repeat the argument from Theorem~\ref{thm.nolines} to show that $\cC^e_{(p)}$ is fusion provided $\cC$ has no nontrivial operators of dimension $\leq p-1$ in the sense that $\Omega^{q-p+1}\cC$ is trivial. In this case, the canonical functor $\cZ_{(p)}(\cC) \to \cC$ extends to a condensation, and so if $\cC$ has trivial centre, then $\cC = \Sigma\Omega\cC$. We may also repeat the argument from Corollary~\ref{cor.braidedcentre} to show that if $\cC$ has trivial centre, then so does $\Sigma\cC$.
   
   In summary, we learn that a topological order is determined by its operators of codimension $>p$ if and only if it has no nontrivial operators of dimension $<p$. 
   
   Please note what this does not say. A topological order always has operators of low codimension. The statement is merely that when there are no operators of low dimension, then all the operators of low codimension are built via categorical condensation from operators of high codimension. One can picture such operators as networks or meshes. We also are not claiming in general any sort of perfect pairing between equivalence classes of operators of complementary dimension. Indeed, even formulating the perfectness of such a pairing seems nontrivial.
\end{remark}

\section{Classification in low spacetime dimension} \label{sec.dimbydim}

In this last section, we will apply our earlier results, and especially Theorems~\ref{thm.braided} and~\ref{thm.nolines}, to analyze topological orders in low spacetime dimension.
In particular will confirm in \S\ref{subsec.3+1d}, with a few small corrections, the main result of \cite{PhysRevX.8.021074,PhysRevX.9.021005} classifying $(3{+}1)$-dimensional topological orders; the corrections involve a version of ``reduced Galois cohomology'' introduced in \S\ref{subsec.1+1d}.
 It will be interesting to study both the bosonic and fermionic cases (as well as cases enhanced by flavour or time-reversal symmetry). The results in the previous sections apply without change to the fermionic case.

Fermionic topological orders may be defined simply by replacing the category $\Vect_\bC$ with the category $\cat{SVec}_\bC$ of super vector spaces. In particular, rather than working with $\bC$-linear $n$-categories, which are simply $n$-categories enriched in $\cat{Vec}_\bC$, fermionic topological orders are defined in terms of $\cat{SVec}_\bC$-enriched $n$-categories, also called \define{$n$-supercategories}. The difference is that, in an $n$-supercategory, the sets of $n$-morphisms are supervector spaces rather than merely vector spaces, and the compatibility rules relating different types of composition take into account the Koszul sign rules. We also adopt the convention that, for a $1$-supercategory $\cC$ to be \define{additive}, every object $X \in \cC$ must have a parity-reversal object ${\scriptstyle \Pi} X$, defined so that for every $Y \in\cC$, $\hom(Y,{\scriptstyle \Pi}X) = \hom(Y,X) \otimes \bC^{0|1}$; this implies that Karoubi complete additive $n$-supercategories are precisely the $\Sigma^{n-1}\cat{SVec}_\bC$-modules within $n\cat{KarCat}_\bC$.

\subsection{$(0{+}1)$-dimensional topological orders}\label{subsec.0+1d}

According to Definition~\ref{defn.main}, a $(0{+}1)$-dimensional topological order is nothing but a \define{central simple algebra}: a finite-dimensional separable algebra $A$ with one-dimensional centre. Bosonically over $\bC$, and indeed over any algebraically closed field, the only such algebras are matrix algebras: $A \cong \Mat_k(\bC)$ for some $k \in \bN$. Such an algebra arises as the operators in a finite-dimensional topological quantum mechanics model with Hilbert space $\bC^k$.

This is not the end of the story, however, because the isomorphism $A \cong \Mat_k(\bC)$ is not canonical. Of course, the basis of the Hilbert space $\bC^k$ is not canonical. To accommodate this, we can say instead that $A \cong \End(V)$, where $V$ is ``the unique'' irreducible $A$-module. But note the scare-quotes around ``the unique.'' A central simple algebra (over any field) has a unique-up-to-isomorphism irreducible module, but the isomorphism itself suffers a phase ambiguity, and in that sense the vector space $V$ is not quite ``unique.''

This slight failure of uniqueness manifests when considering symmetries of $(0{+}1)$-dimensional topological orders. Suppose $A$ is a $(0{+}1)$-dimensional topological order enhanced with a group $G$ of flavour symmetries, and choose an algebra isomorphism $A \cong \End(V)$, where $V$ is a $k$-dimensional vector space. Then $G$ may not act on~$V$. Indeed, the group of (complex linear) automorphisms of $V$ is $\mathrm{GL}(V) \cong \mathrm{GL}_k(\bC)$, but the group of (complex linear) automorphisms of $A$ is $\PGL(V) \cong \PGL_k(\bC) = \mathrm{GL}_k(\bC) / \bC^\times$. Actions of $G$ on~$A$ are then equivalent to \emph{projective} actions of $G$ on~$V$, and any action of $G$ on~$A$ has an \define{'t Hooft anomaly} in the group cohomology $\H^2(G;\bC^\times)$, which measures the obstruction to lifting the map $G \to \PGL_k(\bC)$ to a map $G \to \mathrm{GL}_k(\bC)$.

There is an equivalent way to say this. Consider replacing $\Vect_\bC$ with the category $\cat{Rep}_\bC(G)$ of (finite-dimensional) $G$-modules. Then we may declare that a \define{$G$-symmetry-enhanced $(0{+}1)$-dimensional topological order} is a central simple algebra object in $\cat{Rep}_\bC(G)$. It is not true that every such algebra arises as $\End(V)$ for some $V \in \cat{Rep}_\bC(G)$. Those that do are the absolute topological orders, and the others have anomaly equal to the $(1{+}1)$-dimensional $G$-SPT corresponding to the 't Hooft anomaly described in the previous paragraph.

Essentially the same story applies to time-reversing symmetries. The only modification is that a time-reversing symmetry (in a unitary physical system) is implemented by a $\bC$-\emph{anti}linear automorphism of $A$. As an example, let us analyze the case of a $\bZ_2^T$ symmetry, meaning a time-reversal symmetry which squares to the identity.  

Working directly, we have an algebra $A$ and an antilinear algebra map $\sigma : A \to A$ such that $\sigma^2 = \id_A$. 
Every $a\in A$ can then be written as $\Re(a) + \sqrt{-1}\Im(a)$, where $\Re(a) = \frac{a + \sigma(a)}2$ and $\Im(a) = \frac{a - \sigma(a)}{2\sqrt{-1}}$ are both $\sigma$-fixed. The result is that the $\sigma$-fixed points are a \define{real form} of $A$, i.e.\ an $\bR$-algebra $A_\bR$ together with an isomorphism $A \cong A_\bR \otimes \bC$.
This $A_\bR$ will be automatically central simple over $\bR$. Conversely, the real algebra $A_\bR$ determines $A = A_\bR \oplus \sqrt{-1} A_\bR$ together with its automorphism $\sigma$, i.e.\ it determines $A$ as a topological order with time-reversal symmetry.

What are the real forms of $A$? Choose an isomorphism $A \cong \Mat_k(\bC)$; then $\sigma(a) = f(\bar a)$, where $a \mapsto \bar a$ is the map that conjugates every matrix entry in $a$ (and so depends on the isomorphism $A \cong \Mat_k(\bC)$), and $f$ is a complex-linear automorphism of $\Mat_k(\bC)$. The statement that $\sigma^2 = \id$ translates into the equation
$$ f \circ \bar{f} = \id,$$
where $\bar f$ is the complex conjugate, in the matrix sense, of $f \in \PGL_k(\bC)$.
This equation can be understood as saying that $(0,1) \mapsto (\id, f)$ defines a 1-cocycle for the twisted nonabelian cohomology $\H^1(\bZ_2^T; \PGL_k(\bC))$. The coboundaries in this cohomology theory come from changes of isomorphism $A \cong \Mat_k(\bC)$, and so the cohomology group measures the real forms, up to isomorphism.

The mathematicians' name for this (nonabelian) cohomology group is $\H^1(\bR; \PGL_k)$. In general, given a field $\bk$ and an algebraic group $G$, there is a (nonabelian) cohomology group $\H^1(\bk;G)$ defined to be equal to the twisted cohomology of the absolute Galois group of $\bk$ with coefficients in the group $G(\bk^s)$ of points of $G$ over the separable closure $\bk^s$ of $\bk$. In the present example, $\bk = \bR$ and $\bk^s = \bC$, and $\bZ_2^T$ is identified with the absolute Galois group of $\bR$.
When $G$ is an abelian algebraic group, there are also higher Galois cohomology groups $\H^i(\bk;G)$.

In general, computing nonabelian Galois cohomology is difficult, and the classification of central simple algebras is rich. But over $\bR$ it is not hard: $\H^1(\bR; \PGL_k)$ is trivial if $k$ is odd, and $\bZ_2$ if $k$ is even. The trivial class in $\H^1(\bR;\PGL_k)$ corresponds to the algebra $\Mat_k(\bR)$, and the nontrivial class in $\H^1(\bR;\PGL_{2k})$ corresponds to the algebra $\Mat_k(\bH)$, where $\bH$ is the quaternion algebra.

The algebraic group corresponding to $\bC^\times$ is called $\bG_m$, and the extension $\bG_m \to \mathrm{GL}_k \to \PGL_k$ of algebraic groups provides a Bockstein map $\H^1(-;\PGL_k) \to \H^2(-;\bG_m)$. The Galois cohomology group $\H^2(\bk;\bG_m)$ is called the \define{Brauer group} of the field $\bk$. When $\bk = \bR$, it is a $\bZ_2$. Just like $(0{+}1)$-dimensional topological orders with $G$ flavour symmetry had 't Hooft anomalies in $\H^2(G; \bC^\times)$, topological orders with time reversal symmetry have anomalies living in  $\H^2(\bZ_2^T; \bC^\times) = \H^2(\bR; \bG_m) \cong \bZ_2$. This anomaly is ``gravitational'' in the sense that the nontrivial class corresponds to a nontrivial $(1{+}1)$-dimensional time-reversal phase, called the \define{Haldane chain}. If one were to define $(0{+}1)$-dimensional topological orders over other fields, one would find a ``gravitational anomaly'' living in the Brauer group.

Note that, just like the $G$-symmetric case could have been described by replacing $\Vect_\bC$ with the category $\cat{Rep}_\bC(G)$ of $G$-modules, the time-reversal case could be described by replacing $\Vect_\bC$ with $\Vect_\bR$, since time-reversal $(0{+}1)$-dimensional phases are identified with central simple algebras therein. In fact, these two replacements are in perfect parallel. The category $\cat{Rep}_\bC(G)$ arises as the ``categorical fixed points'' of the trivial $G$-action on $\Vect_\bC$, whereas \define{Galois descent} identifies $\Vect_\bR$ with the category of fixed points of the action of complex conjugation on $\Vect_\bC$. (By definition, a fixed point of an action of a group $G$ on a category $\cC$ is an object $X \in \cC$ together with isomorphisms $\phi(g) : X \isom g(X)$ such that $\phi(gh) = \phi(g)\phi(h)$.)

Fermionic $(0{+}1)$-dimensional topological orders may be analyzed similarly: now Definition~\ref{defn.main} identifies them with central simple \emph{super}algebras. The major difference is that not every such algebra is a matrix algebra. The complete classification is the following: there are the matrix algebras $\Mat_{p|q}(\bC) \cong \Mat_{q|p}(\bC)$, indexed by an (unordered!) pair $(p|q)$, which is the superdimension of an irreducible module; and there are the algebras $\Mat_k(\Cliff(1))$, where $\Cliff(1) = \bC\langle x\rangle / (x^2 = -1)$ is the \define{Clifford algebra} on one fermionic generator $x$ (and there is only one index $k \in \bN$).

These two sets correspond to the two classes in the \define{super Brauer group} of $\bC$, which is the group of gravitational anomalies for $(0{+}1)$-dimensional topological orders. The anomaly truly is gravitational. Fermionic theories (if they are unitary) can only be placed on manifolds with spin structure. But a topological order with operator algebra $\Mat_k(\Cliff(1))$ does not have a well-defined partition function: on circles with odd spin structure, the ``partition function'' is not a number but an element of the odd line $\bC^{0|1}$, and on circles with even spin structure, the ``partition function'' changes sign under the canonical automorphism of the spin structure. The anomaly itself can be understood as the invertible $(1{+}1)$-dimensional \define{Arf} theory, also called the \define{Majorana chain}.

These examples can be mixed. One can, for example, allow fermions and also a time-reversal symmetry, enjoying either $\sigma^2 = \id$ or $\sigma^2 = (-1)^{F}$.
The classification is then given by the appropriate \define{Galois supercohomology} group, a notion which is straightforward to write down but does not seem to have been studied in the literature.

\subsection{$(1{+}1)$-dimensional topological orders}\label{subsec.1+1d}

A $(1{+}1)$-dimensional topological order is a multifusion ($1$-)category with trivial centre. Theorem~\ref{thm.braided} implies that a $(1{+}1)$-dimensional topological with a nondegenerate local ground state is trivial. This suggests that a general $(1{+}1)$-dimensional topological order is determined, as a product state, by its local ground state degeneracy; compare 
\cite{PhysRevB.83.035107,PhysRevB.84.165139,PhysRevB.84.235128,KWZ1,KLWZZ}.
We will see that this is essentially correct, with some important caveats.

Suppose that $\cA$ is a $(1{+}1)$-dimensional topological order. 
The algebra $\Omega\cA$ of $0$-dimensional operators is a commutative separable finite-dimensional algebra over $\bC$. Any such algebra is automatically \emph{canonically} isomorphic to a direct sum of copies of $\bC$, where the sum is indexed by the \define{spectrum} $\Spec(\Omega\cA) = \hom(\Omega\cA,\bC)$, where the hom set is in the category of commutative $\bC$-algebras.

Arbitrarily choose a point $s \in \Spec(\Omega\cA)$. This choice is equivalent to a choice of indecomposable projector $\delta_s \in \Omega\cA$, or equivalent a simple direct summand $\mathbf 1_s = \delta_s \mathbf 1 \delta_s$ of the unit object $\mathbf 1 \in \cA$. This object $\mathbf1_s$ is automatically a separable associative algebra object internal to $\cA$.

We will condense this algebra $\mathbf 1_s$. In the abstract language of \cite{GJFcond}, this means to use $\mathbf 1_s$ to build a condensation monad supported by $\cA \in \Mod(\cA)$, and to take the image of that condensation monad. Concretely, it means to consider the category $\cM_s = \Mod_\cA(\mathbf 1_s)$ of $\mathbf 1_s$-module-objects in $\cA$. This $\cM_s$ is a nonzero dualizable $\cA$-module, and the \define{condensation} of $\mathbf 1_s$ is the monoidal category $\End_\cA(\cM_s)$, and $\cM_s$ is a Morita equivalence $\cA \simeq \End_\cA(\cM_s)$ as shown in the proof of Theorem~\ref{thm.anomalousTQFT}.
Since $\cM_s = \Mod_\cA(\mathbf 1_s)$, the monoidal category $\End_\cA(\cM_s)$ is equivalent to the category $\cat{Bimod}_\cA(\mathbf 1_s)$ of $\mathbf 1_s$-bimodule-objects in $\cA$. Physically, $\End_\cA(\cM_s)$ describes a new $(1{+}1)$-dimensional topological order formed from $\cA$ by condensing the anyon $\mathbf 1_s$, and $\cM$ describes an interface between the new and old topological orders.

What are the $0$-dimensional operators in $\End_\cA(\cM_s)$? The equivalence $\End_\cA(\cM_s) \simeq \cat{Bimod}_\cA(\mathbf 1_s)$ identifies the $0$-dimensional operators in $\End_\cA(\cM_s)$ with the endomorphisms of $\mathbf 1_s$ as a $\mathbf 1_s$-bimodule. But $\mathbf 1_s$ is simple, and so this endomorphism ring is just $\bC$. By Theorem~\ref{thm.braided}, $\End_\cA(\cM)$ must be trivial. But then $\cA \cong \End(\cM_s)$, the endomorphisms of $\cM \in \cat{KarCat}_\bC$.

As a category, how does $\cM_s$ look? A $\mathbf 1_s$-module is a bimodule between $\mathbf 1_s$ and the unit $\mathbf 1$, which decomposes as a sum over $\Spec(\Omega\cA)$. This implies that $\cM_s$ decomposes as a direct sum of categories indexed by $\Spec(\Omega\cA)$:
$$ \cM_s = \bigoplus_{s' \in \Spec(\Omega\cA)} \cM_{s,s'}$$
where
$$ \cM_{s,s'} = \{\text{$\delta_s$-$\delta_{s'}$ bimodule objects in $\cA$}\}. $$
This means that $\End(\cM_s)$ decomposes as a ``matrix category'':
$$ \End(\cM_s) = \bigoplus_{s',s'' \in\Spec(\Omega\cA)} \hom( \cM_{s,s'}, \cM_{s,s''}).$$
On the other hand, without choosing a basepoint $s \in \Spec(\Omega\cA)$, we can canonically decompose
$$ \cA = \bigoplus_{s',s'' \in\Spec(\Omega\cA)}  \cM_{s',s''} $$
These two decompositions are identified in the equivalence $\cA \cong \End(\cM_s)$. Focussing on the endomorphisms of the unit, we find that 
$$ \Omega\cA \cong \Omega\End(\cM_s) = \bigoplus_{s' \in \Spec(\Omega\cA)} \End(\cM_{s,s'}) $$
as $\Spec(\Omega\cA)$-graded algebras. 
This implies that each summand category $\cM_{s,s'}$ is invertible in $\cat{KarCat}_\bC$.

Bosonically, the only invertible object in $\cat{KarCat}_\bC$ is the unit $\Vect_\bC$ itself. 
We learn, therefore, that
\begin{gather*}
\cM_s \cong \bigoplus_{\Spec(\Omega\cA)} \Vect_\bC
\end{gather*}
and $\cA = \End(\cM_s)$ is the corresponding $\Spec(\Omega\cA) \times \Spec(\Omega\cA)$ matrix category. This is the sense in which $\cA$ is determined by its local ground state degeneracy.

Fermionically, i.e.\ with $\cat{KarCat}_\bC$ replaced by the $2$-category $\cat{KarSCat}_\bC$ of Karoubi complete supercategories,
there are two invertible objects: the unit $\cat{SVec}_\bC$ and the supercategory of $\Cliff(1)$-modules. We can indicate which choice is $\cM_{s,s'}$ by equipping $\Spec(\Omega\cA)$ with a $\bZ_2$-valued function. Even better is to recognize the $\bZ_2$ of invertible objects in $\cat{KarSCat}_\bC$ as the bottom ``Majorana'' layer of extended supercohomology $\SH^\bullet$, so that this $\bZ_2$-valued function is really a class $\alpha \in \SH^2(\Spec(\Omega\cA))$. Extended supercohomology is due to \cite{WangGu2017}; see \S\S5.4--5.5 of
\cite{MR3978827} for definitions and the relation to the Morita theory of Clifford algebras.

How canonically can $\cA$ be recovered from the set $\Spec(\Omega \cA)$ and, in the fermionic case, this $\bZ_2$-valued function or supercohomology class? Two issues present themselves. First, in order to construct $\cM_s$ and thereby to identify $\cA$ with its endomorphism category, we needed to choose a ``base point'' $s \in \Spec(\Omega\cA)$. This choice might not be preserved by a flavour or time-reversal symmetry. Indeed, we expect to find some sort of 't Hooft anomalies when studying topological enriched by symmetry, and there would be no room for such anomalies if $\cA$ were fully-canonically determined by $\Spec(\Omega\cA)$.
 Second, in the fermionic case, the supercohomology class vanishes at the basepoint $s$, since $\cM_s = \cat{Bimod}_\cA(\delta_s\mathbf 1) \cong \SVect_\bC$ and not $\Mod(\Cliff(1))$.
 
 Handling the latter issue first, note that, since $\Mod(\Cliff(1))$ is invertible, for any supercategory $\cM$, 
 $$ \End(\cM) = \End\bigl(\Mod(\Cliff(1)) \boxtimes \cM\bigr).$$
This means that the actual function $\alpha : \Spec(\Omega\cA) \to \bZ_2$ is not needed, but only its class modulo changing $\alpha(s') \mapsto \alpha(s')+1$ globally. Anticipating the result of further analysis, the cohomological way to say this is that we require only the class $[\alpha]$ as an element of \define{reduced supercohomology} $\widetilde{\SH}{}^2(\Spec(\Omega\cA)) = \SH^2(\Spec(\Omega\cA)) / \SH^2(\pt)$. 

In order to investigate the former issue, let us analyze bosonic $(1{+}1)$-dimensional topological orders enhanced by either a ($\bC$-linear) $\bZ_2$ flavour symmetry or a ($\bC$-antilinear) $\bZ_2^T$ time-reversal symmetry. Galois descent, mentioned in \S\ref{subsec.0+1d}, identifies time-reversal-symmetric $(1{+}1)$-dimensional topological orders $(\cA, \bZ_2^T\text{ action})$ with  $(1{+}1)$-dimensional topological orders $\cA_\bR$ defined over $\bR$.

If the topological order $\cA$ has a unique local ground state, then Theorem~\ref{thm.braided} continues to apply: there is a canonical equivalence $\cA \cong \Vect_\bC$, preserved by any symmetry, and so the $\bZ_2$ or $\bZ_2^T$ symmetry is trivial. (More precisely, the $\bZ_2^T$ symmetry acts on $\Vect_\bC$ by complex conjugation.) This should be contrasted with the case of absolute $(1{+}1)$-dimensional topological phases. A $(1{+}1)$-dimensional phase with a unique ground state may be  protected from triviality by a $G$-symmetry, i.e.\ it  may by a \define{$G$-SPT}. The classification of $(1{+}1)$-dimensional $G$-SPTs is well-known to be given by the group cohomology $\H^2(G;\bC^\times)$. When $G = \bZ_2$, this group is trivial. But when $G = \bZ_2^T$, i.e.\ when $G$ acts nontrivially on the coefficients $\bC^\times$, then $\H^2(\bZ_2^T; \bC^\times) \cong \bZ_2$ is nontrivial. The two classes are the trivial $(1{+}1)$-dimensional phase and the Haldane chain mentioned earlier.

Suppose next that the set $\Spec(\Omega\cA)$ of local ground states has order $2$. This set is acted on by the $\bZ_2$- or $\bZ_2^T$-symmetry. Consider first the case when the action is trivial.
If we cared about absolute phases, we would find a $G$-SPT at each ground state: in the $\bZ_2$ case there is no data, but in the $\bZ_2^T$ case there would be $2^2$ choices. But for our topological orders, the analysis proceeds just like in the fermionic case studied above: the only information in the topological order is the ``relative phase'' between the ground states, so in the $\bZ_2$ flavour case, there is only the trivial option, and in the $\bZ_2^T$ case, the $2^2$ absolute choices collapse to $2$ possible topological orders. 

Finally, suppose that $\Spec(\Omega\cA)$ is a set of size two, exchanged by the $\bZ_2$- or $\bZ_2^T$-action. It is not hard to show in the case of absolute phases that there are no further choices: there is a unique absolute $(1{+}1)$-dimensional phase with two ground states exchanged by $\bZ_2$- or $\bZ_2^T$-symmetry. For topological orders with $\bZ_2^T$-symmetry, this result remains true. Then $\Spec(\Omega\cA)$ with its time-reversal symmetry corresponds to ``$\bC$ as a $\bR$-algebra,'' and the only $\bR$-linear topological order with this ring of zero-dimensional operators is the category of $\bR$-linear $\bC$-$\bC$ bimodules. But for the $\bZ_2$ flavour symmetry, there are two choices, parameterized by $\H^3(\bZ_2; \bC^\times) \cong \bZ_2$. To construct them, recall that a class $\alpha \in \H^3(\bZ_2; \bC^\times)$ determines a fusion category $\Vect^\alpha_\bC[\bZ_2]$ with $\bZ_2$ fusion rules. (The nontrivial class provides the fusion category for the \define{semion}. The two fusion categories are equal as linear categories.) Let $1$ and $X$ denote the simple objects in $\Vect^\alpha_\bC[\bZ_2]$. Then $Y \mapsto X \otimes Y$ does not define a $\bZ_2$-action on $\Vect^\alpha_\bC[\bZ_2]$ when $\alpha \neq 0$. But it does determine a $\bZ_2$-action $f(-) \mapsto X \otimes f(X \otimes (-))$ on the category of endomofunctors of $\Vect^\alpha_\bC[\bZ_2]$, and the two choices for $\alpha$ provide different actions. This is an analogue of the following fact about twisted group algebras. Given a finite group $G$ and a class $\alpha \in \H^2(G;\bC^\times)$, construct the twisted group algebra $\bC^\alpha[G]$; then $G$ does not act by multiplication on $\bC^\alpha[G]$, but does 
acts on the matrix algebra $\Mat(\bC^\alpha[G])$.

To summarize these examples, and to state the complete classification, it is best to switch to cohomological language.  A (extraordinary) cohomology theory $h^\bullet$ assigns abelian groups $h^\bullet(X)$ to topological spaces $X$. If $G$ is a finite group, then we can consider also an \define{equivariant} cohomology theory $h_G^\bullet$ defined on the category of $G$-spaces, i.e.\ topological spaces with a $G$-action. (Constructing equivariant extensions of a extraordinary cohomology theory $h^\bullet$ is typically a difficult thing, but when the spectrum representing $h^\bullet$ is coconnective, then we may use \define{Borel} equivariant cohomology $h_G^\bullet(X) = h^\bullet(X\sslash G)$, where $X$ is a $G$-space and $X \sslash G = X \times_G EG$ for some contractible space $EG$ with a free $G$-action. One reason Borel-equivariant cohomology is safe  is because the requisite spectral sequences has very good convergence in the coconnective case.)
For example, absolute bosonic $(1{+}1)$-dimensional  phases enriched by $G$-symmetry are classified by equivariant ordinary cohomology $\H^2_G(\Spec(\Omega\cA);\bC^\times)$. Absolute fermionic $(1{+}1)$-dimensional phases are classified by equivariant supercohomology $\SH^2_G(\Spec(\Omega\cA))$.

There is a unique map $\pi : X \to \pt$ for each space $X$, and if $X$ is a $G$-space, then this map is $G$-equivariant. This determines maps
$$ \pi^* : h^\bullet(\pt) \to h^\bullet(X), \qquad \pi^* : h^\bullet_G(\pt) \to h^\bullet_G(X).$$
The first of these is always an injection (if $X$ is nonempty), and \define{reduced $h$-cohomology} is by definition the quotient
$$ \widetilde{h}^\bullet(X) = \frac{h^\bullet(X)}{h^\bullet(\pt)}.$$
Indeed, any choice of  basepoint $x \in X$ determines a splitting $h^\bullet(X) \cong \widetilde{h}^\bullet(X) \oplus h^\bullet(\pt)$.
But for $G$-spaces, $\pi^*$ may not be an injection. (It is an injection of there is a $G$-equivariant map $\pt \to X$, i.e.\ if $X$ contains any $G$-fixed points, as then $h_G^\bullet(X)$ contains $h_G^\bullet(\pt)$ as a direct summand.) Because of this, one should not take the ordinary quotient above in order to define ``reduced equivariant cohomology,'' but rather one must take a ``homotopy quotient'':
\begin{definition}
  Let $h_G^\bullet$ be an equivariant cohomology theory. If $X$ is a $G$-space, then the group $\widetilde h_G^\bullet(X)$ of \define{reduced equivariant cohomology} classes on $X$ consist of  pairs $(\alpha,\beta)$ where $\alpha \in h_G^{\bullet+1}(\pt)$ and $\beta$ is trivialization of $\pi^*\alpha \in h_G^{\bullet+1}(X)$. 
  I.e.\ in a cochain model, 
  $\alpha$ is a degree-$(\bullet+1)$ cocycle, 
  $\beta$ is a 
  degree-$\bullet$ cochain which is not a cocycle but rather solves $\d\beta = \pi^*\alpha$, and $(\alpha,\beta)$ is cohomologous to $(\alpha + \d a, \beta + \pi^* a + \d b)$.
\end{definition}
In the nonequivariant case, this definition recovers the ``quotient'' definition above: $\pi^*$ is an injection, so for any class $(\alpha,\beta) \in \widetilde h^\bullet(X)$, the class $\alpha$ is already trivializable in $h^{\bullet+1}(\pt)$; but the ambiguity in the choice of trivialization $\alpha = \d a$ means that a cocycle $\beta \in h^\bullet(X)$ is considered cohomologous to $\beta + \pi^* a$ for any cocycle $a \in h^\bullet(\pt)$.
But in general, rather than being a mere quotient, reduced equivariant cohomology participates in a long exact sequence, with connecting map $(\alpha,\beta) \mapsto \alpha$:
$$ \cdots \to h^\bullet_G(\pt) \overset{\pi^*}\to h^\bullet_G(X) \to \widetilde h_G^\bullet(X)   \to h^{\bullet+1}_G(\pt) \overset{\pi^*}\to \cdots $$
For example, when $X = G$ carries the free $G$-action (by multiplication), then the equivariant ordinary cohomology groups $\H^2_G(X; \bC^\times)$ and $\H^3_G(X;\bC^\times)$ vanish, regardless of how $G$ acts on the coefficients $\bC^\times$. Therefore the connecting map $\widetilde{\H}{}^2_G(X;\bC^\times) \to \H^3_G(\pt;\bC^\times)$ is an isomorphism. When $G = \bZ_2^T$ acting $\bC$-antilinearly on $\bC^\times$, the group $\H^3_G(\pt;\bC^\times)$ is trivial, but when $G = \bZ_2$, $\H^3_G(\pt;\bC^\times) = \bZ_2$. 

\begin{theorem}\label{thm.1+1d}
  Bosonic $(1{+}1)$-dimensional topological orders $\cA$ with $G$-symmetry are classified by a $G$-action on the set $\Spec(\Omega\cA)$ together with a class in reduced equivariant ordinary cohomology $\widetilde{\H}{}^2_G(\Spec(\Omega\cA); \bC^\times)$. Fermionic $(1{+}1)$-dimensional topological orders $\cA$ with $G$-symmetry are classified by a $G$-action on the set $\Spec(\Omega\cA)$ together with a class in reduced equivariant  supercohomology $\widetilde{\SH}{}^2_G(\Spec(\Omega\cA))$.
\end{theorem}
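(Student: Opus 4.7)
The plan is to bootstrap the nonequivariant classification already developed in this subsection to the $G$-equivariant setting, treating the bosonic and fermionic cases in parallel.

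First, I would forget the $G$-action and apply the preceding nonequivariant analysis to identify the underlying $\bC$-linear (super)category of $\cA$ with a matrix category $\End(\cM_s)$, where $s$ is a chosen basepoint of $X := \Spec(\Omega\cA)$ and $\cM_s = \bigoplus_{s' \in X} \cM_{s,s'}$ with each $\cM_{s,s'}$ an invertible object in $\cat{KarCat}_\bC$ (resp.\ $\cat{KarSCat}_\bC$). The $G$-action on $\cA$ permutes the minimal idempotents $\delta_{s'}\in\Omega\cA$, inducing the $G$-action on $X$; for time-reversing factors $G$ additionally acts $\bC$-antilinearly on hom spaces.

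Second, I would extract the cocycle. The $G$-action on $\cA$ specifies, for each $g\in G$, a (possibly antilinear) monoidal autoequivalence $g_*:\cA\to\cA$ together with coherence $2$-isomorphisms $g_*h_*\Rightarrow(gh)_*$ satisfying the usual associativity constraint up to an associator living in $\Omega^2\cA = \Omega\bC = \bC^\times$. Translating through the matrix-category identification, $g_*$ restricts to invertible (anti)linear equivalences $\cM_{s_1,s_2}\to\cM_{g\cdot s_1,g\cdot s_2}$. Because each $\cM_{s_1,s_2}$ is invertible in the relevant $2$-category, this data is precisely a $G$-equivariant cochain on the action groupoid $X\sslash G$ with values in the $2$-group of invertible $\bC$-linear categories (classifying space $\rB^2\bC^\times$), or fermionically the $3$-group of invertible supercategories (represented by the Majorana layer of extended supercohomology $\SH^\bullet$, as recalled from \cite{WangGu2017, MR3978827}).

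Third, I would pin down the precise invariant. Two sources of gauge freedom produce the reduced (rather than unreduced) cohomology: changing the isomorphism $\cA\simeq\End(\cM_s)$, which modifies the cocycle by a coboundary, and changing the basepoint $s\in X$, which the $G$-action does not fix in general. When a $G$-fixed basepoint exists, the invariant lies in $\H^2_G(X;\bC^\times)$ (resp.\ $\SH^2_G(X)$); absent a fixed basepoint, the failure is encoded by a class $\alpha\in\H^3_G(\pt;\bC^\times)$ (resp.\ $\SH^3_G(\pt)$) whose pullback to $X$ is trivialized by the cocycle. This pair $(\alpha,\beta)$ is exactly an element of $\widetilde h^2_G(X)$ in the sense of the Definition preceding the theorem, and fits the long exact sequence displayed there. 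Surjectivity onto $\widetilde h^2_G(X)$ follows by gluing: given $(\alpha,\beta)$, the trivialization $\beta$ provides the coherence data needed to assemble a twisted $X\times X$-graded matrix (super)category on which $G$ acts with coherence anomaly $\alpha$.

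The main obstacle will be matching the $2$-categorical coherence data on the $G$-action side with an explicit cochain model of $\widetilde h^2_G(X)$: in particular, verifying that the connecting homomorphism $\widetilde h^2_G(X)\to h^3_G(\pt)$ computes the $(2{+}1)$-dimensional gravitational/'t Hooft anomaly obtained by circle-compactifying the anomalous TQFT of Theorem~\ref{thm.anomalousTQFT}, and fermionically keeping careful track of the Koszul signs and of the two distinct Majorana/Gu--Wen layers of $\SH^\bullet$. The ``reduced'' prescription is exactly what is needed because topological orders are defined modulo invertible phases, so any purely $\pt$-level class $\alpha$ must be paired with its trivialization over $X$ in order to descend to a well-defined invariant of $\cA$.
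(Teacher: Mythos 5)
Your proposal is correct and follows essentially the same route as the paper's proof: both reduce to the Morita trivialization of $\cA$ and the matrix-category decomposition $\cA \cong \End(\cM_s)$ established at the start of the subsection, and both identify the classifying data with a pair $(\alpha,\beta)$ where $\alpha \in \H^3_G(\pt;\bC^\times)$ (resp.\ $\SH^3_G(\pt)$) is the 't Hooft anomaly and $\beta$ is a trivialization of $\pi^*\alpha$ over $\Spec(\Omega\cA)$, i.e.\ a reduced equivariant cohomology class. The only difference is packaging: the paper routes through Theorem~\ref{thm.anomalousTQFT}, so that a $G$-symmetry is by definition a $G$-action on the pair (invertible object of $\cat{Mor}_1$, its pointing by ``$\cA$ as an $\cA$-module''), whence $\alpha$ is literally the action on the trivialized Morita class (classified by the automorphism $3$-group of the unit, which is $\bC^\times$ in degree $2$, resp.\ the supercohomology spectrum) and $\beta$ the compatible action on the module $\bigoplus_{\Spec(\Omega\cA)}\Vect_\bC$ --- making the $(\alpha,\beta)$ splitting automatic rather than something you must extract by hand from coherence cochains, basepoint dependence, and gauge freedoms as in your second and third steps.
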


\begin{proof}
  The equivalence in Theorem~\ref{thm.anomalousTQFT} between topological orders and anomalous TQFTs is universal, and holds in the presence of any symmetry. In particular, a $G$-action on the topological order $\cA$ is the same as a $G$-action on the invertible object in $\cat{Mor}_1(\cat{KarCat}_\bC)$ (or $\cat{Mor}_1(\cat{KarSCat}_\bC)$ in the fermionic case) represented by $\cA$, together with an extension of that action to ``$\cA$ as an $\cA$-module.''
  
  The analysis at the start of this subsection implies that $\cA$ is equivalent in $\cat{Mor}_1(\cat{KarCat}_\bC)$ to the trivial object $\Vect_\bC$ (or $\SVect_\bC$ in the fermionic case). The automorphism 3-group of the trivial object $\Vect_\bC \in \cat{Mor}_1(\cat{KarCat}_\bC)$ is the group $\cat{KarCat}_\bC^\times$ of invertible objects in $\cat{KarCat}_\bC$, which is just $\bC^\times$ in degree $2$; in the fermionic case, it is the classifying space of supercohomology (compare \S5.5 of \cite{MR3978827}). Thus actions of $G$ on this object are classified by $\alpha \in \H^3_G(\pt;\bC^\times)$ in the bosonic case and by $\alpha \in \SH^3_G(\pt)$ in the fermionic case.
    
  The analysis at the start of this subsection furthermore says that, after applying the Morita trivialization of $\cA$, the $\cA$-module $\cA$ becomes equivalent to $\bigoplus_{\Spec(\Omega\cA)} \Vect_\bC$ in the bosonic case. In the fermionic case, it becomes equivalent to a sum indexed by $\Spec(\Omega\cA)$ of copies of $\SVect_\bC$ and of $\Mod(\Cliff(1))$. $G$-actions on this category compatible with the chosen $G$-action on the indexing set $\Omega\cA$ with the chosen $G$-action $\alpha$ on the trivial object are classified by trivializations $\beta$ of $\pi^*\alpha$.
\end{proof}

Note that, in the proof of Theorem~\ref{thm.1+1d}, the connecting map $\widetilde h^2_G(\Spec(\Omega\cA)) \to h^3_G(\pt)$ takes a $G$-symmetry enhanced topological order $\cA$ to the $G$-SPT which describes the ``anomaly'' of the anomalous TQFT. In other words, this connecting map assigns to the topological order its \define{'t Hooft anomaly}.

For any space $X$ with finite homotopy, a cohomology class $\beta \in \H^2(X; \bC^\times)$ (respectively $\SH^2(X)$) is precisely the data needed in order to define a $(1{+}1)$-dimensional bosonic (respectively fermionic) \define{topological sigma model} with target $X$ \cite{PhysRevB.100.045105}, via a Dijkgraaf--Witten type construction. This construction always produces absolute TQFTs. The topological order constructed from a reduced cohomology class can be thought of, by analogy, as an \define{anomalous topological sigma model}. Thus Theorem~\ref{thm.1+1d} can be summarized as: Each $(1{+}1)$-dimensional topological order $\cA$ is, canonically, an anomalous topological sigma model with target $\Spec(\Omega\cA)$. In the presence of $G$ flavour symmetry, $\Spec(\Omega\cA)$ is thought of as a $G$-set, and anomalous topological sigma models are classified by reduced $G$-equivariant cohomology. In the presence of a time-reversing $\bZ_2^T$, Galois descent says that $\Spec(\Omega\cA) \sslash G$ is best thought of not as a $\bZ_2$-set, but rather as the  \define{scheme} $\Spec(\Omega\cA_\bR)$, and anomalous sigma models are classified by \define{reduced Galois cohomology}.

\subsection{$(2{+}1)$-dimensional topological orders} \label{subsec.2+1d}

Corollary~\ref{cor.braidedcentre} implies that $(2{+}1)$-dimensional topological orders with nondegenerate local ground states are classified by braided fusion ($1$-)categories with trivial braided centre. This  recovers the well-known classification (compare \cite{MR1002038,MR1146945,MR1016869,MR1104414,MR1199171,MR2200691,10.1093/nsr/nwv077}) that $(2{+}1)$-dimensional topological orders are classified by unitary modular tensor categories, except for the issue of unitarity. By definition, a \define{modular} tensor category is a braided fusion category with trivial centre which is equipped with a ribbon structure (the axioms of which we will not review). The ribbon structure allows the corresponding (anomalous) $(2{+}1)$-dimensional TQFT to be placed on any oriented manifold: the TQFT is \emph{isotropic}, and does not require a spin structure or framing in order to be defined. This ribbon structure is canonically determined if the category is equipped with a unitary structure (the axioms of which we also will not review). In other words, to the extent that unitarity can be physically justified, in the $(2{+}1)$-dimensional case the corresponding TQFT is automatically isotropic. Note that this is somewhat surprising, as microscopic models of topological orders are usually defined on lattices, and are essentially never isotropic: any isotropy of the topological order only emerges in the low energy. Similarly in higher dimensions, the isotropic should be a theorem and not part of the definition. So far we are lacking a sufficient understanding of ``unitary higher categories'' to produce such a theorem.

When there are multiple local ground states, our analysis follows (the beginning of) \S\ref{subsec.1+1d}. Choose any local ground state $s \in \Spec(\Omega^2\cA)$. This choice corresponds to a simple sub-object of the identity line operator $\mathbf 1 \in \Omega\cA$. This object may be condensed, and the resulting system will have a unique local ground state, and so will be described by a braided fusion category $\cB_s$ with trivial centre. For any pair $s',s'' \in \Spec(\Omega^2\cA)$, the $2$-category of surface operators (and interfaces thereof) from $s'$ to $s''$ determines a Morita equivalence $\Sigma\cB_{s'} \simeq \Sigma\cB_{s''}$ in $\cat{Mor}_1(2\cat{KarCat}_\bC)$, or equivalently a Morita equivalence (up to Morita equivalence thereof) $\cB_{s'} \simeq \cB_{s''}$ in $\cat{Mor}_2(\cat{KarCat}_\bC)$. Thus arbitrary $(2{+}1)$-dimensional topological orders are classifiable provided we can classify braided fusion categories with trivial centre, and their Morita equivalences.

The actual classification of braided fusion categories with trivial centre is almost surely wild, but the classification up to Morita equivalence is reasonably well understood. Following \cite{MR3039775}, we will write $\cW$ for the group of Morita equivalence classes of braided fusion categories with trivial centre, and refer to it as the \define{bosonic Witt group}. If we were to work instead with supercategories, we would have the \define{fermionic Witt group} $s\cW$ studied (under the name ``slightly degenerate Witt group'') in \cite{MR3022755}. The name  comes from the fact that any anisotropic metric abelian group  determines a $(2{+}1)$-dimensional topological phase (indeed, an abelian Chern--Simons theory), and in this way $\cW$ contains as a subgroup the ``classical Witt group'' of aniostropic metric abelian groups (\S5.3 of \cite{MR3039775}). The main result of  \cite{MR3022755} is a rather complete description of $s\cW$, which is slightly easier to analyze than is $\cW$ itself. The map $\cW \to s\cW$ that forgets the ``bosonicness'' of a bosonic topological order is considered in \S5.3 of \cite{MR3022755}, where it is shown to have kernel $\bZ_{16}$. The cokernel is not known.

\begin{remark}\label{remark.fermioniocWittgroup}
Because we will use it in \S\ref{subsec.3+1d}, let us observe that $\{$trivial-centre braided fusion supercategories up to Morita equivalence$\}$ is not best thought of as a set, but rather as $4$-groupoid: the $1$-morphisms are the super Morita equivalences themselves, the $2$-morphisms are the equivalences of equivalences, etc. This $4$-groupoid is symmetric monoidal, and so defines a spectrum. We will write $\underline{s\cW}$ for this spectrum, indexed so that the $4$-morphisms contribute homotopy in degree $0$. The homotopy groups of this spectrum are then concentrated in degrees $\pi_{-4},\dots,\pi_0$, and $\pi_{-n}\underline{s\cW} = \{$equivalence classes of invertible $(n{-}1)$-supercategories$\} = \{$Morita equivalence classes of $(n{-}1)$-dimensional fermionic topological orders$\}$. Summarizing the results above, we have:
\begin{gather*} \pi_{-4} \underline{s\cW} = s\cW,  \quad
\quad \pi_{-3} \underline{s\cW} = 0, \quad
\pi_{-2} \underline{s\cW} = \bZ_2, \\
\pi_{-1} \underline{s\cW} = \bZ_2, \quad
\pi_0 \underline{s\cW} = \bC^\times.\end{gather*}
This spectrum $\underline{s\cW}$ is an extension to degree $-4$ of the spectrum $\SH$ of supercohomology, whose homotopy groups were $\pi_{-2} = \pi_{-1} = \bZ_2$ and $\pi_0 = \bC^\times$.

There is similarly a $4$-groupoid built from bosonic topological orders, corresponding to a spectrum $\underline{\cW}$. Its homotopy groups are
$$ \pi_{-3}\underline{\cW} = \cW, \qquad \pi_{-3} = \pi_{-2} = \pi_{-1} = 0, \quad \pi_0 = \bC^\times.$$
The relation between these two spectra is the following. $\underline{s\cW}$ carries a natural action by the ``$1$-form group'' $\rB\bZ_2$. Indeed, $\rB\bZ_2$ acts on the symmetric monoidal category $\SVec_\bC$ (c.f.\ \cite{MR3623677}), and so on the collection of all $n$-supercategories. (The only data in the action of $\rB\bZ_2$ on $\SVec_\bC$ is the action of the nontrivial $1$-morphism in $\rB\bZ_2$, which acts by the symmetric monoidal natural transformation of the identity functor ``$(-1)^f$'' that to a supervector space $\bC^{p|q}$ assigns the automorphism given by the $(p+q)\times(p+q)$ block matrix $\bigl( \begin{smallmatrix} +1 & \\ & -1 \end{smallmatrix}\bigr)$.) 

The homotopy fixed points of this $\rB\bZ_2$-action on the $4$-groupoid of fermionic topological orders is the $4$-groupoid of bosonic topological orders. Just as twisted-equivariant ordinary cohomology calculates the fixed points of a module, so too homotopy fixed points of actions on spectra are calculated by twisted-equivariant extraordinary cohomology.
Specifically, for $n \in \{-4,\dots,0\}$, 
$$ \pi_{-n}\underline{\cW} = \H^n_{\rB\bZ_2}(\pt; \underline{s\cW}).$$
The right-hand side may be calculated by using an Atiyah--Hirzebruch spectral sequence. (The spectral sequence converges because $\underline{s\cW}$ has bounded homotopy.)
$$ E^{p,q}_2 = \H^p_{\rB\bZ_2}(\pt; \pi_{-q} \underline{s\cW}) \Rightarrow \H^{p+q}_{\rB\bZ_2}(\pt; \underline{s\cW})$$
It is a standard fact that $\H^\bullet_{\rB\bZ_2}(\pt;\bZ_2)$ is freely generated over the Steenrod algebra by a generator ``$t$'' in degree $2$; as a polynomial algebra, it is $\bZ_2[t, \Sq^1t, \Sq^2\Sq^1t,\dots]$. The $\bC^\times$ cohomology can be computed from this, and so in low degree the $E_2$ page of the spectral sequence reads:
$$
\begin{tikzpicture}[anchor=base]
\path
(0,0) node {$\bC^\times$} ++(.75,0) node {$0$} ++(.75,0) node {$\bZ_2$} ++(.75,0) node {$0$} ++(.75,0) node {$\bZ_4$} ++(.75,0) node {$\bZ_2$}++(.75,0) node {$\cdots$}
(0,.5) node {$\bZ_2$} ++(.75,0) node {$0$} ++(.75,0) node {$\bZ_2$} ++(.75,0) node {$\bZ_2$} ++(.75,0) node {$\bZ_2$} ++(.75,0) node {$\bZ_2^2$}++(.75,0) node {$\cdots$}
(0,1) node {$\bZ_2$} ++(.75,0) node {$0$} ++(.75,0) node {$\bZ_2$} ++(.75,0) node {$\bZ_2$} ++(.75,0) node {$\bZ_2$} ++(.75,0) node {$\bZ_2^2$}++(.75,0) node {$\cdots$}
(0,1.5) node {$0$} ++(.75,0) node {$0$}++(.75,0) node {$0$}++(.75,0) node {$0$}++(.75,0) node {$0$}++(.75,0) node {$0$} ++(.75,0) node {$\cdots$}
(0,2) node {$s\cW$}  ++(.75,0) node {$0$}  ++(.75,0) node {$s\cW[2]$} ++(.75,0) node {$\cdots$}
;
\draw[->] (-.75,-.125) -- ++(5.25,0);
\draw[->] (-.4,-.5) -- ++(0,3);
\path  (0,-.5) node {$0$} ++(.75,0) node {$1$} ++(.75,0) node {$2$} ++(.75,0) node {$3$} ++(.75,0) node {$4$} ++(.75,0) node {$5$} ++(.75,0) node {$p$}
(-.75,0) node {$0$} ++(0,.5) node {$1$} ++(0,.5) node {$2$} ++(0,.5) node {$3$} ++(0,.5) node {$4$} ++(0,.5) node {$q$}
;
\end{tikzpicture}
$$
If there were no twisting, then the $d_2$ differentials would be
\begin{gather*}
d_2^{p,2} = \Sq^2 :  \H^p_{\rB\bZ_2}(\pt;\bZ_2) \to  \H^{p+2}_{\rB\bZ_2}(\pt;\bZ_2), \\
d_2^{p,1} = (-1)^{\Sq^2} : \H^p_{\rB\bZ_2}(\pt;\bZ_2) \to \H^{p+2}_{\rB\bZ_2}(\pt;\bC^\times).
\end{gather*}
Instead, the twisted differentials are
\begin{gather*}
 \tilde d_2^{p,2} = \Sq^2 + t, \\
 \tilde d_2^{p,1} = (-1)^{\Sq^2+ t}.
\end{gather*}
The nonzero differentials are:
$$
\begin{tikzpicture}[anchor=base]
\path
(0,0) node {$\bC^\times$} ++(.75,0) node {$0$} ++(.75,0) node (b2) {$\bZ_2$} ++(.75,0) node {$0$} ++(.75,0) node {$\bZ_4$} ++(.75,0) node {$\bZ_2$}++(.75,0) node (e2) {$\cdots$}
(0,.5) node (b1) {$\bZ_2$} ++(.75,0) node {$0$} ++(.75,0) node (a2) {$\bZ_2$} ++(.75,0) node {$\bZ_2$} ++(.75,0) node (e1) {$\bZ_2$} ++(.75,0) node (c2) {$\bZ_2^2$} ++(.75,0) node (d2) {$\cdots$}
(0,1) node (a1) {$\bZ_2$} ++(.75,0) node {$0$} ++(.75,0) node {$\bZ_2$} ++(.75,0) node (c1) {$\bZ_2$} ++(.75,0) node (d1) {$\bZ_2$} ++(.75,0) node {$\bZ_2^2$}++(.75,0) node {$\cdots$}
(0,1.5) node {$0$} ++(.75,0) node {$0$}++(.75,0) node {$0$}++(.75,0) node {$0$}++(.75,0) node {$0$}++(.75,0) node {$0$} ++(.75,0) node {$\cdots$}
(0,2) node {$s\cW$}  ++(.75,0) node {$0$}  ++(.75,0) node {$s\cW[2]$} ++(.75,0) node {$\cdots$}
;
\draw[->] (-.75,-.125) -- ++(5.25,0);
\draw[->] (-.4,-.5) -- ++(0,3);
\path  (0,-.5) node {$0$} ++(.75,0) node {$1$} ++(.75,0) node {$2$} ++(.75,0) node {$3$} ++(.75,0) node {$4$} ++(.75,0) node {$5$} ++(.75,0) node {$p$}
(-.75,0) node {$0$} ++(0,.5) node {$1$} ++(0,.5) node {$2$} ++(0,.5) node {$3$} ++(0,.5) node {$4$} ++(0,.5) node {$q$}
;
\draw[thick] 
(a1.mid) -- (a2.mid)
(b1.mid) -- (b2.mid)
(c1.mid) -- (c2.mid)
(d1.mid) -- (d2.mid)
(e1.mid) -- (e2.mid)
;
\end{tikzpicture}
$$
and so on the $E_3$ page we see:
$$
\begin{tikzpicture}[anchor=base]
\path
(0,0) node {$\bC^\times$} ++(.75,0) node {$0$} ++(.75,0) node {$0$} ++(.75,0) node {$0$} ++(.75,0) node {$\bZ_4$} ++(.75,0) node {$\bZ_2$}++(.75,0) node {$\cdots$}
(0,.5) node {$0$} ++(.75,0) node {$0$} ++(.75,0) node {$0$} ++(.75,0) node {$\bZ_2$} ++(.75,0) node {$0$} ++(.75,0) node  {$\cdots$}
(0,1) node {$0$} ++(.75,0) node {$0$} ++(.75,0) node {$\bZ_2$} ++(.75,0) node {$0$} ++(.75,0) node {$0$} ++(.75,0) node {$\cdots$}
(0,1.5) node {$0$} ++(.75,0) node {$0$}++(.75,0) node {$0$}++(.75,0) node {$0$}++(.75,0) node {$0$} ++(.75,0) node {$\cdots$}
(0,2) node {$s\cW$}  ++(.75,0) node {$0$} ++(.75,0) node {$\cdots$}
;
\draw[->] (-.75,-.125) -- ++(5.25,0);
\draw[->] (-.4,-.5) -- ++(0,3);
\path  (0,-.5) node {$0$} ++(.75,0) node {$1$} ++(.75,0) node {$2$} ++(.75,0) node {$3$} ++(.75,0) node {$4$} ++(.75,0) node {$5$} ++(.75,0) node {$p$}
(-.75,0) node {$0$} ++(0,.5) node {$1$} ++(0,.5) node {$2$} ++(0,.5) node {$3$} ++(0,.5) node {$4$} ++(0,.5) node {$q$}
;
\end{tikzpicture}
$$
Whatever is on the $E_\infty$ page in total degree $4$ will compile (via an extension) into the bosonic Witt group $\cW$. Thus, in order for the kernel of $\cW \to s\cW$ to be a $\bZ_{16}$, the $\bZ_2$s in bidegrees $(3,1)$ and $(2,2)$ must survive to $E_\infty$. The only question in this range of degrees is whether $\tilde d_5 : s\cW \to \bZ_2$ vanishes or not.
 If this $\tilde d_5$ does vanish, then $\cW \to s\cW$ is surjective, and the twisted equivariant cohomology $\H^5_{\rB\bZ_2}(\pt; \underline{s\cW})$ is a $\bZ_2$. If this $\tilde d_5$ is nonzero, then $\cW \to s\cW$ has cokernel of order $2$ (since its image is $\ker \tilde d_5$), and  $\H^5_{\rB\bZ_2}(\pt; \underline{s\cW})$ vanishes.
 
The following final remark about the differential $\tilde d_5$ is due to 
D.\ Nykshych. Each element in in $s\cW$ is represented by a braided fusion category $\cC$ which is ``slightly degenerate'' in the sense that its braided centre is $\SVec_\bC$. It is conjectured, but not known, that every slightly degenerate braided fusion category $\cC$ has a ``minimal modular extension'' $\cC \subset \cC'$, with $\cC'$ a (bosonic) braided fusion category with trivial centre, and with $\cC'$ having twice the total dimension of $\cC$. 
It is not too hard to show that if $\cC$ has a minimal modular extension, then its Morita-equivalents do too, so that existence of $\cC'$ is a question about the class of $\cC$ in $s\cW$. Moreover, it is not too hard to show that the only possible obstruction to the existences of a minimal modular extension lives in $\H^5_{\rB\bZ_2}(pt; \bC^\times) \cong \bZ_2$. Finally, it is not too hard to show that $\cW \to s\cW$ is surjective if and only if every slightly degenerate braided fusion category admits a minimal modular extension. All together, we find that the differential  $\tilde d_5 : s\cW \to \H^5_{\rB\bZ_2}(pt; \bC^\times)= \bZ_2$ records the obstruction to minimal modular extensions: it sends a class $[\cC] \in s\cW$ to $0 \in \bZ_2$ if a minimal modular extension $\cC' \supset \cC$ exists, and to $1 \in \bZ_2$ if a minimal modular extension $\cC'$ does not exist. Thus the ``minimal modular extension conjecture'' (asserting that $\cC'$ always exists) is equivalent to the conjecture that $\tilde d_5 = 0$, or that $\H^5_{\rB\bZ_2}(\pt; \underline{s\cW}) = \bZ_2$.
\end{remark}

\subsection{$(3{+}1)$-dimensional topological orders} \label{subsec.3+1d}

A classification of $(3{+}1)$-dimensional topological orders was announced in \cite{PhysRevX.8.021074,PhysRevX.9.021005,PhysRevB.100.045105}. We will confirm that classification in this section, modulo a few small corrections and improvements.

Our strategy will be to repeat the analysis from \S\ref{subsec.1+1d} of $(1{+}1)$-dimensional topological orders, where we used the fact that $\Omega\cA$, being a separable commutative $\bC$-algebra, was canonically determined (as the algebra of functions) by its \define{spectrum} $\Spec(\Omega\cA) = \hom(\Omega\cA,\bC)$, the set of homomorphisms of commutative $\bC$-algebras. In the presence of a $G$-flavour symmetry, this spectrum was a $G$-set, and we used \emph{$G$-equivariant} reduced cohomology to classify $\cA$. In the presence of a $\bZ_2^T$ time-reversal symmetry, we could continue to think of $\Spec(\Omega\cA)$ as a $\bZ_2^T$-set, but we could also think of it as a \define{real scheme} $\Spec(\Omega\cA_\bR)$, where $\Omega\cA_\bR \subset \Omega\cA$ is the $\bR$-subalgebra fixed by the $\bZ_2^T$ symmetry; thinking in the latter way, the twisted equivariant cohomology became \define{Galois cohomology}.

It $\cA$ is instead a $(3{+}1)$-dimensional topologica lorder, we will work with $\Omega^2\cA$ in place of $\Omega\cA$. This 
 is a symmetric multifusion category, which is a natural categorification of ``separable commutative algebra.'' There is a theory of spectra of such categories, which is essentially a special case of P.\ Deligne's work on Tannakian reconstruction \cite{zbMATH03749168,MR1106898,MR1944506}; the interpretation in terms of ``categorical Galois extensions'' is due to \cite{MR3623677}. Summarizing a number of results, the statement of Tannakian reconstruction in the multifusion category case is:

\begin{theorem}\label{thm.Deligne}
A symmetric multifusion supercategory $\cC$ is canonically determined (as the category of representations) by its \define{spectrum} $\Spec(\cC) = \hom(\cC, \SVec_\bC)$, the finite groupoid of functors of symmetric monoidal supercategories. The set $\pi_0\Spec(\cC)$ of isomorphism classes of objects in this groupoid is $\Spec(\Omega\cC)$.

If $\cC$ is bosonic, then $\Spec(\cC)$ is equipped with an action of the \define{categorified Galois group} $\Gal(\SVec_\bC/\Vect_\bC) = \rB\bZ_2$, and $\cC$ is recovered canonically from $\Spec(\cC)$ with this action. \qed
\end{theorem}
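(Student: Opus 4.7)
The plan is to bootstrap from Deligne's Tannakian reconstruction in the fusion case up to the multifusion case, following a strategy parallel to the $(1{+}1)$-dimensional analysis of \S\ref{subsec.1+1d}. First I would decompose the unit $\mathbf 1 \in \cC$ as $\bigoplus_{s \in \Spec(\Omega\cC)} \mathbf 1_s$ into indecomposable simple summands, each of which is automatically a separable commutative algebra object. Using the symmetry of $\cC$, one checks that $\mathbf 1_s \otimes \mathbf 1_{s'} = 0$ whenever $s \neq s'$, so that $\cC$ decomposes as a direct sum $\cC = \bigoplus_s \cC_s$ of symmetric fusion supercategories $\cC_s = \Mod_\cC(\mathbf 1_s)$. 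This reduces the problem to reconstructing each fusion piece.

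For each fusion piece $\cC_s$, I would invoke Deligne's theorem \cite{MR1944506}: every symmetric fusion supercategory over $\bC$ admits a symmetric monoidal fiber functor $F_s : \cC_s \to \SVec_\bC$, and the tensor automorphisms of any such $F_s$ form a finite supergroup $G_s$ with $\cC_s \simeq \cat{Rep}_\bC(G_s)$. In these terms, $\hom(\cC_s, \SVec_\bC)$ is the connected groupoid $\rB G_s$, and therefore $\Spec(\cC) = \bigsqcup_s \rB G_s$ is a finite groupoid whose $\pi_0$ is $\Spec(\Omega\cC)$. The reconstruction then follows: the category of representations of the groupoid $\Spec(\cC)$ canonically assembles to $\cC$, each piece coming from the Tannakian equivalence $\cC_s \simeq \cat{Rep}_\bC(G_s)$.

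For the Galois descent statement in the bosonic case, I would use the fact that $\SVec_\bC$ is a categorified $\bZ_2$-Galois extension of $\Vect_\bC$ as formalized in \cite{MR3623677}: the nontrivial morphism in $\rB\bZ_2 = \Gal(\SVec_\bC/\Vect_\bC)$ acts on $\SVec_\bC$ by the parity automorphism $(-1)^f$, and $\Vect_\bC$ is recovered as the homotopy-fixed $2$-category $\SVec_\bC^{\rB\bZ_2}$. Postcomposition then induces a $\rB\bZ_2$-action on $\Spec(\cC) = \hom(\cC,\SVec_\bC)$ whenever $\cC$ is bosonic (i.e.\ enriched over $\Vect_\bC$), and categorical Galois descent recovers $\cC$ from $\Spec(\cC)$ together with this equivariance datum, applied component by component.

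The main obstacle, I expect, is making ``canonically determined'' fully precise without choosing a basepoint. Deligne's theorem is usually stated in a basepoint-dependent way --- one picks a fiber functor and produces $\cC_s \simeq \cat{Rep}_\bC(G_s)$ --- so producing a canonical inverse to $\cC \mapsto \Spec(\cC)$ requires upgrading to the stacky version in which one takes representations of the groupoid $\Spec(\cC)$ directly, with no preferred point in $\rB G_s$. Once that upgrade is in hand, the Galois-descent step reduces to checking that the $\rB\bZ_2$-action on $\Spec(\cC)$ induced by $(-1)^f$ on the target matches the ``super versus bosonic'' distinction on $\cC$, which is essentially the content of \cite{MR3623677}; I would invoke that formalism as a black box for the final step rather than reprove it.
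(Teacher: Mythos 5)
Your proposal is essentially the intended argument: the paper does not prove this theorem but quotes it as a summary of Deligne's super-Tannakian reconstruction \cite{MR1944506} together with the categorified-Galois-extension formalism of \cite{MR3623677}, and your assembly --- decompose into fusion blocks, apply Deligne's fiber-functor theorem to each to get $\Spec(\cC)\simeq\bigsqcup_s \rB G_s$, state reconstruction basepoint-freely as the evaluation equivalence $\cC\isom\cat{Fun}(\Spec(\cC),\SVec_\bC)$, and handle the bosonic case by $\rB\bZ_2$-homotopy fixed points for the $(-1)^f$-action --- is exactly that summary made explicit. One small misattribution: the vanishing $\mathbf 1_s\otimes\mathbf 1_{s'}=0$ for $s\neq s'$ holds in any multifusion category and does not by itself give $\cC=\bigoplus_s\cC_s$; what the symmetry actually buys is the vanishing of the off-diagonal components $\mathbf 1_s\otimes\cC\otimes\mathbf 1_{s'}$ (via $X\cong\mathbf 1_s\otimes\mathbf 1_{s'}\otimes X=0$), which is what collapses $\Mod_\cC(\mathbf 1_s)$ onto the diagonal block and yields the direct-sum decomposition you need.
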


The group $\rB\bZ_2$ is a group object in groupoids, not in sets, i.e.\ it is a \define{2-group}. It has only one object, and so a physicist might also call it a ``group of 1-form symmetries.'' The ``Galois'' action of $\rB\bZ_2$ on $\Spec(\cC)$ should be understood as \define{descent data}, making $\Spec(\cC)$ into a \define{categorified scheme over $\Vect_\bC$}. The fact that $\Spec(\cC)$ is merely a groupoid in the super case, but requires descent data in the bosonic case, is interpreted in \cite{MR3623677} to mean that $\SVec_\bC$, and not $\Vect_\bC$, is ``algebraically closed.''

\begin{remark}
  In positive characteristic, not all multifusion categories are separable (see Remark~\ref{remark.multifusion}). With the word ``separable'' added everywhere to the phrase ``symmetric multifusion category,'' Theorem~\ref{thm.Deligne} remains true in positive characteristic, and is due to~\cite{Ostrik2015}, and so a version of Theorem~\ref{thm.3+1d} below applies in all characteristics.
  Theorem~\ref{thm.Deligne} fails for inseparable fusion categories: counterexamples, and the corrected statement, are given in \cite{Ostrik2015}.
\end{remark}

Returning to our goal of classifying topological orders,
suppose that $\cA$ is a $(3{+}1)$-dimensional fermionic topological order. (We will handle the fermionic case first, because Theorem~\ref{thm.Deligne} is simpler in that case.) Then $\Omega^2\cA$ is a symmetric multifusion supercategory, and so the category of representations $\Spec(\Omega^2\cA) \to \SVec_\bC$ for some canonically-defined finite groupoid $\Spec(\Omega^2\cA)$. Choose any point $s : \pt \to \Spec(\Omega^2\cA)$. In the Tannakian language, $s$ is a choice of \define{fibre functor} $\Omega^2\cA \to \SVec_\bC$.
(Let us emphasize that a choice of $s$ is more than just a choice of its isomorphism class. Suppose for instance that $\cA$ is fusion. Then all fibre functors $s$ are isomorphic, but not canonically so: the ambiguity in choosing the isomorphism is precisely the group that Tannakian duality reconstructs in the fusion case.)

Using $s$, one may construct in $\cA$ a condensable algebra, just as in the $(1{+}1)$-dimensional case. Indeed, one step in the proof of Theorem~\ref{thm.Deligne} is to convert $s$ into an algebra object $\Delta_s \in \Omega^2\cA$ (compare also Corollary~7.10.5 of~\cite{EGNO} or Theorem~1 of~\cite{MR1976459} identifying module categories with categories of modules). This algebra object is in fact commutative, and so $\Sigma\Delta_s \in \Omega\cA$ is a braided algebra object, and $\Sigma^2\Delta_s \in \cA$ is an associative algebra object. The condensation of this algebra is a new $(3{+}1)$-dimensional topological order separated from $\cA$ by an interface, and so Morita-equivalent to $\cA$ (as shown in the proof of Theorem~\ref{thm.anomalousTQFT}). The new topological order has as its category of lines the category of $\Delta_s$-modules, which is precisely $\SVec_\bC$. Thus the new topological order is trivial.

It follows that $\cA = \End(\cM_s)$ for a $3$-supercategory $\cM_s$ produced from the condensation procedure. As in the $(1{+}1)$-dimensional case, this $3$-category can be described explicitly. It is ``graded'' by $\Spec(\Omega^2\cA)$ in the following sense. Given any $s' \in \Spec(\Omega^2\cA)$, one can write down a $3$-category $\cM_{s,s'}$ of interfaces between the $s$ and $s'$ local ground states. This $3$-category depends functorially in $s'$, i.e.\ it is a bundle of $3$-categories over $\Spec(\Omega^2\cA)$. The category $\cM_s$ is the ``direct sum'' of this bundle, defined as either a limit or colimit (the two being canonically equivalent; compare \cite{MR3221291}):
$$ \cM_s = \bigoplus_{s' \in \Spec(\Omega^2\cA)} \cM_{s,s'}.$$
Then comparing the decompositions
\begin{align*}
 \End(\cM_s) & = \bigoplus_{s',s'' \in \Spec(\Omega^2\cA)} \hom(\cM_{s,s'},\cM_{s,s''}), \\
 \cA & = \bigoplus_{s',s'' \in \Spec(\Omega^2\cA)} \cM_{s',s''},
\end{align*}
we learn that each $\cM_{s,s'}$ is an invertible $3$-supercategory.

Invertible $3$-supercategories, up to equivalence, are precisely the Morita equivalence classes (i.e.\ the gravitational anomalies) of $(2{+}1)$-dimensional fermionic topological orders. In \S\ref{subsec.2+1d} we saw that every such Morita equivalence class is represented by a braided fusion supercategory: as a set they form the fermionic Witt group $s\cW$, and as a higher groupoid they form the spectrum $\underline{s\cW}$ studied in Remark~\ref{remark.fermioniocWittgroup}. The upshot is that the assignment $s' \mapsto \cM_{s,s'}$ determines a class in $\H^4(\Spec(\Omega^2\cA); \underline{s\cW})$. This class depended on the choice of basepoint $s$, and so:

\begin{theorem}\label{thm.3+1d}
  A fermionic $(3{+}1)$-dimensional topological order $\cA$ is classified by a finite groupoid $\Spec(\Omega^2\cA)$ together with a reduced extraordinary cohomology class in $\widetilde{\H}{}^4(\Spec(\Omega^2\cA); \underline{s\cW})$. \qed
\end{theorem}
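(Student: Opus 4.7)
Essentially all ingredients are in place before the statement; the plan is to formalize the extraction of the classifying cocycle and to construct an inverse. Apply Theorem~\ref{thm.Deligne} to $\Omega^2\cA$ to canonically obtain the finite groupoid $X := \Spec(\Omega^2\cA)$. Choose a fibre functor $s \in X$; it corresponds by Tannakian duality to a commutative algebra $\Delta_s \in \Omega^2\cA$, and $\Sigma^2\Delta_s \in \cA$ is a condensable algebra. Condense: the 2-category of line operators of the resulting order is the category of $\Delta_s$-modules, i.e.\ $\SVec_\bC$, so by the higher analogue of Theorem~\ref{thm.nolines} indicated in Remark~\ref{remark.highergeneralizations} the condensed order is trivial. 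The condensation interface realizes a Morita equivalence $\cA \simeq \End(\cM_s)$ in $\cat{Mor}_1(3\cat{KarSCat}_\bC)$, and Tannakian naturality makes $\cM_s$ into a bundle $s' \mapsto \cM_{s,s'}$ of 3-supercategories over $X$.

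Compare the two $X \times X$-decompositions
\begin{align*}
\cA &= \bigoplus_{s',s''}\cM_{s',s''}, \\
\End(\cM_s) &= \bigoplus_{s',s''}\hom(\cM_{s,s'},\cM_{s,s''}).
\end{align*}
The first is manifestly invertible as a bimodule (since $\cA$ itself is Morita invertible), so matching entries forces each fibre $\cM_{s,s'}$ to be invertible in $3\cat{KarSCat}_\bC$. By Remark~\ref{remark.fermioniocWittgroup}, the assignment $s' \mapsto \cM_{s,s'}$ is then a map of $4$-groupoids from $X$ to $\underline{s\cW}$, i.e.\ a class in $\H^4(X;\underline{s\cW})$. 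Changing basepoint from $s$ to $s_0$ replaces $\cM_{s,s'}$ by $\cM_{s,s_0} \otimes \cM_{s_0,s'}$, shifting the cocycle by the pullback $\pi^*[\cM_{s,s_0}]$ along $\pi : X \to \pt$. Since $\H^5(\pt;\underline{s\cW}) = \pi_{-5}\underline{s\cW} = 0$, the definition of reduced cohomology collapses to a genuine quotient $\widetilde{\H}{}^4(X;\underline{s\cW}) = \H^4(X;\underline{s\cW})/\H^4(\pt;\underline{s\cW})$, and the class descends canonically.

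The inverse mirrors the construction in the proof of Theorem~\ref{thm.1+1d}. Any cocycle representative $\gamma$ of a class in $\widetilde{\H}{}^4(X;\underline{s\cW})$ is a functor from $X$ to $\underline{s\cW}$, giving a bundle of invertible 3-supercategories with total space $\cM := \bigoplus_{s'}\gamma(s')$; set $\cA := \End(\cM)$. The category $\Omega^2\cA$ is then the symmetric multifusion supercategory Tannakianly dual to $X$ (by construction of $\cM$ as an $X$-indexed bundle), and $\cZ(\cA)$ is trivial by Theorem~\ref{thm.invertibility} because $\cM$ has invertible fibres. Different cocycle representatives of the same reduced class differ by a constant twist, which does not affect $\End(\cM)$ up to canonical equivalence, so the inverse is well defined.

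The main obstacle is a higher coherence verification: in both directions one must promote a pointwise construction (a family of fibres over $X$, or a pointwise endomorphism category) into a structured map of 4-groupoids into $\underline{s\cW}$ compatible with Tannakian functoriality. This amounts to a $4$-categorical descent statement for topological orders along $\cA \mapsto \Omega^2\cA$, and relies on the foundational higher-categorical machinery that the paper (see the opening remarks of Section~\ref{sec.dualizability}) explicitly leaves to future work.
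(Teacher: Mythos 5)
Your proposal is correct and follows essentially the same route as the paper: Tannakian reconstruction of $\Spec(\Omega^2\cA)$, condensation of $\Sigma^2\Delta_s$ to trivialize the order, the matrix decomposition $\cA = \End(\cM_s) = \bigoplus \hom(\cM_{s,s'},\cM_{s,s''})$ forcing each fibre to be invertible, and the passage to reduced cohomology to absorb the basepoint dependence. The explicit inverse construction and the basepoint-change bookkeeping you supply are left implicit in the paper (which closes the theorem with the preceding discussion), and your observation that $\H^5(\pt;\underline{s\cW})=0$ makes the reduced group a genuine quotient is a valid, slightly different justification of the same fact.
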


Theorem~\ref{thm.3+1d} can be summarized by saying that $(3{+}1)$-dimensional topological orders are, canonically, \define{anomalous sigma models} with target finite groupoids. The sigma model built from a finite groupoid $\cX$ and a reduced extraordinary cohomology class in $\widetilde{\H}{}^4(\cX; \underline{s\cW})$ is ``anomalous'' in the sense that it depends only on a reduced cohomology class. In particular, if the target space $\cX$ is equipped with a $G$-symmetry, then $G$-equivariant anomalous sigma models are classified by reduced extraordinary $G$-equivariant cohomology $\widetilde{\H}{}^4_G(\cX; \underline{s\cW})$, and any class therein determines a (typically nonzero) \define{'t Hooft anomaly} living in the equivariant extraordinary cohomology $\H^5_G(\pt;\underline{s\cW})$.

An important special case is:

\begin{corollary}\label{cor.3+1nondegenerate}
  Fermionic $(3{+}1)$-dimensional topological order with nondenegerate local ground states are, canonically, gauge theories for finite groups. The gauge group is $\pi_1 \Spec(\Omega^2\cA)$, and the fermionic Dijkgraaf--Witten action is a class in $\SH^4(\rB G)$.
\end{corollary}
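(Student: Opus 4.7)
The plan is to specialize Theorem~\ref{thm.3+1d} to the nondegenerate case using Deligne's Tannakian theorem (Theorem~\ref{thm.Deligne}).

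First, unpack the nondegeneracy hypothesis. A local ground state of $\cA$ is an indecomposable idempotent in $\Omega^3\cA$, so nondegeneracy amounts to $\Omega^3\cA=\bC$. Theorem~\ref{thm.Deligne} then identifies $\pi_0\Spec(\Omega^2\cA)=\Spec(\Omega^3\cA)=\pt$, so $\Spec(\Omega^2\cA)$ is a connected finite groupoid. Any choice of fibre functor $s:\Omega^2\cA\to\SVec_\bC$ supplies a basepoint and an equivalence $\Spec(\Omega^2\cA)\simeq \rB G$, where $G:=\Aut(s)=\pi_1\Spec(\Omega^2\cA)$ is finite since $\Omega^2\cA$ is fusion.

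Next, apply Theorem~\ref{thm.3+1d}: $\cA$ is classified by a class in $\widetilde{\H}{}^4(\rB G;\underline{s\cW})$. By Remark~\ref{remark.fermioniocWittgroup}, the spectrum $\underline{s\cW}$ is an extension of the supercohomology spectrum $\SH$ by a degree-$4$ shift of $\mathrm{H}s\cW$; the associated fibre sequence $\Sigma^4\mathrm{H}s\cW\to\underline{s\cW}\to\SH$ induces a natural map $\widetilde{\H}{}^4(\rB G;\underline{s\cW})\to\SH^4(\rB G)$ whose image is the fermionic Dijkgraaf--Witten class $\alpha$.

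The hard part is showing this map is an isomorphism, so that $(G,\alpha)$ is a complete invariant of $\cA$. Injectivity is clean: the reduction operation $\widetilde{\H}{}^4=\H^4/\pi^*s\cW$ kills exactly the $\H^0(\rB G;s\cW)=s\cW$ contribution from the long exact sequence. Surjectivity reduces to vanishing of the connecting $k$-invariant $\SH^4(\rB G)\to\H^1(\rB G;s\cW)$, equivalently a splitting of the extension $\underline{s\cW}\to\SH$. I would handle this by explicit construction: for each $\alpha\in\SH^4(\rB G)$, build a fermionic Dijkgraaf--Witten topological order by endowing the trivial invertible $3$-supercategory $\Sigma^3\SVec_\bC$ with the $G$-action twisted by $\alpha$ and taking endomorphisms in the appropriate $\Mor_1$, thereby exhibiting $\alpha$ in the image of the classification map.
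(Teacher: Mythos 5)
Your proposal is correct in substance and, for the first half, coincides with the paper's (very short) proof: nondegeneracy of the local ground state means $\Omega^3\cA=\bC$, so by Theorem~\ref{thm.Deligne} the groupoid $\Spec(\Omega^2\cA)$ is connected and a choice of fibre functor identifies it with $\rB G$ for $G=\pi_1\Spec(\Omega^2\cA)$; then Theorem~\ref{thm.3+1d} hands you a class in $\widetilde{\H}{}^4(\rB G;\underline{s\cW})$. Where you diverge is in treating the identification $\widetilde{\H}{}^4(\rB G;\underline{s\cW})\cong\SH^4(\rB G)$ as the ``hard part.'' The paper disposes of it in one line, and rightly so: the surjectivity you propose to establish by constructing fermionic Dijkgraaf--Witten models is automatic for degree reasons. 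The fibre sequence presenting $\underline{s\cW}$ as a Postnikov extension of $\SH$ by the Eilenberg--MacLane spectrum on $\pi_{-4}=s\cW$ has connecting map $\SH^4(X)\to\H^1(X;s\cW)$ induced by the $k$-invariant, which is a stable operation of \emph{negative} degree out of the cells of $\SH$ (which sit in degrees $-2,-1,0$) into a spectrum concentrated in degree $-3$; all such operations vanish. Equivalently, in the Atiyah--Hirzebruch spectral sequence the $\pi_{-4}$-row contributes only to filtration $0$ in total degree $4$, no differential can leave $E^{0,4}$ (its $E_\infty$ term must contain the split image of restriction from a point), and the rows $q\le 3$ in total degrees $4$ and $5$ agree with those for $\SH$. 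So the map $\H^4(\rB G;\underline{s\cW})\to\SH^4(\rB G)$ is surjective with kernel exactly $\operatorname{im}\pi^*=s\cW$, and the quotient is $\SH^4(\rB G)$ with no further input. Two minor cautions: vanishing of the connecting map on $\SH^4(\rB G)$ is implied by, but not literally ``equivalent to,'' a splitting of the extension (though the extension does in fact split, by the same degree argument); and your explicit-construction sketch, while consistent in spirit with the twisted-action constructions appearing elsewhere in the paper (e.g.\ in the proof of Theorem~\ref{thm.1+1d}), is left at the level of a plan and would need the Morita-invertibility of the resulting object to be checked --- effort that the formal argument renders unnecessary.
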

\begin{proof}
  The topological order has a nondegenerate local ground state exactly when $\Spec(\Omega^2\cA)$ is connected, in which case it is the classifying groupoid of its fundamental group. In the connected case, $\widetilde{\H}{}^4(\Spec(\Omega^2\cA); \underline{s\cW}) = \SH^4(\Spec(\Omega^2\cA))$ because of the quotient by $\H^4(\pt; \underline{s\cW}) = s\cW$.
\end{proof}

Theorem~\ref{thm.3+1d} is true also in the presence of symmetry, just as in the $(1{+}1)$-dimensional case studied in Theorem~\ref{thm.1+1d}: $(3{+}1)$-dimensional topological orders with $G$-symmetry are classified by reduced $G$-equivariant cohomology with coefficients in $\underline{s\cW}$. Of particular importance is when the symmetry group is $\rB\bZ_2$, acting nontrivially on the coefficients $\underline{s\cW}$ as $(-1)^f$. A categorified version of Galois descent says that fermionic topological orders with this $\rB\bZ_2 = (-1)^f$ action are precisely bosonic topological orders.
(This ``categorified Galois descent'' perspective is due to \cite{MR3623677}.)
 Thus we find:

\begin{corollary}
  A bosonic $(3{+}1)$-dimensional topological order $\cA$ is classified by a finite groupoid $\Spec(\Omega^2\cA)$ together with an action of $\rB\bZ_2$ on $\Spec(\Omega^2\cA)$ and a class in reduced twisted equivariant extraordinary cohomology $\widetilde{\H}{}^4_{\rB\bZ_2}(\Spec(\Omega^2\cA); \underline{s\cW})$. \qed
\end{corollary}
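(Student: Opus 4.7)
The plan is to deduce this Corollary from Theorem~\ref{thm.3+1d} via the categorified Galois descent machinery from \cite{MR3623677}, treating a bosonic topological order as a fermionic one with an extra $\rB\bZ_2 = (-1)^f$ symmetry. Concretely, if $\cA$ is bosonic, then one may extend scalars by tensoring with $\SVec_\bC$: the assignment $\cA \leadsto \cA \boxtimes_{\Sigma^{n-1}\Vect_\bC} \Sigma^{n-1}\SVec_\bC$ produces a fermionic $(3{+}1)$-dimensional topological order carrying a canonical action of $\rB\bZ_2$ (acting on the $\SVec_\bC$ factor by $(-1)^f$), and the categorified Galois descent statement is that this functor realizes the $(\infty,1)$-category of bosonic $(3{+}1)$-dimensional topological orders as the homotopy fixed points of the $\rB\bZ_2$-action on fermionic ones.

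First I would identify the classification data at the level of $\Omega^2$. For the extended fermionic $\cA_f := \cA \boxtimes \Sigma\SVec_\bC$, Theorem~\ref{thm.Deligne} gives a finite groupoid $\Spec(\Omega^2\cA_f)$. Under $\rB\bZ_2$-descent, this groupoid acquires an $\rB\bZ_2$-action, and the bosonic symmetric multifusion category $\Omega^2\cA$ is recovered from the $\rB\bZ_2$-equivariant groupoid via the bosonic half of Theorem~\ref{thm.Deligne}. One then declares $\Spec(\Omega^2\cA) := \Spec(\Omega^2\cA_f)$ as an $\rB\bZ_2$-groupoid; this is essentially a repackaging.

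Next I would run the argument of Theorem~\ref{thm.3+1d} $\rB\bZ_2$-equivariantly. Choose a fibre functor $s$ on $\Omega^2\cA_f$; this produces a condensable algebra $\Sigma^2\Delta_s \in \cA_f$ whose condensation trivializes $\cA_f$, exhibiting $\cA_f \simeq \End(\cM_s)$ for a $3$-supercategory $\cM_s = \bigoplus_{s'} \cM_{s,s'}$ graded by $\Spec(\Omega^2\cA_f)$, with each $\cM_{s,s'}$ invertible. The $\rB\bZ_2$-action permutes these graded pieces compatibly with its action on the index groupoid, so the assignment $s' \mapsto \cM_{s,s'}$ defines an $\rB\bZ_2$-equivariant map of groupoids $\Spec(\Omega^2\cA) \to \underline{s\cW}$, where $\rB\bZ_2$ acts on $\underline{s\cW}$ by the $(-1)^f$ action described in Remark~\ref{remark.fermioniocWittgroup}. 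As in the proof of Theorem~\ref{thm.3+1d}, the dependence on the choice of basepoint $s$ is exactly that of reduction modulo $\H^4_{\rB\bZ_2}(\pt; \underline{s\cW})$, yielding a class in $\widetilde{\H}{}^4_{\rB\bZ_2}(\Spec(\Omega^2\cA); \underline{s\cW})$.

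Finally I would verify that this map from bosonic topological orders to the classification data is a bijection. Surjectivity is essentially the reverse construction: given an $\rB\bZ_2$-equivariant finite groupoid $\cX$ and a reduced twisted equivariant class, first build the fermionic topological order from Theorem~\ref{thm.3+1d} applied to the underlying non-equivariant data, then use the $\rB\bZ_2$-equivariance to equip it with a $(-1)^f$-action, then take Galois-descent invariants to obtain a bosonic topological order. Injectivity reduces to the $\rB\bZ_2$-equivariant version of Theorem~\ref{thm.3+1d}, which follows because all steps in that theorem (Tannakian reconstruction, condensation of $\Delta_s$, identification of $\cM_{s,s'}$ with classes in $\underline{s\cW}$) are natural, and so assemble into a statement about $\rB\bZ_2$-equivariant objects. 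The main obstacle, and the step requiring the most care, is verifying that the $\rB\bZ_2$-action on the fermionic Witt spectrum $\underline{s\cW}$ induced by this geometric setup agrees with the $(-1)^f$-action used in Remark~\ref{remark.fermioniocWittgroup} to compute $\underline{\cW}$; once this compatibility is checked, the Corollary becomes a formal consequence of the equivariant refinement of Theorem~\ref{thm.3+1d}.
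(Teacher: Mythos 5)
Your proposal is correct and follows essentially the same route as the paper, which derives the Corollary by combining the $G$-equivariant extension of Theorem~\ref{thm.3+1d} (asserted by analogy with Theorem~\ref{thm.1+1d}) with the categorified Galois descent of \cite{MR3623677} identifying bosonic topological orders with homotopy fixed points of the $\rB\bZ_2 = (-1)^f$ action on fermionic ones. Your write-up merely unpacks in more detail the same two ingredients (and correctly flags the compatibility of the $\rB\bZ_2$-action on $\underline{s\cW}$ with Remark~\ref{remark.fermioniocWittgroup} as the point needing care), which the paper leaves implicit.
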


  If there is a nondegenerate local ground state, so that $\Spec(\Omega^2\cA) = \rB G$ for a finite group $G$, then the classification restricts to a class in twisted equivariant supercohomology $\SH^4_{\rB\bZ_2}(\rB G)$, just as in Corollary~\ref{cor.3+1nondegenerate}. 
It is worth unpacking this a bit. An action of the $1$-form group $\rB\bZ_2$ on $\rB G$ is equivalent to the choice of a central order-$2$ element $\epsilon \in G$. The pair $(G,\epsilon)$ is sometimes called a ``finite supergroup,'' and shows up in the Tannakian description of bosonic symmetric fusion categories. In particular, as a bosonic symmetric fusion category, $\Omega^2\cA$ is precisely the category of super $G$-modules in which $\epsilon$ acts by $(-1)^f$.

When $\epsilon$ is the identity in $G$, so that the action of $\rB \bZ_2$ on $\rB G$ is trivial, then the twisted equivariant supercohomology $\SH^4_{\rB\bZ_2}(\rB G)$ reduces to the ordinary cohomology $\H^4(\rB G; \bC^\times)$, and we recover the classification of \cite{PhysRevX.8.021074}. Otherwise, the bosonic topological order has ``emergent fermions,'' and we recover the classification from \cite{PhysRevX.9.021005}.

\begin{remark}\label{remark.higherdimensions}
  One can try to extend this style of analysis to higher dimensions. For example, if $\cA$ is a $(5{+}1)$-dimensional topological order, then $\Omega^3\cA$ is a symmetric multifusion $2$-category. In unpublished work, M.\ Hopkins and the author have established the $2$-categorification of Theorem~\ref{thm.Deligne}: a  symmetric multifusion $2$-supercategory $\cC$ is determined by its spectrum $\Spec(\cC) = \hom(\cC, \Sigma\SVec_\bC)$, which is a finite $2$-groupoid. This implies that $(5{+}1)$-dimensional topological orders are ``anomalous sigma models'' with target a finite $2$-groupoid, but with a complicated cohomology theory classifying the sigma model action: whereas in Theorem~\ref{thm.3+1d} we needed the spectrum $\underline{s\cW}$ of Morita equivalence classes of topological orders of dimension $\leq (2{+}1)$, for the $(5{+}1)$-dimensional case we would need to understand invertible fermionic topological orders of dimension $\leq (4{+}1)$. 
  
  This is not hopeless. Using the methods above, one can reduce to the case of classifying, up to Morita equivalence, $(4{+}1)$-dimensional topological orders with neither point nor line operators, which is to say Morita-invertible $3$-monoidal $2$-categories. It is conceivable that a Morita-invertible $3$-monoidal $2$-category is necessarily ``abelian'' in the sense that its simple objects form an abelian group $X$. If $X$ has odd order, then the nondegenerate $3$-monoidal structure is precisely a symplectic form on $X$, and a choice of Lagrangian subgroup $L \subset X$ provides a Morita-trivialization of the corresponding topological order. Thus one can hope that the Morita-equivalence classes of $(4{+}1)$-dimensional topological orders will  form some group of order a small power of $2$. See \cite{2104.04534} for further details.
    
  In $(7{+}1)$-dimensions, one would need a $3$-categorification of Theorem~\ref{thm.Deligne}. There are counterexamples to the most obvious $3$-categorification, which would try to recover a symmetric multifusion $3$-supercategory from its $3$-groupoid of maps to $\Sigma^2\SVec_\bC$. These counterexamples are analogues of the fact that a symmetric multifusion $1$-category is not recoverable from its $1$-groupoid of maps to $\Vect_\bC = \Sigma\bC$, 
  but rather one needs to extend to $\SVec_\bC$.
   The correct $3$-categorification of Theorem~\ref{thm.Deligne} remains unclear.
\end{remark}


\newcommand{\etalchar}[1]{$^{#1}$}

\end{document}